\begin{document}

\newtheorem{theorem}{Theorem}[section]
\newtheorem*{theorem*}{Theorem}
\newtheorem{lemma}[theorem]{Lemma}
\newtheorem{corollary}[theorem]{Corollary}
\newtheorem{conjecture}[theorem]{Conjecture}
\newtheorem{cor}[theorem]{Corollary}
\newtheorem{proposition}[theorem]{Proposition}
\theoremstyle{definition}
\newtheorem{definition}[theorem]{Definition}
\newtheorem{example}[theorem]{Example}
\newtheorem{claim}[theorem]{Claim}
\newtheorem{remark}[theorem]{Remark}

\newenvironment{pfofthm}[1]
{\par\vskip2\parsep\noindent{\sc Proof of\ #1. }}{{\hfill
$\Box$}
\par\vskip2\parsep}
\newenvironment{pfoflem}[1]
{\par\vskip2\parsep\noindent{\sc Proof of Lemma\ #1. }}{{\hfill
$\Box$}
\par\vskip2\parsep}


\newcommand{\R}{\mathbb{R}}
\newcommand{\T}{\mathcal{T}}
\newcommand{\U}{\mathcal{U}}
\newcommand{\Z}{\mathbb{Z}}
\newcommand{\Q}{\mathbb{Q}}
\newcommand{\E}{\mathbb E}
\newcommand{\N}{\mathbb N}
\newcommand{\config}{{\bf C}}
\newcommand{\barray}{\begin{eqnarray*}}
\newcommand{\earray}{\end{eqnarray*}}

\newcommand{\beq}{\begin{equation}}
\newcommand{\eeq}{\end{equation}}


\newcommand{\Prob}{\mathbb{P}}
\newcommand{\Exp}{\mathbb{E}}
\newcommand{\expect}{\mathbb{E}}
\newcommand{\1}{\mathbf{1}}
\newcommand{\prob}{\mathbb{P}}
\newcommand{\pr}{\mathbb P}
\newcommand{\filt}{\mathscr{F}}
\DeclareDocumentCommand \one { o }
{%
\IfNoValueTF {#1}
{\mathbf{1}  }
{\mathbf{1}\left\{ {#1} \right\} }%
}
\newcommand{\Bernoulli}{\operatorname{Bernoulli}}
\newcommand{\Binomial}{\operatorname{Binom}}
\newcommand{\Binom}{\Binomial}
\newcommand{\Poisson}{\operatorname{Poisson}}
\newcommand{\Exponential}{\operatorname{Exp}}
\newcommand{\weakto}{\Rightarrow}


\newcommand{\link}{\mbox{lk}}
\newcommand{\Deg}{\operatorname{deg}}
\newcommand{\vertexsetof}{\mathcal{V}}
\renewcommand{\deg}{\Deg}
\newcommand{\oneE}[2]{\mathbf{1}_{#1 \leftrightarrow #2}}
\newcommand{\ebetween}[2]{{#1} \leftrightarrow {#2}}
\newcommand{\noebetween}[2]{{#1} \centernot{\leftrightarrow} {#2}}
\newcommand{\Gap}{\ensuremath{\tilde \lambda_2 \vee |\tilde \lambda_n|}}
\newcommand{\dset}[2]{\ensuremath{ e({#1},{#2})}}
\newcommand{\EL}{{ L}}
\newcommand{\ER}{{Erd\H{o}s--R\'{e}nyi }}
\newcommand{\zuk}{{\.{Z}uk}}
\newcommand{\BS}{{Ballman--\'Swi\k{a}tkowski }}


\newcommand{\bdc}{\ensuremath{ \bf{b.d.c.}}}
\DeclareDocumentCommand \fuzz { o o }
{
\IfNoValueTF {#1}
  { \aleph_M }
  { \IfNoValueTF { #2 }
    { \aleph_{M}^{{#1}} }
    { \aleph_{M}^{{#1}}({#2})}
  }
}
\newcommand{\csubzero}{c_{0}}
\newcommand{\csubone}{c_{1}}
\newcommand{\csubtwo}{c_{2}}
\newcommand{\csubthree}{c_{3}}
\newcommand{\csubstar}{c_{*}}
\newcommand{\rsp}{1-C\exp(-md^{1/4}\log n)}
\newcommand{\lc}{\ensuremath{ \operatorname{light}(x,y)}}
\newcommand{\hc}{\ensuremath{ \operatorname{heavy}(x,y)}}
\DeclareDocumentCommand \LMP { O{t} O{d} }
{
Y^{{#2}}_{{#1}}(n)
}
\DeclareDocumentCommand \tLMP { O{t} O{d} }
{
\tilde{Y}^{{#2}}_{{#1}}(n)
}
\newcommand{\cd}{\operatorname{cd}}
\newtheorem*{LMW}{Linial--Meshulam--Wallach theorem}
\newtheorem*{Zuk}{\.Zuk's criterion}
\newtheorem*{LM}{Linial--Meshulam theorem}
\newtheorem*{BSC}{\BS criterion}

\title[Spectral gaps of random graphs]{Spectral gaps of random graphs and applications}
\author{Christopher Hoffman}
\address{Department of Mathematics, University of Washington}
\email{hoffman@math.washington.edu}
\author{Matthew Kahle}
\address{Department of Mathematics, The Ohio State University}
\email{mkahle@math.osu.edu}
\author{Elliot Paquette}
\address{Department of Mathematics, The Ohio State University}
\email{paquette.30@osu.edu}
\thanks{The first author was supported by an AMS Centennial Fellowship and NSF grant DMS-0806024. The second author was supported by NSA grant \# H98230-10-1-0227, DARPA grant \#N66001-12-1-4226, NSF grant \#DMS-1352386, and an Alfred P.\ Sloan research fellowship. The third author was supported by NSF grants DMS-0847661 and DMS-0806024 and NSF postdoctoral fellowship DMS-1304057.}
\date{\today}
\maketitle


\begin{abstract}
We study the spectral gap of the  \ER random graph through the connectivity threshold. In particular, we show that
for any fixed $\delta > 0$
if $$p \ge \frac{(1/2 + \delta) \log n}{n},$$
then the normalized graph Laplacian of an \ER graph has all of its nonzero eigenvalues tightly concentrated around $1$. 
This is a strong expander property.  

We estimate both the decay rate of the spectral gap to $1$ and the failure probability, up to a constant factor. We also show that the $1/2$ in the above is optimal, and that if $p = \frac{c \log n}{n}$ for $c < 1/2,$ then there are eigenvalues of the Laplacian restricted to the giant component that are separated from $1.$

We then describe several applications of our spectral gap results to stochastic topology and geometric group theory. These all depend on Garland's method \cite{Garland}, a kind of spectral geometry for simplicial complexes. The following can all be considered to be higher-dimensional expander properties.

First, we exhibit a sharp threshold for the fundamental group of the Bernoulli random $2$-complex to have Kazhdan's property (T). We also obtain slightly more information and can describe the large-scale structure of the group just before the (T) threshold. In this regime, the random fundamental group is with high probability the free product of a (T) group with a free group, where the free group has one generator for every isolated edge. The (T) group plays a role analogous to that of a ``giant component'' in percolation theory.

Next, we give a new, short, self-contained proof of the Linial--Meshulam--Wallach theorem \cite{LM06,MW09}, identifying the cohomology-vanishing threshold of Bernoulli random $d$-complexes. Since we use spectral techniques, it only holds for $\Q$ or $\R$ coefficients rather than finite field coefficients, as in \cite{LM06} and \cite{MW09}. But it is sharp from a probabilistic point of view, providing for example, hitting time type results and limiting Poisson distributions inside the critical window. It is also a new method of proof, circumventing the combinatorial complications of cocycle counting. Similarly, results in an earlier preprint version of this article were already applied in \cite{flag} to obtain sharp cohomology-vanishing thresholds in every dimension for the random flag complex model.
%
\end{abstract}

\section{Introduction}

Studying the spectral properties of random matrices has played a central role in probability theory ever since Wigner's paper establishing the semi-circular law for symmetric matrices with independent entries of equal variance~\cite{wigner}.  The theory of these matrices is rich and well-developed, and its techniques and theorems provide great insight into the adjacency matrices of random graphs.

In this paper we study the normalized Laplacian matrix of a Bernoulli (also \ER) random graph $G(n,p),$ which has $n$ vertices and whose every edge is included independently with probability $p=p(n).$  For a connected graph $G,$ the normalized Laplacian has smallest eigenvalue $\lambda_1=0,$ and the remainder of its eigenvalues $\{\lambda_i\}_{i=2}^n$ lie in the interval $0 < \lambda_i \le 2$.  The spectral gap, $\lambda_2,$ is the principal quantity of interest in many applications, and it has received much attention in the literature~\cite{Chung,ChungLuVu,ChungRadcliffe,CojaOghlan}.  

Our focus is on typical behavior of random graphs for large values of $n.$  So, we will use the terminology \emph{with high probability} (abbreviated w.h.p.) as a qualifier for a statement holds with probability tending to $1$ as $n$ tends to infinity.  We will also use the expression \emph{with overwhelming probability}, meaning the statement holds with failure probability smaller than $O(n^{-C})$ for all $C>0.$  

We will make use of the Landau notations $O, o, \omega, \Omega, \Theta$ in the asymptotic sense, so that $f=O(g)$ means $f/g$ is eventually bounded above as $n \to \infty$ and $f=o(g)$ means $f/g$ tends to $0$ as $n \to \infty.$  Also, $f=\omega(g)$ means $g=o(f)$ and $f=\Omega(g)$ means $g=O(f).$ Finally, we will use $f=\Theta(g)$ to mean $f=O(g)$ and $f=\Omega(g).$

We will also make use of the notion of \emph{thresholds}.  A function $f=f(n)$ is said to be a threshold for a property $\mathcal{P}$ if $p = \omega(f)$ implies $G \in \mathcal{P}$ w.h.p.\ and $p=o(f)$ implies $G \not\in \mathcal{P}$ w.h.p.\  Such a threshold is only defined up to $n$--independent scalar multiples.  If there is a function $g=o(f)$ so that $p \geq f + g$ implies $G \in \mathcal{P}$ w.h.p.\ and $p \leq f - g$ implies $G \not\in \mathcal{P}$ w.h.p.\ the threshold is \emph{sharp}.  If no such $g$ exists, the threshold is \emph{coarse}.  

A fundamental result of random graph theory is that every nontrivial monotone property has a threshold \cite{friedgutkalai}, which need not be sharp.  For example, the appearance of triangles in $G(n,p)$ has the threshold $1/n,$ which is coarse.  On the other hand, the \ER theorem shows that $\log n / n$ is the sharp threshold for connectivity of the graph.  Similarily, we will need that $\tfrac{1}{2} \log n / n$ is the sharp threshold for the graph to consist only of one \emph{giant component} $\tilde G$ and isolated vertices, which is an easy extension of the \ER theorem.  We will use $\tilde G$ to denote the largest connected component of $G(n,p),$ which is well--defined w.h.p.\ for $p = \omega(1/n)$ (see \cite{JKLP} for a detailed discussion or Lemma \ref{lem:giant_component_size}).

For the \ER graph, as we shall show, the eigenvalues $\{\lambda_i\}_{i=2}^n$ tend to cluster around $1,$ and hence we define $\lambda(G)=\max_{i \neq 1}|1-\lambda_i|.$  The quantity $1-\lambda(G)$ is sometimes referred to as the \emph{absolute gap.}
The methods in the previous papers are successful in establishing the correct order for $\lambda(G)$ of $C(np)^{-1/2}$ when the density of edges is sufficiently large,
but they do not extend to $p$ very near the connectivity threshold $\log n / n$.  

Our main result on spectral gaps are contained in the following two theorems.  
\begin{theorem}
\label{thm:ergap_giant}
Fix $\delta > 0$ and let
\(p \geq (\frac12+\delta) \log n / n\).
Let $d=p(n-1)$ denote the expected degree of a vertex.
For every fixed $\epsilon > 0,$
there is a constant $C=C(\delta, \epsilon),$
so that
\[
\lambda(\tilde G)< \frac{C}{\sqrt{d}}.
\]
with probability at least $1-Cn\exp(-(2-\epsilon)d)-C\exp(-d^{1/4}\log n).$
\end{theorem}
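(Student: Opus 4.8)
The plan is to recast the eigenvalue bound as an operator-norm estimate, pass to a high-probability structural event that tames the (plentiful) low-degree vertices, and run a Kahn--Szemer\'edi type light/heavy couple argument for the normalized adjacency operator. Write $\mathcal{L} = I - M$ with $M = D^{-1/2}AD^{-1/2}$ the normalized adjacency operator of $\tilde G$, where $A,D$ are its adjacency and degree matrices. Since $\tilde G$ is connected, $M$ has simple top eigenvalue $1$ with Perron eigenvector $v = D^{1/2}\mathbf 1$, so
\[
\lambda(\tilde G) \;=\; \bigl\lVert M - \bar v\,\bar v^{\top}\bigr\rVert \;=\; \sup\bigl\{\,\lvert\langle f, Mf\rangle\rvert : \lVert f\rVert = 1,\ f \perp \bar v\,\bigr\}, \qquad \bar v = v/\lVert v\rVert ,
\]
and the goal is to bound this by $C/\sqrt d$. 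Here $\langle f, Mf\rangle = \sum_{x\sim y} 2 f_x f_y/\sqrt{d_x d_y}$; the ``mean-field'' part obtained by replacing $A$ with $p(\mathbf 1\mathbf 1^{\top} - I)$ is nearly parallel to $\bar v$ and so contributes only $O(1/d)$ after $\bar v$ is projected out (using that the degree sequence is concentrated), so the content is in the fluctuation $\sum_{x\sim y} 2(A_{xy}-p) f_x f_y/\sqrt{d_x d_y}$.

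I would then condition on a structural event $\mathcal E$ of the stated probability. On $\mathcal E$: (i) $\tilde G$ equals $G(n,p)$ with its isolated vertices deleted --- for $\delta > 0$ there are whp no surviving components of size between $2$ and $n/2$, and the first-moment cost of killing a size-$2$ component (an isolated edge) is already $\asymp nd\,e^{-2d} \le n\,e^{-(2-\epsilon)d}$ --- so $\lvert V(\tilde G)\rvert = (1-o(1))n$; (ii) the degree sequence is well concentrated and $\Delta(\tilde G) \le C_0 d$; and (iii) a local-density / bounded-discrepancy property, strong enough to drive the heavy-couple bound, whose binding instance is again at scale $2$: there is no edge $x\sim y$ with $d_x, d_y$ both below $c\sqrt d$, since forcing both endpoints of a fixed edge to be that degree-deficient costs $e^{-(2-o(1))d}$, which weighed against $\binom n2 p$ candidate edges again yields the $Cn\,e^{-(2-\epsilon)d}$ term, while the companion discrepancy estimates on vertex sets of size up to $\asymp d^{1/4}$ cost the $C\,e^{-d^{1/4}\log n}$ term.

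On $\mathcal E$ I would bound the fluctuation in Kahn--Szemer\'edi fashion: pass to a finite net of the unit sphere, and for each net vector $f$ split the couples $(x,y)$ according to whether the weight $f_x f_y/\sqrt{d_x d_y}$ lies below a threshold $\tau$ (\emph{light}) or above it (\emph{heavy}). The light sum is a sum of centered, bounded, essentially independent terms of small total variance --- one first conditions on the degree sequence to freeze the denominators --- so Bernstein's inequality plus a union bound over the net controls it by $C/\sqrt d$, with failure probability $e^{-\Omega(n)}$. The heavy sum is the crux: a couple is heavy precisely when $\lvert f_x\rvert,\lvert f_y\rvert$ are atypically large \emph{or} $d_x, d_y$ atypically small, so one discretizes $f$ into level sets $V_{i,\ell}$ refined by degree, reduces $\sum_{\text{heavy}}\lvert f_x f_y\rvert/\sqrt{d_x d_y}$ to a weighted sum of edge counts $e(V_{i,\ell}, V_{j,m})$ between small vertex classes, and bounds each by (iii). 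A single pendant vertex attached to a vertex of degree $\ge (1-\gamma)d$ contributes at most $1/\sqrt{(1-\gamma)d}$ by itself, and (iii) is exactly what rules out the dangerous alternatives --- two adjacent low-degree vertices, or several pendants at a common low-degree vertex --- which is precisely why the threshold sits at $\tfrac12\log n/n$: below it such configurations survive.

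The hard part is the heavy-couple estimate. The naive triangle-inequality bound on $\sum \lvert f_x f_y\rvert/\sqrt{d_x d_y}$ overshoots by a factor of order $d$ because it discards the cancellation in $A - p(\mathbf 1\mathbf 1^{\top}-I)$ and in the orthogonality to $\bar v$, so one genuinely needs the discrepancy input (iii), and must extract from it a bound of the exact order $1/\sqrt d$ uniformly over all degree-refined level sets, including those supported on vertices of degree as small as $1$; the dependence of the denominators $\sqrt{d_x d_y}$ on the graph is a further bookkeeping nuisance to be handled by conditioning on the degree sequence. Obtaining the sharp constant $2-\epsilon$ in the failure probability, rather than something like $1$, is what forces the scale-$2$ parts of (i) and (iii) to be done by careful first-moment computations rather than crude union bounds.
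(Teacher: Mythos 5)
Your overall strategy --- tame the low-degree vertices by a structural event whose cost is $n e^{-(2-\epsilon)d}$, and handle the bulk by a Kahn--Szemer\'edi net argument --- is the right one, but there are genuine gaps exactly where your route diverges from what closes the argument. The central one: you propose to run the light/heavy analysis directly on the normalized operator $M=D^{-1/2}AD^{-1/2}$, and you yourself flag the heavy-couple estimate ``uniformly over all degree-refined level sets, including those supported on vertices of degree as small as $1$'' as the hard part, without carrying it out. The paper never attempts this. It applies Kahn--Szemer\'edi only to the unnormalized adjacency matrix, obtaining $|x^tAy|\le C\sqrt d$ for $x\perp\one$ (Proposition~\ref{prop:adj}), and then transfers to the normalized Laplacian by a purely \emph{deterministic} decomposition (Lemma~\ref{lem:deterministic}): write $T^{-1/2}x=u+v$ with $v$ supported on the low-degree set $\fuzz$, split $u$ into its component along $\one_{\fuzz^c}$ and the rest, and bound the four resulting quadratic-form terms separately. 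This removes the random denominators from the concentration step entirely; your alternative of ``conditioning on the degree sequence to freeze the denominators'' destroys the edge independence that the Bernstein/martingale bound for the light couples relies on, so that step does not go through as written.

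Two further concrete problems. First, your event $\mathcal E$ is missing a condition the argument needs: not only must there be no edge between two low-degree vertices, but no vertex outside the low-degree set may have \emph{two} low-degree neighbors (Proposition~\ref{prop:key_structure}(3), $\max_{u\in\fuzz^c}\dset{u}{\fuzz}\le 1$). This is what makes the cross term between the low-degree block and the rest bounded by $\|u\|=O(d^{-1/2})$ after Cauchy--Schwarz; your ``several pendants at a common low-degree vertex'' is not the dangerous configuration --- the dangerous one is two low-degree vertices sharing a common high-degree neighbor, and it too costs $e^{-(2-\epsilon)d}$ per triple. Second, your claim that the mean-field part contributes $O(1/d)$ after projecting out $\bar v$ fails if you only know $|\fuzz|=O(n/d)$: the vector $D^{-1/2}\one$ has entries of order $1$ on degree-one vertices, so the fuzz contribution to $p\,(f^tD^{-1/2}\one)^2$ can be of order $p\,|\fuzz|$, which is a constant. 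The paper's parallel-eigenspaces condition is accordingly stated for $\one_{\fuzz^c}$ only (Lemma~\ref{lem:T_flattens}), with the fuzz contribution absorbed into the other terms of the decomposition; you would need either that restriction or a much sharper high-probability bound on the number of low-degree vertices.
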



This result improves on a number of previous results. These earlier results are discussed in more detail in Section \ref{sec:rmt_background}. In brief, the state of the art is due to Coja--Oghlan \cite{CojaOghlan} who obtains gap $1-O(d^{-1/2})$ for $p \ge C \log n / n$, where $C> 0$ is a sufficiently large constant.
We are able to extend this to $C=1,$ and appropriately modifying the statement for the giant component, we extend this to $C = \tfrac12$.

We note that Theorem \ref{thm:ergap_giant} is vacuous for $p \leq \frac{1}{2} \log n / n.$
Indeed, the next result shows that for smaller values of $p,$ the gap is no longer
$1-o(1).$
\begin{theorem}
\label{thm:ergap_giant2}
For $p$ satisfying $p = \omega(\sqrt{\log n}/n)$ and $p \leq \tfrac 12 \log n / n$
\[
\lambda(\tilde G) \geq \tfrac 12,
\]
with high probability.
\end{theorem}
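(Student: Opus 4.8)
The plan is to exhibit, with high probability, an eigenvalue $\lambda_i$ of the normalized Laplacian of $\tilde G$ that lies outside $(1/2, 3/2)$, which forces $\lambda(\tilde G) = \max_{i\ne 1}|1-\lambda_i| \ge 1/2$. The natural source of such an eigenvalue is a vertex of degree $1$ in the giant component: if $v$ has degree $1$ with unique neighbour $u$, then localizing a test vector near $v$ produces a Rayleigh quotient far from $1$. Concretely, for the normalized Laplacian $\mathcal L = I - D^{-1/2} A D^{-1/2}$, a pendant vertex $v$ attached to $u$ contributes (via the standard $2\times 2$ analysis of the edge $\{u,v\}$, or via a test function supported on $\{u,v\}$) a Rayleigh quotient of the form $1 \pm (\deg u)^{-1/2}(1 + o(1))$; since $\deg u$ concentrates near $d = \omega(\sqrt{\log n})\to\infty$, this is $1 + o(1)$, which is \emph{not} bounded away from $1$ — so a single pendant vertex is not enough, and one must instead use a \emph{path} of low-degree vertices or a pair of pendant vertices at the same neighbour.

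So the refined plan is: first show that with high probability $\tilde G$ contains two vertices $v_1, v_2$, each of degree $1$, sharing the same neighbour $u$ (a "cherry" hanging off the giant component). A first-moment/second-moment computation gives the expected number of such configurations inside the giant as roughly $\binom n2 \cdot (\text{prob. } u \sim v_1, u\sim v_2) \cdot (\text{prob. } v_1,v_2 \text{ otherwise isolated}) \approx n^3 p^2 (1-p)^{2n} \approx n^3 (\log n/n)^2 e^{-\log n} = n^2 (\log n)^2 e^{-o(\cdot)}$ when $p \le \tfrac12\log n/n$ — this diverges (the exponent $2np \le \log n$ is the key point, exactly where the $\tfrac12$ threshold enters), and a second-moment argument shows concentration, so such a cherry exists w.h.p.\ and, being attached to a giant-component vertex $u$ of degree $\to\infty$, lies in $\tilde G$. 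Then the subspace spanned by test vectors supported on $\{v_1, u\}$ and $\{v_2, u\}$ — more cleanly, the vector supported on $v_1, v_2$ alone with values $D^{-1/2}$-scaled to be orthogonal to the Perron vector — is an eigenvector-like object: in fact $e_{v_1}, e_{v_2}$ span an invariant-up-to-$u$ subspace, and diagonalizing $\mathcal L$ restricted to $\mathrm{span}\{e_{v_1}, e_{v_2}\}$ one finds an eigenvalue exactly equal to $1$ (the antisymmetric combination $e_{v_1} - e_{v_2}$, which is killed by $D^{-1/2}AD^{-1/2}$ since both map only to $u$) — hmm, that gives $\lambda = 1$, still not enough.

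Let me instead use the correct configuration: a \emph{bare path} $v_0 - v_1 - v_2$ dangling from the giant, where $v_1$ has degree exactly $2$ and $v_0$ has degree exactly $1$; equivalently, the cleanest is a pendant \emph{path of length $2$}. Restricting $\mathcal L$ to the span of $e_{v_0}, e_{v_1}$ and using that $\deg v_0 = 1$, $\deg v_1 = 2$, the relevant block is $\begin{pmatrix} 1 & -1/\sqrt2 \\ -1/\sqrt2 & 1\end{pmatrix}$ (up to the coupling to $v_2$, which perturbs by $O(d^{-1/2})$), whose eigenvalues are $1 \pm 1/\sqrt 2$; the smaller one, $1 - 1/\sqrt2 < 1/2$, is the winner. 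By Cauchy interlacing / the Rayleigh principle applied to the test vector realizing this block eigenvector, $\tilde G$ has an eigenvalue below $1/2 + o(1)$, hence $\lambda(\tilde G) \ge 1/2 - o(1)$; absorbing the $o(1)$ (or stating the bound as $\ge 1/2 - \epsilon$, or re-running with a longer pendant path to get arbitrarily close to $1$) yields the claim. The main obstacle is the combinatorial existence step: one must verify via first and second moments that such a pendant path of the right length occurs w.h.p.\ for \emph{all} $p$ in the stated window $\omega(\sqrt{\log n}/n) \le p \le \tfrac12\log n/n$, uniformly — the lower bound $p = \omega(\sqrt{\log n}/n)$ is what makes $d\to\infty$ (so $u$'s degree is genuinely large and the perturbation $O(d^{-1/2})$ is negligible) while $p \le \tfrac12 \log n/n$ is exactly the regime where the expected count of such configurations still tends to infinity; and one must confirm the configuration attaches to the giant rather than to a small component, which follows since its anchor vertex has degree $\Theta(d) = \omega(\sqrt{\log n})$ and such vertices lie in the giant w.h.p.
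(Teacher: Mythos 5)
Your gadget and the paper's are close cousins, and the overall strategy is the same: the paper (Proposition~\ref{prop:handles} combined with Lemma~\ref{lem:deterministic2}) finds an induced path $u-v-w-x$ in the giant component with $\deg v=\deg w=2$ and $\deg u,\deg x\ge np/2$, whereas you use a pendant path $v_2-v_1-v_0$ with $\deg v_1=2$ and $\deg v_0=1$. In both cases the existence step is a first/second moment computation in which the factor $(1-p)^{2n}\approx e^{-2np}\ge n^{-1}$ at $p\le\tfrac12\log n/n$ is exactly what keeps the expected count divergent, plus the observation that the high-degree anchor forces the configuration into the giant component. For that last step the paper proves a uniform statement (Lemma~\ref{lem:dust_degree_bound}: w.h.p.\ \emph{no} vertex outside the giant has degree $\ge np/100$), and this union bound, costing $n\exp(-c(np)^2)$, is the real reason for the hypothesis $p=\omega(\sqrt{\log n}/n)$ --- not merely that $d\to\infty$, as you suggest.

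Two points in your spectral step need repair. First, the eigenvector of your $2\times2$ block for the eigenvalue $1-1/\sqrt2$, namely $(e_{v_0}+e_{v_1})/\sqrt2$, is \emph{not} orthogonal to the kernel vector $T^{1/2}\one$ (the inner product is $(1+\sqrt2)/\sqrt2$), so the Rayleigh principle does not directly bound $\lambda_2$; and Cauchy interlacing applied to the lower block eigenvalue only bounds $\lambda_1=0$ from above, which is vacuous. Either orthogonalize against the kernel (the correction has norm $O((nd)^{-1/2})$ and is harmless), or --- cleaner --- use the \emph{upper} block eigenvalue $1+1/\sqrt2$: interlacing for principal submatrices gives $\lambda_{|\tilde G|}\ge 1+1/\sqrt2$ with no orthogonality requirement and no error term. (The paper's analogue is the vector $\delta_v-\delta_w$, which is exactly orthogonal to $T^{1/2}\one$ precisely because $\deg v=\deg w$, and which yields $\lambda_{|\tilde G|}\ge 3/2$ exactly; this is why the theorem can assert the clean bound $\ge\tfrac12$ rather than $\tfrac12-o(1)$.) Second, your closing inference ``eigenvalue below $1/2+o(1)$, hence $\lambda(\tilde G)\ge 1/2-o(1)$'' both discards the strength of your own computation and falls short of the stated bound: the eigenvalue you produce is $\le 1-1/\sqrt2+o(1)$ (respectively $\ge 1+1/\sqrt2$), so $|1-\lambda_i|\ge 1/\sqrt2-o(1)>\tfrac12$ for large $n$, and no absorbing of error terms or longer pendant paths is needed. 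With these repairs your argument is correct and in fact delivers the stronger constant $1/\sqrt2$.
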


For $p = O( \sqrt{\log n}/n),$ Fountoulakis and Reed~\cite{FR} show that the mixing time is large, and hence provide a lower bound for $\lambda(\tilde G)$ in this regime.
So $G(n,p)$ has $\lambda(\tilde G)$ bounded away from $0$, but at $ \frac{1}{2} \log n / n$ there is a phase transition, and at this point $\lambda(\tilde G) = o(1)$.  
We in fact prove a slightly stronger result than Theorem~\ref{thm:ergap_giant2} in Section~\ref{sec:pbnds} (c.f.\ Lemma~\ref{lem:deterministic2}).

We also consider an \ER process version (see Section \ref{sec:proc} for definitions) of the spectral gap theorem. In particular, we show that if random edges are added one at a time, at the moment of connectivity the random graph already has spectral gap $1-o(1)$. More precisely, we have the following.

\begin{theorem} \label{thm:erstop}
Let $\tau_c$ be the connection time for the \ER graph process $G(n,m).$  Then there is a constant $C$ so that with high probability
\[
\lambda(G(n,\tau_c)) \leq C/\sqrt{\log n}.
\]
\end{theorem}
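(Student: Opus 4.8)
The plan is to reduce this statement about the \ER process $G(n,m)$ at the connection time $\tau_c$ to the static result of Theorem \ref{thm:ergap_giant}, exploiting the well-known fact that $\tau_c$ is tightly concentrated around $m^* := \tfrac{1}{2} n \log n$, equivalently that the connection threshold occurs at edge-probability $p^* \sim \log n / n$. First I would recall the classical hitting-time description: with high probability, the \ER process becomes connected at exactly the moment the last isolated vertex acquires an edge, and $\tau_c = \tfrac12 n \log n + O(n \log\log n)$ w.h.p. In particular, for any fixed $\epsilon>0$, w.h.p.\ we have $(1-\epsilon) m^* \le \tau_c \le (1+\epsilon) m^*$, and the corresponding edge-density $p = \tau_c/\binom{n}{2}$ satisfies $p \ge (1-\epsilon)\log n/n$, which for small $\epsilon$ is comfortably above the $(\tfrac12+\delta)\log n/n$ threshold. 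Since at this density the graph is w.h.p.\ connected, the giant component $\tilde G$ is all of $G(n,\tau_c)$, so Theorem \ref{thm:ergap_giant} applied with $d = p(n-1) \ge (1-\epsilon)\log n$ gives $\lambda(G(n,\tau_c)) < C/\sqrt{d} \le C'/\sqrt{\log n}$, which is the desired bound.

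The one subtlety — and the main obstacle — is that $\tau_c$ is a random time, so one cannot simply invoke Theorem \ref{thm:ergap_giant} at a single deterministic value of $p$; the spectral gap must be controlled \emph{simultaneously} over the whole window of values $m$ that $\tau_c$ can plausibly take. To handle this I would use a monotonicity / sandwiching argument. The key point is that adding edges to an already-connected graph can only help connectivity, but the normalized Laplacian spectrum is not monotone under edge addition, so one cannot argue purely by coupling. Instead I would fix deterministic times $m_- = \lceil(1-\epsilon)m^*\rceil$ and $m_+ = \lfloor(1+\epsilon)m^*\rfloor$, note that w.h.p.\ $m_- \le \tau_c \le m_+$, and then show that w.h.p.\ \emph{every} graph $G(n,m)$ with $m_- \le m \le m_+$ has $\lambda(G(n,m)) \le C/\sqrt{\log n}$. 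This is a union bound over the $O(n\log n)$ intermediate values of $m$, and it works because the failure probability in Theorem \ref{thm:ergap_giant} at density $p\ge(1-\epsilon)\log n/n$, namely $Cn\exp(-(2-\epsilon')d) + C\exp(-d^{1/4}\log n)$ with $d\ge(1-\epsilon)\log n$, is $n\cdot\exp(-(2-\epsilon')(1-\epsilon)\log n) + \exp(-(\log n)^{5/4}\cdot(1-\epsilon)^{1/4})$; choosing $\epsilon,\epsilon'$ small enough the first term is $n^{-1-\Omega(1)}$, so even after multiplying by $O(n\log n)$ the total failure probability over the whole window tends to $0$. (One should translate between the $G(n,p)$ and $G(n,m)$ models here; the standard comparison inequalities, or simply conditioning $G(n,m)$ inside a sandwich $G(n,p_-)\subseteq G(n,m)\subseteq G(n,p_+)$ of binomial graphs, suffice, though again care is needed because the spectral bound itself is applied to each $G(n,m)$ directly rather than transported through the coupling.)

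Putting the pieces together: on the high-probability event that $m_- \le \tau_c \le m_+$ intersected with the high-probability event that $\lambda(G(n,m)) \le C/\sqrt{\log n}$ for all $m$ in $[m_-, m_+]$, we get $\lambda(G(n,\tau_c)) \le C/\sqrt{\log n}$, completing the proof. The only genuinely delicate bookkeeping is ensuring the per-$m$ failure probabilities in Theorem \ref{thm:ergap_giant} are summably small across the $O(n\log n)$-wide window; this is exactly where the quantitative strength of Theorem \ref{thm:ergap_giant} — in particular the factor $n\exp(-(2-\epsilon)d)$ rather than a weaker bound — is used, and it is worth remarking that a cruder spectral gap theorem valid only for $p \ge C\log n/n$ with large $C$ would still suffice for this corollary since $\tau_c$ lands well inside that regime too, but then the window union bound must be redone with the corresponding (still adequate) error term.
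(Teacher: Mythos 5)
Your high-level reduction is the right one --- $\tau_c$ lands at edge density $p \approx \log n/n$, above the $(\tfrac12+\delta)\log n/n$ threshold, the graph is connected there so $\tilde G = G(n,\tau_c)$, and $d \approx \log n$ gives the stated bound --- and you correctly identify that the crux is controlling the spectrum simultaneously over the random window containing $\tau_c$. But the union bound you propose over the window does not close, and the arithmetic you give for it is wrong. At $d \ge (1-\epsilon)\log n$ the dominant failure term in Theorem \ref{thm:ergap_giant} is $n\exp(-(2-\epsilon')d) = n^{1-(2-\epsilon')(1-\epsilon)}$, and since $(2-\epsilon')(1-\epsilon) < 2$ for all positive $\epsilon,\epsilon'$, this is $n^{-1+\Omega(\epsilon+\epsilon')}$, not $n^{-1-\Omega(1)}$ as you claim. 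The fluctuations of $\tau_c$ are of order $n$, so any deterministic window capturing $\tau_c$ w.h.p.\ contains $\Omega(n)$ values of $m$, and $\Omega(n)\cdot n^{-1+\epsilon''}$ diverges. Moreover this failure term cannot be improved by enlarging constants: it is essentially the probability that two low-degree vertices are adjacent (Proposition \ref{prop:key_structure}), a genuine structural event whose probability really is $n^{-1+o(1)}$ near the connectivity threshold. (The $G(n,p)\to G(n,m)$ transfer also costs an extra factor of order $\sqrt{n\log n}$ per value of $m$; and your closing remark that a gap theorem valid only for $p \ge C\log n/n$ with large $C$ would suffice here is false, since at $\tau_c$ one has $d\approx\log n$, i.e.\ $C=1$.)

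The paper avoids this by proving a genuinely simultaneous-in-time statement, Theorem \ref{thm:stopping_time}, from which Theorem \ref{thm:erstop} follows immediately. The conditions whose failure probability can be driven to $O(\exp(-md))$ for arbitrary $m$ --- bounded degree, the Kahn--Szemer\'edi adjacency bound, and the parallel eigenspaces condition --- are handled by exactly the discretize-and-union-bound device you have in mind, over a polynomial grid of times (Lemma \ref{lem:proc_easy}). The fuzz conditions, whose per-time failure probability is irreducibly $n^{-1+\epsilon}$, are instead controlled dynamically (Lemma \ref{lem:fuzz_process}): the set of low-degree vertices is monotone decreasing in $t$, and one bounds the conditional probability that each newly added edge creates an edge inside this set or a shared neighbor, summing these conditional probabilities over all edges added before the set empties out. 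That sequential argument, which exploits the correlation between consecutive graphs in the process rather than treating each time independently, is the missing ingredient in your proposal.
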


This theorem follows immediately from Theorem~\ref{thm:stopping_time} in the $2$--dimensional case: that theorem shows that the largest component of the \ER graph process has gap $\lambda( \tilde G) \leq C/\sqrt{\log n}$ for all time after $(\tfrac 14 +\delta)n\log n$ edges have been added, w.h.p.  Hence, at the connection time $\tau_c$, which occurs when about $(\tfrac 12)n \log n$ edges have been added, $ \lambda( G )  = \lambda( \tilde G) \leq C/\sqrt{\log n}.$

\section*{Applications to stochastic topology}

As we will see, Theorem \ref{thm:ergap_giant} is useful in the study of random topological spaces and random groups. We now provide several examples where this theorem yields sharp results. All of these new results depend on the  combination of the spectral gap theorem with ``Garland's method'' and its refinements by Ballman and \'Swi\k{a}tkowski \cite{BS}, and by \.Zuk \cite{Zuk,zuk2}.


\vspace{.05in}\noindent{\bf $\bullet$ Kazhdan's property (T).}
Linial and Meshulam~\cite{LM06} introduce an analogous measure $Y_2(n,p)$ to the binomial random graph for random $2$-dimensional simplicial complexes.
This is the probability distribution on all simplicial complexes with vertex set $[n]=\{1,2,\dots,n\}$, with complete $1$-skeleton (i.e.\ with all possible $n \choose 2$ edges), and such that each of the ${n \choose 3}$ possible $2$-dimensional faces are included independently with probability $p$. We use the notation $Y \sim Y_2(n,p)$ to indicate a complex drawn from this distribution.  We will call an edge \emph{isolated} if no triangle contains it.

We prove here a structure theorem for the random fundamental group, for a certain range of $p$.

\begin{theorem} \label{thm:structure}
  Suppose $\delta > 0$ is fixed,
$$ p \ge \frac{ ( 1 + \delta) \log n}{n},$$
and $Y \sim Y_2(n,p)$.
Then w.h.p.\ $\pi_1(Y)$ is isomorphic to the free product of a (T) group $G$, and a free group $F$, where the free group $F$ has one generator for every isolated edge in $Y$.
\end{theorem}

As a corollary, we also show that the threshold for $\pi_1(Y)$ to have property (T) agrees precisely with the homology-vanishing threshold found by Linial and Meshulam \cite{LM06}.  For the proof, along with further details and explanation, see Section \ref{sec:pi1}. 

It might be that $\pi_1(Y)$ is a free product of a (T) group and a free group for smaller $p$. The most interesting conjecture about the structure of $\pi_1(Y)$ might be the birth of a giant (T) free factor at $p = c / n$ for some constant $c \approx 2.7538$. This is the same point as the homological phase transition studied by Linial and Peled \cite{LP16}.

\vspace{.05in}\noindent{\bf $\bullet$ Random $d$-dimensional simplicial complexes.}
Meshulam and Wallach further generalize the $2$-dimensional model to random $d$-dimensional complexes $Y_d(n,p)$ \cite{MW09}. Their main result is that $ p = d \log n / n$ is a sharp threshold for vanishing of cohomology $H^{d-1}(Y, {\bf k})$ where ${\bf k}$ is a finite field or a field of characteristic $0$. The proof requires delicate cocycle counting arguments.

The new spectral gap results give a new proof of the Meshulam--Wallach theorem, in the case that $k$ is a field of characteristic $0$. The Meshulam--Wallach theorem is stronger topologically, since homology vanishing over a finite field implies vanishing over $\Q.$ But our new proof is very short (given the spectral gap theorem), and the result is slightly sharper probabilistically. For example, we obtain hitting time results in an accompanying stochastic growth process (see for Corollaries \ref{cor:Hhit} and \ref{cor:Thit} for representative examples of ``hitting time results''), and also we recover a simple proof of the Poisson distribution of Betti numbers in the critical window (Corollary \ref{cor:HPoisson}).

Gundert and Wagner show that the Laplacian on $(d-1)$-forms in a random $d$-complex has a large spectral gap for $p \ge C_d \log n / n$ for some sufficiently large $C_d$ \cite{GW12}. Combining their argument with the results in this paper would yield a hitting time result, and in particular this shows that the gap for these higher Laplacians is already large for $p \ge d \log n / n$.

Parzanchevski, Rosenthal, Tessler \cite{PRT13} combine Gundert and Wagner's argument with earlier work of Pach \cite{Pach98} to show that for $p \ge C_d \log n / n$, w.h.p.\ $Y$ has the ``geometric overlap'' property.\footnote{A sequence of $d$-dimensional simplicial complexes $S_n$ with $F_n$ $d$-dimensional faces has the \emph{geometric overlap} property if there exists a constant $\lambda > 0$ so that for every geometric map: $S_n \to \R^d$ (i.e.\ affine linear on each face), there exists a point $p \in \R^d$ that lies in the image of at least $\lambda F_n$ $d$-faces. See for example recent work by Gromov and collaborators in \cite{Gromov09}, \cite{Gromov10}, and \cite{FGLNP12}.} It also seems possible to use the new spectral gap results to sharpen this result, and show that in the process version of the random complex, random $d$-complexes already have the geometric overlap property as soon as they are pure $d$-dimensional.

As far as we can tell, these suggested sharpenings of the main theorems in \cite{GW12} and \cite{PRT13} are not written down anywhere, and we do not further elaborate on them in this note. It seems that these sharper results only depend on substituting our Theorem \ref{thm:ergap_giant} for earlier results on spectral gap $G(n,p)$. 

\vspace{.05in}\noindent{\bf $\bullet$ Triangular random groups.}
Antoniuk et.\ al.\ study the phase transitions that occur in the triangular model of random groups \cite{densitymodel}. Similarly, by using our spectral gap results, their results can be strengthened, for example to show a hitting time result.

\vspace{.05in}\noindent{\bf $\bullet$ Random flag complexes.}
Let $X(n,p)$ denote the random clique complex, i.e. the maximal simplicial complex, with respect to inclusion of faces, whose $1$--skeleton is given by an \ER graph $G(n,p).$

Combining the spectral gap theorem from an earlier version of this paper with Garland's method, similar cohomology vanishing results were recently obtained for $X(n,p)$ by the second author in \cite{flag}. Combining with several earlier results \cite{clique}, as a corollary this shows that for every $d \ge 3$, there is a wide range of $p$ for which $X(n,p)$ is rationally homotopy equivalent to a bouquet of $d$-dimensional spheres.\footnote{A simplicial complex is \emph{rationally homotopy equivalent} to a bouquet of $d$ spheres if it is simply connected and all of its nontrivial reduced, rational, homology is in degree $d$.}

\vspace{.05in}\noindent{\bf $\bullet$ Random right-angled Coxeter groups.} Group cohomology of random right-angled Coxeter groups were studied in \cite{DK12}. Applying the same techniques as in the random flag complex paper \cite{flag}, it is shown that for a certain measure and range of parameter, random right-angled Coxeter groups are rational duality groups with high probability. This is actually a special case of a more general statement that shows that the same holds for random graph products of finite groups.

\section*{Organization}

Section \ref{sec:rmt_background} contains the background about the spectrum of the normalized Laplacian of \ER random graphs. Section \ref{sec:rt} does the same for our applications of our spectral results to random topology.
In Section \ref{sec:est} we show how to transfer adjacency matrix estimates to the normalized Laplacian under some assumptions on the structure of the graph.
In Section~\ref{sec:pbnds} we show that an \ER graph satisfies these structural conditions with high probability.
In Section~\ref{sec:proc} we show that the Linial-Meshulam process has large gap in a local spectral sense.
In Section~\ref{sec:cohom} we show how to apply the \BS criterion to prove the structure theorem for rational cohomology, and in Section~\ref{sec:propT} we show how to apply \.Zuk's criterion to prove the structure theorem for the fundamental group.
In Section~\ref{sec:KSz} we apply the Kahn-Szemer\'erdi machinery to show that the adjacency matrix of the \ER graph has a gap of the correct order for any $p$ with $p = \Omega( \log n / n).$
Finally, we include one appendix which proves the precise versions of the tail bounds for binomial variables that we use.

\section{Background: spectra of random graphs} \label{sec:rmt_background}

There are multiple common notions of spectra of a graph.  The most elementary definition is given by the eigenvalues of the adjacency matrix $A$.  The subjects of our main theorems are the eigenvalues of the normalized Laplacian $\EL$ (see~\eqref{eq:nldef} for a precise definition).  
When the graph is regular, these two notions of spectra are just shifted rescalings of one another.

Appropriately, when the graph is nearly regular, as is the case for $G(n,p)$ with $p = \omega(\log n / n),$ these two spectra behave in nearly the same way.  Coarse statements about the {spectral gap} of $G(n,p)$ in this regime can largely be considered a statement about either spectra, and indeed, the primary method for estimating the gap of $\EL$ in the setting of \ER graphs is by comparison with $A.$

We will now give a precise definition of the normalized Laplacian.
A good general introduction to the properties of the normalized Laplacian is available in~\cite{Chung}.
Let $\pi_+$ be the projection map onto the vertices with positive degree, let $T$ be the diagonal matrix of degrees, and let $A$ be the adjacency matrix.
The normalized Laplacian is defined as
\begin{equation}\label{eq:nldef}
\EL = \pi_{+} - T^{-1/2}AT^{-1/2},
\end{equation}
where $T^{-1/2}$ is taken to be $0$ in coordinates where the degree is $0.$
Note that some authors use an alternate definition of normalized Laplacian, with $\pi_{+}$ replaced by $\operatorname{Id}$.  We let $0 =\lambda_1 \leq \lambda_2 \leq \ldots \leq \lambda_n \leq 2$ be the eigenvalues of $\EL.$

 The principal nontrivial property we will employ about $\EL$ is that the dimension of the kernel is equal to the number of components of $G$.  An immediate consequence is that for a graph with multiple nontrivial components, $\lambda_2=0.$ In particular, when $np - \log n \to -\infty$ the normalized Laplacian has no spectral gap with high probability. That said, it still makes sense to consider the spectral gap of $\EL$ restricted to the giant component.

\subsubsection*{Techniques for estimating eigenvalues}

As $A$ has i.i.d.\ entries above the diagonal, many off-the-shelf techniques can be applied to it directly.  In particular, the original \emph{trace method} bound of F\"uredi and Koml\'os~\cite{FurediKomlos} can be extended to show that when $p = \omega(\log^6 n/n),$ the second largest eigenvalue of the adjacency matrix of an \ER graph is of smaller order than the largest eigenvalue.  Improvements and corrections to this argument brought the bound to $p = \omega(\log^4 n /n),$ \cite{Vu07} and later to as low as $p \gg \log^2 n/n$ \cite{ChungLuVu}.  Newer methods have been pursued in \cite{llv}, \cite{bbk}, \cite{bbk2}.

The alternative method of
Kahn and Szemer\'edi~\cite{FKSz}, first developed for bounding the spectral gap of $d$-regular graphs, has been adapted quite successfully for estimating the spectral gap in the $p = \Theta(\log n / n)$ regime by Feige and Ofek~\cite{FeigeOfek}.  In particular, they show that there are constants $c > 0$ and $K > 0$ so that for $p > c\log n / n,$ all but the first eigenvalue are at most $K\sqrt{np}.$

One contribution of this paper is a sharpening of this estimate (see Proposition~\ref{prop:adj}).  Indeed, we show that for \emph{any} $c>0,$ there is a $K>0$ so that for $p > c\log n / n,$ all but the first eigenvalue are at most $K\sqrt{np}.$  Conversely, it is easily checked that for $p = o( \log n / n),$ there are many eigenvalues greater in magnitude than $\sqrt{np},$ coming from the existence of high-degree stars in the graph.  Thus, in a sense, we sharpen the Kahn-Szemer\'edi analysis of the \emph{full} adjacency matrix of $G(n,p)$ to its natural endpoint.

However, our main contribution in this paper is a technique for exactly characterizing \emph{when and why} the extremal eigenvalues of the normalized Laplacian stop tracking the extremal eigenvalues of the adjacency matrix.  Throughout the $p = \Theta(\log n /n),$ the extremal eigenvalues of the adjacency matrix do not undergo a phase transition (see Proposition~\ref{prop:adj}).  

In contrast, for the Laplacian, there is a transition at $p = \log n / n,$ before which point the graph has isolated vertices.  Each isolated vertex contributes a $0$-eigenvalue to the spectra of the Laplacian, but as a consequence of Theorem~\ref{thm:ergap_giant}, the remaining eigenvalues will be $1 + O(1/\sqrt{np})$ as anticipated. There is a second transition at $p = \tfrac 12 \log n/n$ below which there are quadruplets of vertices in the giant component on which the induced graph is a path.  These quadruplets each contribute an eigenvalue near to $\tfrac 12,$ but the remainder of the spectra will again be $1+O(1/\sqrt{np}).$  Continuing in this way, we conjecture that there are a whole family of transitions at $\tfrac 1 k \log n / n$ for any natural number $k,$ where the spectral gap of the giant component is asymptotically the spectral gap of a path on $k$ vertices.  

\subsection{Comparing spectra and the gap theorem proof approach} \label{sec:gap_proof}

While it is relatively straightforward to transfer estimates on the gap of $A$ to the gap of $\EL$ in the $p = \omega(\log n / n)$ regime, Coja-Oghlan~\cite{CojaOghlan} sharpens this analysis to show that there are $c>0$ and $K>0$ so that for $p \geq c \log n/n,$ all but the smallest eigenvalue of $\EL$ are at most $K/\sqrt{np}$ in modulus with high probability.

There are some similarities between our approach and the method of Coja-Oghlan~\cite{CojaOghlan}.  His analysis rests on applying the Kahn-Szemer\'edi machinery to the adjacency matrix of a sufficiently regular subgraph of $G(n,p)$ and then arguing this core of the graph determines the eigenvalues of the Laplacian of the whole graph.  We make a finer analysis of the structure of $G(n,p)$ in the $p = \Theta(\log n / n)$ regime in order to show that in fact the spectra of the adjacency matrix and the spectra of the normalized Laplacian only fail to be comparable when small sparse subgraphs appear.

To bound $\max_{i > 1} \left| 1 - \lambda_i\right|$ it suffices instead to bound the spectrum of what is essentially $I - \EL.$ Given the graph $G$ with vertices $\{1,2,\dots,n\}$ we define the matrix
\[
M_{u,v} =
\begin{cases}
\frac{1}{\sqrt{\deg(u)}\sqrt{\deg(v)}}& \text{if $u$ is adjacent to $v$,}
\\
0 &\text{otherwise.}
\end{cases}
\]
Thus if all degrees are positive we have
\[M = T^{-1/2}AT^{-1/2},\]
and it is easily checked that for any vertex set $W$ of a connected component of $V$, $T^{1/2}\one_{W}$ is an eigenvector with eigenvalue one.

Set $S=\{x~\vert~x^t\one = 0\}.$  The standard Kahn-Szemer\'erdi machinery applied to the adjacency matrix shows that
\[
|x^tAy| \leq C\sqrt{d} \|x\|\|y\|,
\]
where $d = np,$ for all $x\in S$ and all $y \in \R^n,$ provided $p = \Omega(\log n / n).$

When $p > (1+\epsilon) \log n / n,$ the comparison is relatively straightforward, by virtue of the fact that with high probability all the degrees in the graph are larger than $d/M$ for some sufficiently large $M$.  In particular, this means that $\|T^{-1/2}\| \leq \sqrt{M}/\sqrt{d}.$  One must additionally show that $T^{-1/2}\one$ is nearly parallel to $\one,$ i.e. $T^{-1/2}$ nearly maps the space $S$ to itself.  In sum, these two facts show that for $x\in S,$ $T^{-1/2}x$ is still nearly in $S$ and has norm $\|T^{-1/2}x\| \leq \sqrt{M}\|x\|/\sqrt{d}.$  Thus,
\[
  |x^t M x| = |(T^{-1/2}x) A (T^{-1/2}x)| \approx C\sqrt{d} \|T^{-1/2}x\|^2  \leq C M \|x\|^2/\sqrt{d},
\]
giving the desired result.

Likewise, when $p > \frac{\log n + (\log n)^{1/2+\delta}\log\log n}{n},$ the minimal degree of the graph is still at least $d^{1/2+\delta}$ w.h.p.  In this case, the $T^{-1/2}$ still nearly maps $S$ to $S$, but now $\|T^{-1/2}x\| \leq d^{-1/4-\delta/2}.$  This allows one to show that
\[
\max_{i > 1} \left| 1 - \lambda_i\right| < d^{-\delta},
\]
which is essentially the approach taken by an earlier version of this paper.

To get theorems that hold all the way down to below $p = \log n/n$, where the minimum degree drops to $0$ an additional argument is needed.  This is because it is no longer the case that $\|T^{-1/2}\| = O(1/\sqrt{d}).$ The key structure theorem that allows the comparison to go through is an analysis of the graph structure surrounding low-degree vertices.  Precisely, we show that near the connectivity threshold, there are no edges between low-degree vertices, and low-degree vertices do not even have shared neighbors (see Proposition~\ref{prop:key_structure}).  Thus, they are only connected through the large, high-degree core.  This is enough to ensure that the desired spectral properties persist all the way down to around $p \sim 1/2 \log n / n.$

On the other hand, below $p \sim 1/2 \log n / n,$ low-degree vertices in the giant component begin to connect with high probability.  Indeed, it is possible to show that there are even two degree $2$ vertices that connect to each other and the high-degree core.  This is enough to ensure that $\lambda_2$ of the giant component is at most a little above $\tfrac12$ and $\lambda_n$ is at least $\tfrac32.$

\subsection{Further Discussion}\label{sec:discussion}

For $p$ satisfying $np - \log n \to \infty,$ we have provided a bound on $\lambda(G)$ that is sharp up to a constant multiplicative factor.  For the adjacency matrix in many regimes, much more is known about the behavior of the second largest eigenvalue.

Recall that a Wigner matrix is a symmetric matrix with independent, centered, variance $1$ entries above the diagonal.
From Wigner's celebrated semicircle law, it can be inferred that the largest eigenvalue of such a matrix is around $2\sqrt{n}.$  In fact a much stronger result is known for a large class of Wigner matrices,
for which it is seen that
\[
  n^{1/6}(\lambda_1 - 2\sqrt{n}) \weakto X
\]
where $X$ follows the GOE Tracy-Widom law.  When the entry distributions are $\Bernoulli(p)$ -- i.e. when this is the adjacency matrix of an \ER graph -- it was recently shown by Knowles, L. Erd\H{o}s, Yau and Yin~\cite{KEYY} that for $p \gg n^{-1/3},$ the analogous results hold for the second largest eigenvalue.  One of the limits of comparing the spectra of the adjacency matrix and the Laplacian matrix is that such a fine statement about the spectra does not easily transfer.  It is appealing to speculate that at $p \sim \log n/ n,$ the smallest nonzero eigenvalue of the normalized Laplacian is exactly $1-(2-o(1))\sqrt{np},$ consistent with what would be predicted by the semicircle law of the adjacency matrix.

The spectral gap of the normalized Laplacian is strongly related to other probabilistic quantities of the graph, in particular to properties of simple random walk (see~\cite{Chung} for more details) and to the Cheeger constant.  Direct analysis of these quantities is also possible, which then implicitly give bounds on the spectral gap.
Benjamani et.\ al. take a combinatorial approach and study the Cheeger constant (also called isoperimetric constant, or conductance) throughout the evolution of the random graph process \cite{cheegerconstant}.  Likewise Fountoulakis and Reed study the mixing time of simple random walk on the giant component through the conductance~\cite{FR} in the strictly supercritical regime $\tfrac{1+\epsilon}{n} < p < \tfrac{\sqrt{\log n}}{n}.$
Ding et.\ al.\ studied probabilistic aspects of the graph including the mixing time of simple random walk on the giant component as the graph emerges from the critical window \cite{peresrandomwalk}.  All these works show that the giant component can be partitioned into a well connected expanding core together with small (logarithmic size) graphs attached to the core.  We also employ a version of this decomposition to analyze the spectral properties of the graph.

\section{Random topology} \label{sec:rt}

In~\cite{LM06},
Linial and Meshulam introduce an analogous measure $Y_2(n,p)$ to the binomial random graph for random $2$-dimensional simplicial complexes.
This is a probability distribution over all simplicial complexes with vertex set $[n]=\{1,2,\dots,n\}$ with complete $1$-skeleton (i.e.\ with all possible $n \choose 2$ edges). Each of the ${n \choose 3}$ possible $2$-dimensional faces are included independently with probability $p$. We use the notation $Y \sim Y_2(n,p)$ to indicate a complex drawn from this distribution.
Meshulam and Wallach~\cite{MW09} extend this definition to a $d$-dimensional complex $Y_d(n,p)$, formed by taking the complete $(d-1)$-skeleton of the $n$-vertex simplex, and including $d$-dimensional faces independently with probability $p.$

The distributions can be made into stochastic growth processes in a natural way.
Let $Y_2(n,m)$ be the random $2$-complex that has the uniform distribution over all simplicial complexes with $n$ vertices, $n \choose 2$ edges, and exactly $m$ two-dimensional faces. In the random complex process $\{ Y_2(n,m) \}$, faces are added one at a time, uniformly randomly from all faces which have not already been chosen.  In the same way, we can define the process $\{Y_d(n,m)\}$ by including $d$-faces one at a time.

We also define a time-changed version of this process $\LMP,$ more suitable to working with the binomial complex.  Instead of including the faces one at a time, create independent $\Exponential(1)$ clocks for every $d$-face.  When one of the clocks rings, include the corresponding face.  If we let $p(t) = 1 - e^{-t},$ then $\LMP$ has the distribution $Y_d(n,p(t)).$

\subsection{Cohomology vanishing}

The foundational work on the Linial-Meshulam complexes is a cohomological analogue of the~\ER connectivity theorem.
\begin{LMW}
\label{thm:LMW}
Let ${\bf k}$ be any finite field, $d \ge 2$ fixed, $f(n) \to \infty$ be any slowly growing function, and $Y \sim Y_d(n,p)$.
If
$$ p \ge \frac{d\log n + f(n)}{n},$$
then w.h.p.\ $H^{d-1}(Y, {\bf k}) = 0$, and
if
$$ p \le \frac{d\log n-f(n)}{n},$$
then w.h.p.\ $H^{d-1}(Y, {\bf k}) \neq 0$.
\end{LMW}
For the case that $d=2$ and ${\bf k}={\Z}_2,$ this is due to Linial and Meshulam~\cite{LM06}, while for the version stated, this is due to Meshulam and Wallach~\cite{MW09}.  By the universal coefficient theorem, these results imply the corresponding theorem for the cohomology with $\Q$ coefficients.  For $\Z$ coefficients, it is shown by the authors in~\cite{HKP} that for $p \geq 80d\log n / n$, $H^{d-1}(Y, \Z) = 0$ by other techniques.  For $d=2,$ work of \cite{LuczakPeled} establishes $2\log n/n$ as the sharp threshold for vanishing $\Z$ homology.

The threshold $p \sim d\log n / n$ is also the threshold for the existence of isolated $(d-1)$-faces in the complex, i.e. faces that are not included in any $d$-face.  Indeed, the presence of isolated faces is precisely the reason that the cohomology is nonzero below this threshold.  In fact, a finer statement can be made about the number of isolated $(d-1)$-faces.
\begin{lemma}
\label{lem:poisson_faces}
Let $I$ denote the number of isolated $(d-1)$ faces in $Y_d(n,p).$  Suppose that for fixed $c,$
\[
p = \frac{d \log n + c+o(1)}{n}.
\]
Then $I$ converges in law to $\Poisson( e^{-c}/d!).$
\end{lemma}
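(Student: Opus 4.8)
The plan is to prove Poisson convergence by the method of factorial moments. First set up notation: a $(d-1)$-face of $Y$ is a $d$-element subset $\sigma\subset[n]$, and all $\binom nd$ of these are present by definition, while the $d$-faces (the $(d+1)$-subsets) are genuinely independent $\Bernoulli(p)$ variables, since every choice of $d$-faces yields a legitimate complex with complete $(d-1)$-skeleton. A fixed $\sigma$ is contained in exactly the $n-d$ potential $d$-faces $\sigma\cup\{v\}$, $v\notin\sigma$, so $\Prob[\sigma\text{ isolated}]=(1-p)^{n-d}$. With $p=(d\log n+c+o(1))/n$ one has $p(n-d)=d\log n+c+o(1)$ and $np^2=O(\log^2 n/n)=o(1)$, hence $(1-p)^{n-d}=\exp(-p(n-d)+O(np^2))=n^{-d}e^{-c}(1+o(1))$; combined with $\binom nd=\tfrac{n^d}{d!}(1+o(1))$ this gives $\Exp[I]=\binom nd(1-p)^{n-d}\to e^{-c}/d!=:\mu$.

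Next I would compute, for each fixed $k\ge 1$, the $k$-th factorial moment $\Exp\bigl[I(I-1)\cdots(I-k+1)\bigr]=\sum\Prob[\sigma_1,\dots,\sigma_k\text{ all isolated}]$, the sum over ordered $k$-tuples of distinct $(d-1)$-faces. The combinatorial heart of the matter is the dichotomy: the isolation events of $\sigma_i$ and $\sigma_j$ involve disjoint sets of potential $d$-faces -- and are therefore independent -- unless $|\sigma_i\cap\sigma_j|=d-1$, in which case they share the single $d$-face $\sigma_i\cup\sigma_j$. Call a tuple \emph{generic} if no two of its faces meet in $d-1$ points. A generic tuple has $\Prob[\text{all isolated}]=\bigl((1-p)^{n-d}\bigr)^k$ exactly; the number of generic ordered $k$-tuples is $\binom nd^k(1+o(1))$, because the number of pairs $(\sigma,\sigma')$ with $|\sigma\cap\sigma'|=d-1$ is $O(n^{d+1})$ and hence the number of non-generic $k$-tuples is $O(n^{d(k-1)+1})=o(n^{dk})$ (here $d\ge 2$ is used). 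So the generic contribution is $\binom nd^k\bigl((1-p)^{n-d}\bigr)^k(1+o(1))\to\mu^k$. For any tuple, inclusion–exclusion shows the set of $d$-faces whose absence is required has size at least $k(n-d)-\binom k2$, so the isolation probability of every tuple is $O(n^{-dk})$; multiplying by the $o(n^{dk})$ count of non-generic tuples bounds their total contribution by $o(1)$. Hence $\Exp\bigl[I(I-1)\cdots(I-k+1)\bigr]\to\mu^k$ for all $k$, and since $I$ is $\N$-valued the standard moment criterion for Poisson convergence (convergence of all factorial moments to $\mu^k$ implies convergence in law to $\Poisson(\mu)$) yields $I\Rightarrow\Poisson(e^{-c}/d!)$. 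Alternatively, and more slickly, one can run the Chen–Stein method with the dependency neighbourhood of $\sigma$ taken to be $\{\sigma':|\sigma\cap\sigma'|\ge d-1\}$, which has size $O(n)$: the error terms $b_1=O\bigl(\binom nd\cdot n\cdot((1-p)^{n-d})^2\bigr)=O(n^{1-d})$ and $b_2=O\bigl(\binom nd\cdot n\cdot(1-p)^{2(n-d)-1}\bigr)=O(n^{1-d})$ both vanish for $d\ge 2$, while $b_3=0$, giving $I$ within total variation $o(1)$ of $\Poisson(\Exp[I])$.

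The only real obstacle is the bookkeeping in the factorial-moment sum: one must check carefully that every family of $(d-1)$-faces containing a ``linked'' pair is negligible, both in sheer count ($o(n^{dk})$ of them) and in that sharing $d$-faces changes the relevant exponent by only $O(k^2)=O(1)$ and hence the isolation probability by a $1+o(1)$ factor. This is also precisely where the hypothesis $d\ge2$ enters -- for $d=1$ every pair of $0$-faces is linked (the edge through them), so the ``generic'' regime is empty and one would instead fall back on the classical isolated-vertex computation for $G(n,p)$. Everything else reduces to the routine binomial asymptotics for $(1-p)^{n-d}$ already used for the first moment.
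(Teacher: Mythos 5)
Your proof is correct, and it is exactly the argument the paper has in mind: the paper omits the proof entirely, remarking only that it is ``standard and can be proved in the same manner as the Poisson convergence of the number of isolated vertices in $G(n,p)$'' with a pointer to Ross's Stein's-method survey, i.e.\ precisely the Chen--Stein/factorial-moment computation you carry out (including the key observation that two $(d-1)$-faces have dependent isolation events only when they meet in $d-1$ vertices, and that such pairs contribute $o(1)$ for $d\ge 2$). Nothing further is needed.
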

The proof of this lemma is standard and can be proved in the same manner as the Poisson convergence of the number of isolated vertices in $G(n,p).$  See Proposition 4.13 of~\cite{Ross}.

Using spectral techniques, we give a new proof of the Linial--Meshulam--Wallach theorem, although only with $\Q$ or $\R$ coefficients.  However, for $\Q$ coefficients, we also sharpen the theorem by proving a process version.  More strikingly, this theorem shows that long before the last isolated $(d-1)$-faces disappear, the only obstruction to vanishing cohomology are those isolated $(d-1)$-faces.  Its proof follows almost immediately from spectral arguments and Garland's method (see Section~\ref{sec:cohom}).

\begin{theorem} \label{thm:Hhit}
Consider the random complex process $\{ \LMP \}$.  Let $I_t$ denote the number of isolated $(d-1)$-faces in the complex at time $t.$  Fix any $\delta >0$ and define $t_0$ so $p(t_0) = (d-1+\delta)\log n / n.$  Then w.h.p.\ for all time $t \geq t_0,$
\[
H_{d-1}(\LMP, \Q) \cong \Q^{I_t}.
\]
\end{theorem}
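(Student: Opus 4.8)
The plan is to prove Theorem~\ref{thm:Hhit} by combining the spectral gap theorem (Theorem~\ref{thm:ergap_giant}) with Garland's method, applied not to a single complex but uniformly along the random complex process $\{\LMP\}$. The first observation is that for a $d$-dimensional complex $Y$ with complete $(d-1)$-skeleton, Garland's method says that if every codimension-$2$ link (i.e., the link of every $(d-2)$-face, which is a graph on the remaining vertices) has spectral gap $\lambda(\mathrm{lk}) < 1/d$ in the appropriate normalized sense, then $H^{d-1}(Y,\mathbb{Q})$ is supported only on the "non-covered" $(d-1)$-faces; more precisely, a harmonic cocycle must be locally constant on each link, which forces it to be supported on the isolated $(d-1)$-faces. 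Dually, one gets $H_{d-1}(\LMP,\mathbb{Q}) \cong \mathbb{Q}^{I_t}$, where the $I_t$ isolated faces each contribute one dimension (a boundary of such a face is not killed, and there are no other cycles modulo boundaries). So the topological content reduces entirely to a spectral estimate on the links.

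\textbf{Key steps, in order.} First I would set up the link structure: fix a $(d-2)$-face $\sigma$; its link in $\LMP$ is a graph on $n-d+1$ vertices where the edge $\{u,v\}$ is present precisely when $\sigma \cup \{u,v\}$ is a $d$-face of $\LMP$, and by the product structure of the model these edges appear as an \ER graph $G(n-d+1, p(t))$. There are $\binom{n}{d-1}$ such links, each an \ER graph, though they are far from independent. Second, I would apply Theorem~\ref{thm:ergap_giant} (or rather its process version, Theorem~\ref{thm:stopping_time}, referenced in the excerpt) to a single link: with $p(t_0) = (d-1+\delta)\log n / n$, the link graph has expected degree $d_{\mathrm{lk}} \approx (d-1+\delta)\log n \to \infty$, so its giant component has $\lambda < C/\sqrt{d_{\mathrm{lk}}} = o(1)$, which is in particular less than $1/d$ eventually, with failure probability at most $C n \exp(-(2-\epsilon)d_{\mathrm{lk}}) + C\exp(-d_{\mathrm{lk}}^{1/4}\log n)$. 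Third — and this is where the choice of threshold $(d-1+\delta)$ rather than $(d+\delta)$ is exploited — I would union-bound over all $\binom{n}{d-1} \le n^{d-1}$ links and over a sufficiently fine net of times $t \ge t_0$ (using monotonicity of the process to handle all intermediate times, since adding faces only adds edges to links and the relevant events are monotone). The failure probability per link is roughly $n \exp(-(2-\epsilon)(d-1+\delta)\log n) = n^{1-(2-\epsilon)(d-1+\delta)}$, and multiplying by $n^{d-1}$ gives $n^{d - (2-\epsilon)(d-1+\delta)}$, which tends to $0$ once $\epsilon$ is small because $(2-\epsilon)(d-1+\delta) > d$ for $\delta > 0$ and $d \ge 2$. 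Fourth, I would handle the non-giant part of each link: a link might have small components, but these are themselves complete graphs' worth of structure that either lie in the $I_t$ count or are controlled by the structural results of Section~\ref{sec:pbnds}; one needs that the "bad" part of each link (vertices outside the giant component) corresponds exactly to faces already counted among the isolated ones, or contributes nothing to $H_{d-1}$. Finally, I would assemble: conditioned on all links having the spectral gap property simultaneously for all $t \ge t_0$, Garland's method gives $H^{d-1}(\LMP,\mathbb{Q}) \cong \mathbb{Q}^{I_t}$ at every such time, and dualize to $H_{d-1}$.

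\textbf{The main obstacle} I expect is the union bound over all links combined with the process (all times $t \ge t_0$). A naive union bound over links already consumes almost the entire probabilistic budget — it works only because Theorem~\ref{thm:ergap_giant}'s failure probability is $n\exp(-(2-\epsilon)d)$ with the sharp constant $2$, and the threshold is set at $(d-1+\delta)$ exactly so that $(2-\epsilon)(d-1+\delta)$ clears $d$; any weaker spectral estimate (e.g., the Coja--Oghlan bound) would not close this gap, which is precisely the point of the paper. Layering the process on top requires care: one must argue that verifying the spectral condition at countably many (or a polynomial net of) times suffices, using that the relevant events — "the giant component of each link has spectral gap below $1/d$" — behave monotonically or near-monotonically as faces are added, so that no bad time is missed between net points. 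A secondary subtlety is the precise bookkeeping in Garland's method relating harmonic $(d-1)$-cochains to the $I_t$ isolated faces (as opposed to, say, faces in small link-components), which I would handle by a careful local analysis showing that a link failing to be connected forces the corresponding partial coboundary structure to decouple cleanly; this is where one invokes the structural Proposition~\ref{prop:key_structure}-type input about low-degree vertices. Modulo these two points, the argument is short, exactly as the excerpt advertises.
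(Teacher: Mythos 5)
Your overall architecture matches the paper's: spectral gap of every codimension-$2$ link, the Ballman--\'Swi\k{a}tkowski criterion to kill $H^{d-1}$ of the complex with isolated $(d-1)$-faces removed, a union bound over the $\binom{n}{d-1}$ links that closes precisely because the failure probability $n\exp(-(2-\epsilon)d_{\mathrm{lk}})$ has the sharp constant $2$ and $p(t_0)=(d-1+\delta)\log n/n$, and finally a separate accounting (the paper uses Mayer--Vietoris) showing each isolated $(d-1)$-face contributes one $\Q$. Your arithmetic $n^{d-(2-\epsilon)(d-1+\delta)}\to 0$ is exactly the paper's reason for the threshold.

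The genuine gap is in your third step, where you claim that a net of times plus ``monotonicity or near-monotonicity'' of the spectral events handles all $t\ge t_0$. The events are not monotone: the proof of the gap theorem rests on structural conditions on the low-degree set $\fuzz$ of each link (no edges inside $\fuzz$, no vertex with two neighbors in $\fuzz$), and adding a $d$-face can \emph{create} such a forbidden edge before the degrees have risen enough to shrink $\fuzz$. Nor can you rescue this with a polynomial net of times: the net costs a factor $n^{O(k)}$, and the fuzz conditions fail with probability only $n\exp(-(2-\epsilon)d)$, which has no room left after the union bound over links --- unlike the b.d.c., adjacency, and parallel-eigenspace conditions, whose $\exp(-md)$ failure probability (with $m$ arbitrary) absorbs the net; the paper's Lemma~\ref{lem:proc_easy} does exactly that split. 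For the fuzz conditions the paper instead proves a global second-moment bound $\sum_f |\fuzz[f][t_0]|^2\le n^{1-\epsilon}$ (Lemma~\ref{lem:supersmall}) and then conditions on the filtration of the face-arrival process to show that, with high probability, no face added during $[t_0,t_1]$ lands on two fuzz vertices or on a fuzz vertex and one of its neighbors (Lemma~\ref{lem:fuzz_process}); after $t_1$ all fuzz sets are empty and the issue disappears. This is the missing idea, and it is where most of the work in the paper's Section~\ref{sec:proc} lives. A smaller omission: the \BS criterion requires the complex (with isolated $(d-1)$-faces deleted) to be pure $d$-dimensional, which forces a first-moment check that no $(d-2)$-face is isolated at time $t_0$; your proposal never verifies purity.
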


As w.h.p. $I_{t_0} > 0$ we immediately get the following hitting time corollary.

\begin{corollary} \label{cor:Hhit} Consider the random complex process $\{ Y_d(n,m) \}$. Let
$$M_1 = \min \{ m \mid Y_d(n,m) \mbox{ has no isolated } (d-1)-\mbox{dimensional faces}\},$$
and let
$$M_2 = \min \{ m \mid H^{d-1}(Y_d(n,m), \Q) = 0 \}.$$
Then w.h.p.\ $M_1 = M_2$.
\end{corollary}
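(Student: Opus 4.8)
The plan is to deduce the hitting-time statement directly from Theorem \ref{thm:Hhit} together with a monotonicity argument. First I would observe the trivial inclusion $M_1 \geq M_2$: as soon as there are no isolated $(d-1)$-faces, the cohomology vanishes if $I_t = 0$ forces $H^{d-1} = 0$ — but wait, this direction actually needs the \emph{content} of Theorem \ref{thm:Hhit}, so let me reorganize. The genuinely easy direction is $M_2 \geq M_1$: if $Y(n,m)$ still has an isolated $(d-1)$-face $\sigma$, then the indicator cochain dual to $\sigma$ (or rather, a cocycle supported near $\sigma$) is a nontrivial element of $H^{d-1}(Y(n,m),\Q)$, because $\sigma$ lies in no $d$-face and hence no coboundary can cancel it; this is the standard observation that isolated faces obstruct cohomology vanishing, and it holds deterministically for every $m$. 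Therefore deterministically $M_2 \geq M_1$.

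For the reverse inequality $M_2 \leq M_1$ — equivalently, that the cohomology has already vanished by the time the last isolated face disappears — I would invoke Theorem \ref{thm:Hhit}. The point is to pass from the continuous-time process $\LMP$ (equivalently $Y_d(n,p(t))$) to the discrete process $Y_d(n,m)$, which is straightforward since the continuous process, observed at its jump times, is exactly the discrete process $\{Y_d(n,m)\}$; the $\Exponential(1)$ clock construction makes the two processes couplable so that $\LMP = Y_d(n, N_t)$ where $N_t$ is the number of faces present at time $t$. Fix $\delta > 0$ and let $t_0$ be as in Theorem \ref{thm:Hhit}, so that $p(t_0) = (d-1+\delta)\log n/n$. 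By Lemma \ref{lem:poisson_faces} (or rather its monotone consequence), at $t = t_0$ the complex w.h.p.\ still has isolated $(d-1)$-faces, i.e.\ $I_{t_0} > 0$; in fact since $p(t_0) n = (d-1+\delta)\log n$ is well below the isolated-face threshold $d \log n$, the expected number of isolated faces is $n^{\delta + o(1)} \to \infty$, so $I_{t_0} > 0$ w.h.p. Now Theorem \ref{thm:Hhit} says that w.h.p., for \emph{all} $t \geq t_0$, one has $H_{d-1}(\LMP, \Q) \cong \Q^{I_t}$, hence (dualizing over the field $\Q$) $\dim H^{d-1} = I_t$. In particular, at the (random) moment $t^*$ when the last isolated face vanishes — which w.h.p.\ satisfies $t^* \geq t_0$ since isolated faces still exist at $t_0$ — we have $I_{t^*} = 0$ and therefore $H^{d-1}(\LMP[t^*], \Q) = 0$. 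Translating to the discrete process, $M_1 = N_{t^*}$ (the number of faces at that moment) and $M_2 \leq N_{t^*} = M_1$.

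Combining the two inequalities gives $M_1 = M_2$ w.h.p., as claimed. I do not expect any real obstacle here: the only points requiring minor care are (i) making the deterministic coboundary-obstruction argument for $M_2 \geq M_1$ fully precise over $\Q$ coefficients — this is routine, one exhibits an explicit cocycle supported on $\sigma$ together with one chosen $(d-2)$-face of its boundary — and (ii) checking that the event ``$H^{d-1} \cong \Q^{I_t}$ for all $t \geq t_0$'' from Theorem \ref{thm:Hhit} combined with ``$I_{t_0} > 0$'' really does pin down the hitting times on a single high-probability event, which it does since both are w.h.p.\ events and we may intersect them. The substantive work has already been done in Theorem \ref{thm:Hhit}; this corollary is purely a packaging step.
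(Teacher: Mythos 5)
Your proposal is correct and follows essentially the same route as the paper: the corollary is deduced from Theorem \ref{thm:Hhit} together with the observation that w.h.p.\ $I_{t_0}>0$, so both hitting times occur after $t_0$ and the isomorphism $H_{d-1}(\LMP,\Q)\cong\Q^{I_t}$ forces them to coincide (your extra deterministic argument that isolated faces obstruct vanishing is subsumed by this isomorphism for $t\ge t_0$). One trivial slip: at $p(t_0)=(d-1+\delta)\log n/n$ the expected number of isolated $(d-1)$-faces is of order $n^{1-\delta}/d!$, not $n^{\delta+o(1)}$, which still tends to infinity for $\delta<1$.
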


Further, it is standard to show at this point that the Betti numbers are asymptotically Poisson.
\begin{corollary} \label{cor:HPoisson} Suppose that for fixed $c,$
\[
p = \frac{d \log n + c+o(1)}{n}.
\]
Then $b_{d-1}(Y_d(n,p))$ converges in law to $\Poisson( e^{-c}/d!).$
\end{corollary}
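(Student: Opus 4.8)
The plan is to combine Theorem \ref{thm:Hhit} with the Poisson convergence of isolated faces recorded in Lemma \ref{lem:poisson_faces}. Fix $c \in \R$ and suppose $p = (d\log n + c + o(1))/n$. Since this $p$ satisfies $p \geq (d-1+\delta)\log n/n$ for any $\delta < 1$ once $n$ is large (indeed $p$ sits essentially at the ``$d\log n$'' scale, well above the ``$(d-1+\delta)\log n$'' scale for $\delta$ small), Theorem \ref{thm:Hhit} applies at the corresponding time $t$ with $p(t) = p$. Hence, writing $Y = Y_d(n,p)$ and $I$ for the number of isolated $(d-1)$-faces of $Y$, we get with high probability
\[
H_{d-1}(Y,\Q) \cong \Q^{I},
\]
and therefore $b_{d-1}(Y) = \dim_\Q H_{d-1}(Y,\Q) = I$ with high probability. (Here I use that $H^{d-1}(Y,\Q)$ and $H_{d-1}(Y,\Q)$ have the same dimension over the field $\Q$ by the universal coefficient theorem, so ``$b_{d-1}$'' is unambiguous.)

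Next, Lemma \ref{lem:poisson_faces} gives that $I$ converges in law to $\Poisson(e^{-c}/d!)$. The final step is the elementary observation that if two integer-valued random variables $X_n$ and $Y_n$ satisfy $\Prob(X_n = Y_n) \to 1$ and $Y_n \Rightarrow Z$, then $X_n \Rightarrow Z$: for any fixed $k \geq 0$,
\[
|\Prob(X_n = k) - \Prob(Y_n = k)| \leq \Prob(X_n \neq Y_n) \to 0,
\]
so $\Prob(b_{d-1}(Y) = k) \to \Prob(\Poisson(e^{-c}/d!) = k)$ for every $k$, which is precisely convergence in law to $\Poisson(e^{-c}/d!)$ for $\N$-valued variables.

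There is no real obstacle here; the corollary is a formal consequence of the two preceding results, and the only point requiring any care is checking that the hypothesis $p = (d\log n + c + o(1))/n$ indeed lies in the range $p \geq (d-1+\delta)\log n/n$ where Theorem \ref{thm:Hhit} is valid — which it does, with room to spare, for any $\delta \in (0,1)$ and all sufficiently large $n$. I would simply remark that this is ``standard,'' as the excerpt already signals, and present the three-line coupling argument above.
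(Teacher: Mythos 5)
Your proposal is correct and is exactly the paper's argument: the authors state that the corollary ``follows immediately from Lemma~\ref{lem:poisson_faces} and Theorem~\ref{thm:Hhit},'' and you have simply spelled out the routine coupling step showing that $\Prob(b_{d-1}=I)\to 1$ together with $I\Rightarrow\Poisson(e^{-c}/d!)$ yields the claimed convergence in law. No gaps.
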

Note that this follows immediately from Lemma~\ref{lem:poisson_faces} and Theorem~\ref{thm:Hhit}.

\subsection{The fundamental group}
\label{sec:pi1}

For the $2$-dimensional complex, a fair bit is known about the fundamental group $\pi_1(Y).$ Babson and the first two authors find the threshold for the fundamental group to be trivial~\cite{bhk11}.
\begin{theorem} [Babson--Hoffman--Kahle]
If
$p = n^{-\alpha}$ where $\alpha < 1/2$ then w.h.p. $\pi_1(Y)$ is a nontrivial word hyperbolic group.  If $p\geq n^{-1/2}\log(n)$ then $\pi_1(Y)$ is trivial.
\end{theorem}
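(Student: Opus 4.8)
The plan is to handle the two regimes by completely different methods; note that the spectral gap theorems of this paper apply only for $p \gtrsim \log n/n$ and so are of no use here, since $n^{-\alpha}$ with $\alpha<1$ lies far above the connectivity scale. I will write $v_0$ for a fixed base vertex. Since the $1$-skeleton of $Y$ is complete, $\pi_1(Y^{(1)},v_0)$ is free on the generators $g_{ij}$ represented by the triangular loops $v_0\to i\to j\to v_0$ with $i<j$, $i,j\neq v_0$, and each $2$-face of $Y$ imposes one relation among these generators.

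\textbf{Triviality for $p\geq n^{-1/2}\log n$.} Here I would show every $g_{ij}$ is killed by a cascade. A face $\{v_0,i,j\}\in Y$ kills $g_{ij}$ outright; more generally a face $\{i,j,k\}\in Y$ rewrites $g_{ij}$ in terms of $g_{ik}$ and $g_{jk}$. Call an edge $\{i,j\}$ \emph{good} if $g_{ij}$ can be reduced to the identity using the faces inspected so far; the faces through $v_0$ seed the process, and $\{i,j\}$ becomes good as soon as it lies in some face $\{i,j,k\}$ with $\{i,k\}$ and $\{j,k\}$ already good. Each edge lies in about $np\geq n^{1/2}\log n$ faces, which is more than enough for the cascade to percolate; a two-round exposure together with an Azuma-type concentration argument shows that w.h.p.\ every edge eventually becomes good, so $\pi_1(Y)=1$.

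\textbf{Hyperbolicity for $p=n^{-\alpha}$, $\alpha<1/2$.} The engine is a \emph{linear isoperimetric inequality}: I would show that w.h.p.\ there is $\epsilon>0$ such that every reduced van Kampen diagram $D$ over $Y$ (viewed as a presentation $2$-complex) satisfies $|\partial D|\geq \epsilon\cdot\mathrm{Area}(D)$; by the Gromov/Dehn characterization of hyperbolic groups this yields word-hyperbolicity, finite presentability being automatic. The inequality is proved by combinatorial Gauss--Bonnet: assign each triangular $2$-cell three corners of angle $\pi/3$, so each cell contributes zero curvature, an interior vertex of degree $d$ contributes $2\pi-d\pi/3\leq 0$ once $d\geq 6$, and $\sum_{\text{corners}}(\text{curvature})=2\pi$ forces the positive curvature onto $\partial D$. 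The probabilistic input is that w.h.p.\ every subcomplex $S\subseteq Y$ of bounded size is \emph{sparse} — $f_2(S)$ is much smaller than $f_1(S)$ — which is a first-moment union bound over subcomplexes on a fixed vertex set, convergent precisely because $\alpha<1/2$; sparseness of the links of $Y$ (themselves distributed like $G(n,p)$ graphs) controls the degrees and short cycles at interior vertices of $D$, upgrading ``$\leq 0$'' to a uniform negative contribution and hence to the linear bound.

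\textbf{Nontriviality, and the main obstacle.} Since $n^{-\alpha}\gg \log n/n$ for $\alpha<1$, we have $H_1(Y;\Q)=0$ (indeed $H_1(Y;\Z_2)=0$) w.h.p.\ throughout this range, so nontriviality cannot be read off from homology and must be extracted geometrically. I would argue that w.h.p.\ there is a pair $i,j$ with $\{v_0,i,j\}\notin Y$ such that the triangular loop $v_0\to i\to j\to v_0$ bounds no van Kampen diagram at all: by the isoperimetric inequality any such diagram would have area $\leq 3/\epsilon=O(1)$, and then a direct first-moment estimate over all bounded-size triangulated disks realizable inside $Y$ with that boundary shows that for at least one choice of $i,j$ no filling exists, so $g_{ij}\neq 1$ in $\pi_1(Y)$. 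This last step — reconciling the geometric isoperimetric control with a delicate combinatorial count (delicate precisely because the relevant subcomplexes are small and numerous), together with making $\epsilon$ explicit and uniform over \emph{all} reduced diagrams, not merely those that embed in $Y$ — is the hardest part of the argument.
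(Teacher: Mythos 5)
This theorem is not proved in the paper at all: it is quoted as background from Babson--Hoffman--Kahle \cite{bhk11}, so there is no in-paper argument to compare your proposal against. Before anything else, note that the statement as printed has the inequality on $\alpha$ reversed, and as written its two halves contradict each other: if $\alpha<1/2$ then $p=n^{-\alpha}\gg n^{-1/2}\log n$, so the same complex would be both nontrivial and trivial. The correct regime for nontriviality and hyperbolicity is $\alpha>1/2$, i.e.\ $p\le n^{-1/2-\epsilon}$. You inherit the typo, and it propagates into a genuine error in your sketch: the first-moment union bound you invoke for sparseness of small subcomplexes converges precisely when $\alpha>1/2$, not $\alpha<1/2$. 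The expected number of subcomplexes on $v$ vertices with $f$ triangles is roughly $n^v\binom{\binom{v}{3}}{f}p^f$, and for triangulated disks $f\approx 2v$, so smallness requires $np^2\to 0$, i.e.\ $p\ll n^{-1/2}$. In the regime where you run the hyperbolicity argument the complex is w.h.p.\ simply connected, so no isoperimetric or nontriviality argument can succeed there.

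Modulo that sign, your outline does follow the strategy of \cite{bhk11} (linear isoperimetric inequality from local sparseness, nontriviality from a non-fillable $3$-cycle, triviality from an iterated contraction scheme), but the two steps you defer are the actual content of that paper. For hyperbolicity, van Kampen diagrams need not map injectively to $Y$, so sparseness of subcomplexes of $Y$ does not directly bound the area of a diagram by its boundary length; handling non-embedded diagrams is genuinely delicate (this precise point required a correction to the original argument), and your Gauss--Bonnet step also silently assumes no interior vertices of degree less than $6$, which again must be extracted from sparseness. For triviality, a ``two-round exposure plus Azuma'' cannot work: with $q$ the density of good edges, one round upgrades $q$ to roughly $npq^2$, so starting from $q_0\approx p=n^{-1/2}\log n$ you get $q_1\approx n^{-1/2}\log^3 n$ and $q_2\approx n^{-1/2}\log^7 n$, both $o(1)$; the cascade only saturates after order $\log\log n$ rounds, each of which re-exposes the same faces, so the dependency analysis is the heart of the matter rather than a routine concentration step. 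As written the proposal is a reasonable roadmap to the known proof, but with the regime error corrected it is not yet a proof.
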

Cohen et al. \cite{CCFK12} show that if
$p = o ( 1/n),$
then w.h.p.\ $\pi_1(Y)$ is free.  Finally, Costa and Farber describe the cohomological dimension $\cd \pi_1(Y)$ in various regimes \cite{CF12,CF13}.
\begin{theorem} [Costa--Farber] Let $Y \sim Y_2(n,p)$, and set $p = n^{-\alpha}$.
\begin{enumerate}
\item If $\alpha > 1$ then w.h.p.\ $\cd \pi_1(Y) = 1$,
\item if $1 > \alpha > 3/5$ then w.h.p.\ $\cd  \pi_1(Y)=2$, and
\item if $3/5 > \alpha > 1/2$ then w.h.p.\ $\cd \pi_1(Y) = \infty$.
\end{enumerate}
\end{theorem}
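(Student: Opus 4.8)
The plan is to handle the three ranges separately. For (1), when $\alpha>1$ we have $p=o(1/n)$, so by Cohen et al.\ \cite{CCFK12} the group $\pi_1(Y)$ is free w.h.p.; I would then note it is nontrivial w.h.p., since the number of $2$-faces $f_2(Y)$ concentrates around $\binom n3 p\sim\tfrac16 n^{3-\alpha}$, which is $o\bigl(\binom n2\bigr)$ because $\alpha>1$, so the Euler characteristic $\chi(Y)=n-\binom n2+f_2(Y)\to-\infty$ and hence $b_1(Y;\Q)=1-\chi(Y)+b_2(Y;\Q)\to\infty$, giving a surjection $\pi_1(Y)\twoheadrightarrow\Z$. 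By the Stallings--Swan theorem a group has cohomological dimension at most $1$ precisely when it is free, so $\cd\pi_1(Y)=1$.

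Next fix $\alpha\in(3/5,1)$, case (2). For the lower bound $\cd\pi_1(Y)\ge 2$ I would show that $\pi_1(Y)$ is not free, using two inputs: (i) since $p=n^{-\alpha}\gg 2\log n/n$, the Linial--Meshulam--Wallach theorem applied over $\Z/q$ for every prime $q$ (together with universal coefficients) gives $H_1(Y;\Z)=0$ w.h.p., so $\pi_1(Y)$ is perfect; and (ii) $\pi_1(Y)$ is nontrivial w.h.p.\ in this range. A nontrivial perfect group is not free, so $\cd\pi_1(Y)\ge 2$. For the upper bound $\cd\pi_1(Y)\le 2$, note first that once $\alpha<1$ the complex $Y$ already contains many embedded $2$-spheres --- boundaries of tetrahedra, of which there are of order $n^{4-4\alpha}\to\infty$, each representing a nonzero class in $\pi_2(Y)$ --- so $Y$ itself is never aspherical. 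The plan is instead to show that for $\alpha>3/5$ one may split these spheres off up to homotopy: $Y\simeq Y'\vee(\text{a wedge of }2\text{-spheres})$ with $Y'$ a two-dimensional \emph{aspherical} complex. Then $\widetilde{Y'}$ is contractible, its cellular chain complex is a free $\Z[\pi_1(Y)]$-resolution of $\Z$ of length $2$, and so $\cd\pi_1(Y)\le 2$; combined with the lower bound, $\cd\pi_1(Y)=2$.

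Both hard inputs of case (2) --- the aspherical model $Y'$ and the nontriviality of $\pi_1(Y)$ --- come out of a structural analysis of $Y$ in this range, and this is also where the threshold $\alpha=3/5$ is forced. I would establish the structure by a first-moment estimate showing that for $\alpha>3/5$ no minimal subcomplex carrying torsion in the image of its fundamental group can appear. The extremal such configuration is the $6$-vertex, $15$-edge, $10$-triangle minimal triangulation $\mathbb{RP}^2_6$ of the projective plane, whose expected number of copies in $Y$ is of order $n^6 p^{10}=n^{6-10\alpha}$, which tends to $0$ exactly when $\alpha>3/5$; all other torsion-carrying subcomplexes have a strictly smaller ratio of triangles to vertices and so are even rarer. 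The main obstacle is making this bound uniform over the a priori unbounded size of the offending subcomplex, and organising the combinatorics so that the same list of configurations controls both the asphericity of $Y'$ and the wedge splitting.

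Finally, case (3): for $\alpha\in(1/2,3/5)$ the expected number of copies of $\mathbb{RP}^2_6$, again of order $n^{6-10\alpha}$, tends to $\infty$, so a second-moment computation over the $6$-element vertex subsets of $Y$ that span a copy of $\mathbb{RP}^2_6$ while spanning none of the other $10$ triangles on those vertices shows that w.h.p.\ $Y$ contains such a copy $R$. A further first-moment estimate, eliminating the bounded list of small configurations consisting of $R$ together with a disk filling the generator of $\pi_1(R)$, lets one choose $R$ so that the inclusion induces an injection $\pi_1(R)=\Z/2\hookrightarrow\pi_1(Y)$. Since cohomological dimension is monotone under passage to subgroups and $\cd(\Z/2)=\infty$, this yields $\cd\pi_1(Y)=\infty$. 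Here the delicate point is again the $\pi_1$-injectivity of $R$ --- ruling out not merely the extra triangles on its six vertices but any small two-dimensional filling of its fundamental class --- and one must check that the relevant moment estimate stays valid throughout $\alpha<3/5$.
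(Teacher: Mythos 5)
This theorem is quoted in the paper as background, attributed to Costa and Farber, and the paper gives no proof of it; there is therefore no internal argument to compare against. Your outline does reproduce the strategy of the cited source: freeness (Cohen et al.) plus an Euler-characteristic count for $\alpha>1$; a decomposition of $Y$ up to homotopy into an aspherical $2$-complex wedge spheres, combined with Stallings--Swan, for $3/5<\alpha<1$; and a $\pi_1$-injectively embedded copy of the $6$-vertex triangulation of $\mathbb{RP}^2$ for $1/2<\alpha<3/5$, with the threshold $3/5$ correctly located by the first-moment count $n^6p^{10}=n^{6-10\alpha}$. The two steps you flag as the main obstacles --- making the first-moment elimination uniform over subcomplexes of unbounded size, and the $\pi_1$-injectivity of the embedded projective plane --- are exactly where all the work lies in Costa--Farber, so as written this is a correct plan rather than a complete argument. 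Note also that the nontriviality of $\pi_1(Y)$ for $\alpha>1/2$, which you use in both (2) and (3), is itself a substantial theorem (Babson--Hoffman--Kahle, quoted in Section 3 of this paper, where the displayed inequality $\alpha<1/2$ is a typo for $\alpha>1/2$); it should be cited, not asserted, and in particular the Euler-characteristic argument from case (1) cannot supply it here, since for $\alpha<1$ one has $f_2(Y)\gg n^2$ and $b_1(Y;\Q)=0$.

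One concrete misstep in case (2): you deduce $H_1(Y;\Z)=0$ by applying Linial--Meshulam--Wallach over $\Z/q$ ``for every prime $q$.'' Each fixed prime holds w.h.p., but you cannot union-bound over infinitely many primes, and whether $H_1(Y;\Z)$ is torsion-free in this regime is precisely the open problem the paper points out immediately after stating the LMW theorem. Fortunately perfectness is more than you need: to rule out $\pi_1(Y)$ being free it suffices that $\pi_1(Y)$ is nontrivial while its abelianization has rank zero, since a nontrivial free group has abelianization $\Z^r$ with $r\ge1$. Rank zero follows from $H^1(Y;\Q)=0$, which LMW over a single finite field plus universal coefficients already gives. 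With that substitution the lower bound $\cd\pi_1(Y)\ge2$ goes through.
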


For the $2$-dimensional complex, we combine the new spectral results with Garland's method to show a threshold theorem for $\pi_1(Y)$ to have property~(T).
A group $G$ is said to have property~(T) if every unitary action of $G$ on a Hilbert space that has almost invariant vectors also has a nonzero invariant vector.  The first explicit examples of expanders, due to Margulis, were constructed using Cayley graphs on quotients of (T) groups such as $SL(3,\Z)$ \cite{Margulis}.  Conversely, expansion properties of some graphs associated to the generating set of a group can imply property~(T) (see~\cite{Zuk}).

Property~(T) has found use in many different areas of mathematics.
For example, groups with property~(T) lead to good mixing properties in ergodic theory --- a process which mixes slowly must leave some subsets almost invariant. In particular, if a group $\Gamma$ has property~(T), then every ergodic $\Gamma$ system is also strongly ergodic \cite{glasner}.  See the monograph \cite{Bekka} for a comprehensive overview of property~(T).

We recall for convenience the statement of Theorem \ref{thm:structure}:
\begin{theorem*}
Suppose $\delta > 0$ is fixed,
$$ p \ge \frac{ ( 1 + \delta) \log n}{n},$$
and $Y \sim Y_2(n,p)$.
Then w.h.p.\ $\pi_1(Y)$ is isomorphic to the free product of a (T) group $G$, and a free group $F$, where the free group $F$ has one generator for every isolated edge in $Y$.
\end{theorem*}

Theorem \ref{thm:structure} might be viewed as a group-theoretic analogue of the fact that for $p \ge (1/2 + \delta) \log n / n$, the random graph $G \sim G(n,p)$ is w.h.p.\ a giant component, which is an expander, and isolated vertices.

We anticipate that the true threshold for $\pi_1(Y)$ being the free product of a free group and a nontrivial (T) group is much lower, and that it occurs in the range $p = \Theta(1/n).$  The significance of the threshold $\log n /n$ is that this is the threshold at which the free group is generated by isolated edges. 

For example, if $p = \delta \log n / n$ with $0 < \delta < 1$ fixed, then w.h.p.\ there exists a triangle $abc$ in $Y_2(n,p)$ such that edges $ab$ and $ac$ are not contained in any other triangle. In other words, the edge $bc$ is a connected component in the link of vertex $a$. In this case, the edge $ab$ and triangle $abc$ can be collapsed by an elementary collapse. This is a homotopy equivalence---after the collapse, the edge $ac$ is a generator of a free $\Z$ factor in $\pi(Y)$, but before the collapse there is no isolated edge generating this element of the group. 

On the other hand, this is also the point our argument in Section 8 ceases to apply. To apply \.Zuk's criterion in Section 8, we first delete all isolated edges and the resulting complex has connected vertex links with good expansion properties. In the case above, the vertex link $a$ is not connected, even after such deletions.

We have the following corollary of Theorem \ref{thm:structure}, which shows that the threshold for property~(T) is the same as the Linial--Meshulam theorem for vanishing of $\Z / 2$-homology.

\begin{corollary} \label{cor:Twin}
Let $\omega \to \infty$ as $n \to \infty$, and $Y \sim Y_2(n,p)$.
If $$p \ge \frac{2 \log{n} + \omega}{n}$$  then $\Prob[\pi_1(Y) \mbox{ has property~(T)}] \to 1$.  \label{T}
\end{corollary}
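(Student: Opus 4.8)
The plan is to read this off Theorem~\ref{thm:structure}: that theorem already describes $\pi_1(Y)$ as a free product $G \ast F$ with $G$ a property~(T) group, so all that remains is to check that in the stated (slightly narrower) range of $p$ the free factor $F$ is trivial with high probability, after which $\pi_1(Y) \cong G$ inherits property~(T) verbatim.

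First I would invoke Theorem~\ref{thm:structure} with a fixed choice of parameter, say $\delta = \tfrac12$. Since $p \ge (2\log n + \omega)/n \ge \tfrac32 \log n / n$ for all sufficiently large $n$, the hypothesis of that theorem is met, so w.h.p.\ $\pi_1(Y) \cong G \ast F$, where $G$ has property~(T) and $F$ is free on one generator for each isolated edge of $Y$. Hence it suffices to prove that w.h.p.\ $Y$ has no isolated edge: in that case $F$ is the trivial group and $\pi_1(Y) \cong G$.

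Next I would dispatch the isolated edges by a first-moment estimate. We may assume $\omega \le \log n$, since replacing $\omega$ by $\min(\omega, \log n)$ only weakens the hypothesis. An edge $\{i,j\}$ is isolated exactly when none of the $n-2$ triangles through it is a $2$-face of $Y$, an event of probability $(1-p)^{n-2}$; so if $J$ denotes the number of isolated edges, then
\[
\Exp[J] = \binom{n}{2}(1-p)^{n-2} \le n^{2} e^{-p(n-2)} = n^{2}\exp\!\big(-(2\log n + \omega) + o(1)\big) = e^{-\omega + o(1)} \longrightarrow 0 .
\]
By Markov's inequality $\Prob[J \ge 1] \le \Exp[J] \to 0$, so w.h.p.\ $Y$ has no isolated edge, which combined with the previous paragraph yields $\Prob[\pi_1(Y)\text{ has property (T)}] \to 1$.

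I do not expect a real obstacle here; the corollary is essentially the remark that a free product $G \ast F$ with $G$ a (T) group has property~(T) if and only if $F$ is trivial, coupled with the elementary fact that the isolated-edge threshold for $Y_2(n,p)$ lies at $p \sim 2\log n/n$. The only points needing care are aligning the two ranges of $p$ (fixing some $\delta \in (0,1)$ so that Theorem~\ref{thm:structure} applies for all large $n$) and the routine binomial tail bound above. For the complementary direction — that (T) fails for $p$ below the threshold — one would additionally use that a nontrivial free product acts on its Bass--Serre tree without a global fixed point and so cannot have property~(T); but this is not needed for the statement as given.
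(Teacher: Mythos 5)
Your proposal is correct and follows the paper's own route exactly: invoke Theorem~\ref{thm:structure} and then kill the free factor by showing the expected number of isolated edges, $\binom{n}{2}(1-p)^{n-2}\le n^2e^{-p(n-2)}$, tends to $0$ under the stated hypothesis. Your version is in fact slightly more careful than the paper's (fixing $\delta$ explicitly and handling the $n-2$ exponent), but the argument is the same.
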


We also describe a process version of this structure theorem that holds below the connectivity threshold. 



\begin{theorem} \label{thm:Thit}
Consider the random complex process $\{ \LMP \}$.  Let $\tilde F_t$ be a free group with the number of generators equal to the number of isolated edges in the complex $\LMP$.  Fix any $\delta >0$ and define $t_0$ so $p(t_0) = (1+\delta)\log n / n.$  Then w.h.p.\ for all $t \geq t_0,$
\[
\pi_1(Y_2(n,p(t))) \cong G_{t} * \tilde F_{t}
\]
where $G_t$ has property~(T).
\end{theorem}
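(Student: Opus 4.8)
The plan is to combine the process version of the spectral gap theorem for links (which is asserted to be Theorem~\ref{thm:stopping_time} / the content of Section~\ref{sec:proc}) with \.Zuk's criterion, in exactly the way Theorem~\ref{thm:structure} is proved from Theorem~\ref{thm:ergap_giant}, but carried uniformly over the process. Recall that \.Zuk's criterion says: if $Y$ is a pure $2$-complex such that every vertex link is connected and has normalized-Laplacian spectral gap $\lambda_2 > 1/2$, then $\pi_1(Y)$ has property~(T). The point of the theorem is that after time $t_0$ with $p(t_0) = (1+\delta)\log n/n$, the $2$-complex $\LMP$ decomposes (at the level of $\pi_1$) into a ``core'' on which \.Zuk applies, plus a free part coming from the isolated edges, and this holds \emph{simultaneously for all} $t \ge t_0$ w.h.p.

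First I would set up the link structure. For a vertex $v$ of $\LMP$, the link $\link(v)$ is a graph on the remaining $n-1$ vertices in which $\{x,y\}$ is an edge iff $\{v,x,y\}$ is a $2$-face of $Y$; each such edge is present independently with probability $p(t)$. So $\link(v) \sim G(n-1, p(t))$, and the edge present at time $t$ in the link is exactly the $2$-face $\{v,x,y\}$ present at time $t$ in the complex process. Thus the $n$ links evolve as $n$ coupled copies of the \ER process $G(n-1,m)$. An edge $\{x,y\}$ of $Y$ is \emph{isolated} exactly when it lies in no $2$-face, i.e. when it is an isolated vertex in $\link(x)$ (equivalently in $\link(y)$). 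Removing all isolated edges from the $1$-skeleton and keeping only faces/edges that touch a $2$-face, we obtain a subcomplex $Y'_t$ with $\pi_1(Y_2(n,p(t))) \cong \pi_1(Y'_t) * \tilde F_t$ (the free factor $\tilde F_t$ having one generator per isolated edge — this ``collapsing'' is the same elementary argument used for Theorem~\ref{thm:structure}). It remains to show $\pi_1(Y'_t)$ has property~(T) for all $t \ge t_0$ w.h.p.

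Next I would apply \.Zuk. For each vertex $v$ whose link in $Y'_t$ is nonempty, I need: (i) $\link_{Y'_t}(v)$ is connected, and (ii) its spectral gap exceeds $1/2$. For $p(t) \ge (1+\delta)\log n/n$ the link $G(n-1,p(t))$ is above the connectivity threshold, so w.h.p. (indeed with overwhelming probability, uniformly in $t\ge t_0$ after a union bound over the $n$ vertices and over a suitable discretization of time) each link is connected and, by Theorem~\ref{thm:stopping_time} applied to the link process, has $\lambda_2 = 1 - o(1) > 1/2$. Here the key quantitative input is that the spectral gap bound of the form $\lambda(\tilde G) < C/\sqrt d$ holds \emph{for all times past connectivity in the process}, with a failure probability small enough ($C\exp(-d^{1/4}\log n)$-type) that the union bound over $n$ links still leaves probability $\to 1$; this is precisely why Theorem~\ref{thm:stopping_time} is stated as a process result rather than just for a fixed $p$. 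One must also handle vertices all of whose incident edges are isolated (these contribute only to $\tilde F_t$ and carry no link constraint), and check that in $Y'_t$ every remaining edge is in a $2$-face, so the links of $Y'_t$ agree with the nonempty links of $Y_t$. Then \.Zuk gives that $G_t := \pi_1(Y'_t)$ has property~(T), and $\pi_1(Y_2(n,p(t))) \cong G_t * \tilde F_t$, as claimed.

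The main obstacle is the uniformity over the whole time interval $t \ge t_0$: we need the spectral gap and connectivity of every link to hold for \emph{all} $t$ simultaneously. The standard device is monotonicity — the link processes are monotone in $t$, so connectivity, once achieved, persists, and the spectral gap can be controlled at a polynomially-fine mesh of times $t_0 = s_0 < s_1 < \dots < s_K = \infty$ (or up to $p = 1$), with the gap at intermediate times squeezed between consecutive mesh values using that adding a bounded number of edges to $G(n-1,m)$ perturbs the normalized Laplacian spectrum by $o(1)$. Establishing that interpolation cleanly, together with assembling the failure probabilities so the union bound over $n$ vertices times $K$ mesh points still tends to $0$, is the technical heart; everything else (the $\pi_1$ collapse onto $Y'_t$, the application of \.Zuk, the identification of the free factor with isolated edges) is a routine adaptation of the proof of Theorem~\ref{thm:structure}.
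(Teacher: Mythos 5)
Your proposal follows essentially the same route as the paper: delete the isolated edges, apply the process spectral-gap theorem (Theorem~\ref{thm:stopping_time}) together with \.Zuk's criterion to conclude that $\pi_1$ of the reduced complex $\tilde Y_t$ has property~(T) uniformly in $t\ge t_0$, and then reattach the isolated edges via Seifert--van Kampen to produce the free factor (the time-uniformity you worry about is already packaged inside Theorem~\ref{thm:stopping_time}, so no extra discretization is needed at this stage). The only step you gloss over --- and where your parenthetical about vertices all of whose incident edges are isolated is slightly off, since such a vertex would leave $\tilde Y_t$ disconnected and spoil the one-generator-per-isolated-edge count --- is the verification that $\tilde Y_t$ remains connected; the paper closes this by showing the number of isolated edges at time $t_0$ is $o(n)$ and observing that removing fewer than $n-1$ edges from $K_n$ cannot disconnect it.
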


Note that Theorem~\ref{thm:structure} follows immediately from this.
As the number of isolated edges at time $t_0$ is positive w.h.p, we get the following hitting time corollary.

\begin{corollary} \label{cor:Thit}
Consider the random complex process $\{ Y_2(n,m) \}$. Let
$$M_1 = \min \{ m \mid Y_2(n,m) \mbox{ has no isolated edges} \},$$
and let
$$M_2 = \min \{ m \mid \pi_1( Y_2(n,m) ) \mbox{ is (T)} \}.$$
Then w.h.p.\ $M_1 = M_2$.
\end{corollary}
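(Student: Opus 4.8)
The plan is to derive the corollary from the process structure theorem, Theorem~\ref{thm:Thit}, together with one elementary deterministic observation in the reverse direction. Couple the edge-by-edge process $\{Y_2(n,m)\}$ with the Poisson-clock process $\{\LMP[t][2]\}$ in the usual way, so that $\LMP[t][2]$ sampled at its successive jump times reproduces $\{Y_2(n,m)\}$; then $M_1$ and $M_2$, being functions of the ordered sequence of $2$-faces, are defined in both. Since adding a $2$-face can only destroy, never create, an edge lying in no triangle, the property ``has no isolated edges'' is monotone in $m$, so $M_1$ is a genuine threshold: $Y_2(n,m)$ has an isolated edge exactly when $m<M_1$.

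First I would prove the deterministic inequality $M_2\ge M_1$. Let $Y$ be any complex on $[n]$ with complete $1$-skeleton that has an isolated edge $e=\{u,v\}$, and pick a third vertex $w$. The simplicial $1$-cycle $z=[v,w]-[u,w]+[u,v]$ has coefficient $\pm1$ on $e$, while the boundary of every $2$-face of $Y$ has coefficient $0$ on $e$ (as $\{u,v\}$ lies in no $2$-face); hence $kz\notin\operatorname{im}\partial_2$ for every $k\ne0$, so $[z]$ has infinite order in $H_1(Y;\Z)\cong\pi_1(Y)^{\mathrm{ab}}$. Since a group with property~(T) is finitely generated with finite abelianization, $\pi_1(Y)$ is not~(T). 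Applying this to each $Y_2(n,m)$ with $m<M_1$ gives $M_2\ge M_1$, and this is deterministic.

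For $M_2\le M_1$ I would invoke Theorem~\ref{thm:Thit}. Fix the constant $\delta\in(0,1)$ — shrinking $\delta$ only enlarges the time window in that theorem, so this is no loss — and let $t_0$ be as there, so $p(t_0)=(1+\delta)\log n/n$. On the high-probability event of Theorem~\ref{thm:Thit}, for every $t\ge t_0$ we have $\pi_1(Y_2(n,p(t)))\cong G_t*\tilde F_t$ with $G_t$ having property~(T) and $\tilde F_t$ free of rank equal to the number of isolated edges; since $G_t*\tilde F_t$ surjects onto $\Z$ whenever $\tilde F_t\ne1$, this group has~(T) if and only if $Y_2(n,p(t))$ has no isolated edges. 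Letting $m_0$ be the (random) number of $2$-faces present at time $t_0$, the index $m$ runs over all integers $\ge m_0$ as $t$ runs over $[t_0,\infty)$, so the last statement reads: w.h.p., for every $m\ge m_0$, $\pi_1(Y_2(n,m))$ has~(T) iff $m\ge M_1$. Finally I must check $m_0<M_1$ w.h.p.: the number of $2$-faces at time $t_0$ is $\Binom\!\big(\binom n3,p(t_0)\big)$, hence sharply concentrated, while at density $(1+\delta)\log n/n$ with $\delta<1$ the expected number of isolated edges is $\sim\tfrac12 n^{1-\delta}\to\infty$ and a standard second-moment estimate shows $Y_2(n,p(t_0))$ has an isolated edge w.h.p., so $m_0<M_1$ w.h.p. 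On the intersection of these events, combining with the deterministic direction for $m<m_0$ shows $\pi_1(Y_2(n,m))$ has~(T) iff $m\ge M_1$ for every $m$, i.e.\ $M_2=M_1$.

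The genuinely new input is entirely contained in Theorem~\ref{thm:Thit}; the only point needing care here is the bookkeeping of the last paragraph — translating the continuous time $t$ of Theorem~\ref{thm:Thit} into the face-count $m$ and verifying that the window $t\ge t_0$ already reaches past the vanishing of the last isolated edge, which is precisely why $\delta$ must, and may, be taken strictly below $1$.
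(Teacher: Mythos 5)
Your argument is correct and follows essentially the same route as the paper, which presents the corollary as an immediate consequence of Theorem~\ref{thm:Thit} together with the observation that isolated edges are still present w.h.p.\ at time $t_0$ (this is where $\delta<1$ is used, exactly as you note). You have simply made explicit the two points the paper leaves implicit: that an isolated edge forces $\pi_1$ to surject onto $\Z$ and hence fail property~(T) — which gives the deterministic inequality $M_2\ge M_1$ for all $m<M_1$, including those before the window of the structure theorem — and that a nontrivial free factor does the same, so that on the event of Theorem~\ref{thm:Thit} property~(T) holds exactly when the last isolated edge disappears.
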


\begin{remark}
We can additionally give an explicit Kazhdan pair for the (T) group.  Setting $S$ to be the canonical generating set based at vertex $1,$ i.e. all loops cycles of the form $1 \to x \to y \to 1$ for distinct vertices $x$ and $y,$ then $(S,\sqrt{2}(1-o(1)))$ is a Kazhdan pair (see Remark 5.5.3 of~\cite{Bekka}).
\end{remark}

\section{Spectral estimates} \label{sec:est}

In this section we give some conditions on an arbitrary graph $G$
on $n$ vertices which facilitate a large spectral gap.   Fix positive constants $C_1, C_2, C_3$ and $M$.
In this section $d$ can be any function of $n$ with $d=d(n) \geq 1,$ and this is always satisfied by $d=(n-1)p,$ the convention taken in other sections.

Recall that $T$ is the diagonal matrix of degrees.
Let $W$ denote the set of vertices $x$ for which $\deg x > 0$
and $I$ be the number of isolated vertices in the graph.
For any set of vertices $S,$ let $\one_{S}$ denote the vector that is one in every coordinate corresponding to $S$ and $0$ elsewhere.
Let $0= \lambda_1 \leq \lambda_2 \leq \cdots \leq \lambda_{n}$ be the eigenvalues of the normalized Laplacian $\EL[G],$ so that $\lambda_1 = \lambda_2 = \cdots = \lambda_{I+1} = 0.$
 We also define a set of vertices of small degree. 
 Let
\begin{equation} \label{the fuzz}
\fuzz = \{
v \in V ~:~
\deg(v) \leq d/M
\}.
\end{equation}


We now define four conditions that will ensure a spectral gap.
\begin{enumerate}
\item {\bf Bounded degree condition (b.d.c)} Every vertex has degree at most $C_1 d.$
\item {\bf Adjacency matrix} 
\[
\sup_{ \substack{ \|x\| = 1, x^t \one = 0 \\ \|y\| = 1 } } | x^tAy| \leq C_2\sqrt{d}.
\]
\item {\bf Fuzz} There are no edges between vertices of $\fuzz,$ $|\fuzz| \leq \frac{n}{2}$ and \[ \max_{u \in \fuzz^c} \dset{u}{\fuzz} \leq 1,\]
  where $e(U,V)$ denotes the number of edges between sets of vertices $U$ and $V.$
\item {\bf Parallel eigenspaces}  
\[
\sup_{ \substack{ \|x\| = 1, \\ x^t T^{1/2}\one_{W} = 0 } } | x^t T^{-1/2} \one_{\fuzz^c}| \leq C_3\frac{\sqrt{n}}{d}.
\]
\end{enumerate}
The final condition states that a vector $x$ that is orthogonal to the kernel of $L$ will not have such a large component in the direction of the principal eigenvector of $T^{-1/2}A$.  The vector $\one_{\fuzz^c}$ can be considered as a good approximation to this principal right eigenvector.  Otherwise said, the $0$--eigenspace of $L$ and the principal right eigenspace of $T^{-1/2}A$ are nearly parallel.

With these definitions we can now state our main result on spectral gaps.
\begin{lemma}
\label{lem:deterministic}
Let $G$ be a graph on $n$ vertices and let $C_1,C_2,C_3$ and $M$ be constants.
If $G$ satisfies the four conditions above
then there is a constant $C=C(C_1,C_2,C_3,M)$ so that
\[
\max_{i > I + 1} \left| 1 - \lambda_i\right| < \frac{C}{\sqrt{d}}.
\]
\end{lemma}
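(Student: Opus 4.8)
The plan is to bound $\max_{i>I+1}|1-\lambda_i|$ by reducing it to the adjacency-matrix estimate (condition 2) via the substitution $x \mapsto T^{-1/2}x$, exactly as sketched in Section~\ref{sec:gap_proof}, but now controlling the error terms that arise because $T^{-1/2}$ is no longer uniformly bounded (low-degree vertices in $\fuzz$ spoil this). Recall that for a connected component with vertex set $W_0$, $T^{1/2}\one_{W_0}$ is the eigenvector of $M=T^{-1/2}AT^{-1/2}$ with eigenvalue $1$; so $\max_{i>I+1}|1-\lambda_i|$ equals the operator norm of $M$ restricted to the orthogonal complement of $\operatorname{span}\{T^{1/2}\one_{W_j}\}$ over the nontrivial components $W_j$ — equivalently, by a variational argument, we want to show $|x^t M x| \le C/\sqrt d$ for all unit $x$ with $x^t T^{1/2}\one_{W_j}=0$ for every component. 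Since all these vectors are supported on $W$ and sum to $T^{1/2}\one_W$, it suffices (after a small argument handling the individual components versus their sum) to work with the single constraint $x^t T^{1/2}\one_W = 0$ appearing in condition 4, up to an $O(1/\sqrt d)$ error coming from the near-orthogonality of the component eigenvectors.

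First I would split $x$ according to the fuzz: write $x = x_{\fuzz} + x_{\fuzz^c}$ where $x_{\fuzz^c}$ is supported on vertices of degree $>d/M$. On $\fuzz^c$, $T^{-1/2}$ has norm at most $\sqrt{M/d}$, so setting $y = T^{-1/2}x_{\fuzz^c}$ we get $\|y\| \le \sqrt{M/d}\,\|x\|$. The key point is that $y$ is \emph{nearly} in the hyperplane $S=\{z:z^t\one=0\}$: indeed $y^t\one = x^t T^{-1/2}\one_{\fuzz^c}$, and condition 4 together with the constraint $x^t T^{1/2}\one_W=0$ bounds this by $C_3\sqrt n/d$. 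Projecting $y$ onto $S$ costs a correction of size $\le (C_3\sqrt n/d)\cdot \|\one\|/n = C_3/(d\sqrt n)\cdot\sqrt n$... more carefully, $|y^t\one|/\sqrt n \le C_3\sqrt n/(d\sqrt n)$, and using $\|A\|\le C_1 d$ from the bounded-degree condition (the all-ones direction of $A$ contributes at most the max degree) the resulting error in $|y^t A y|$ is $O(1/\sqrt d)\|x\|^2$ after a short computation. Then condition 2 gives $|y^t A y| \le C_2\sqrt d\,\|y\|^2 + (\text{proj.\ error}) \le C_2 M/\sqrt d\,\|x\|^2 + O(1/\sqrt d)$.

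Next I would handle the cross and fuzz-only terms. We have $x^t M x = (T^{-1/2}x)^t A (T^{-1/2}x)$, and expanding via $x = x_{\fuzz}+x_{\fuzz^c}$ produces, besides the $y^tAy$ term above, terms $(T^{-1/2}x_{\fuzz})^t A (T^{-1/2}x_{\fuzz^c})$ and $(T^{-1/2}x_{\fuzz})^t A (T^{-1/2}x_{\fuzz})$. Here condition 3 is essential: since there are no edges within $\fuzz$, the pure-fuzz term vanishes identically. For the cross term, each vertex $u\in\fuzz^c$ is adjacent to at most one vertex of $\fuzz$ (the $\dset{u}{\fuzz}\le 1$ bound), and each $v\in\fuzz$ has degree at most $d/M$; bounding this bilinear form by Cauchy--Schwarz using these sparsity facts — roughly $\sum_{u\sim v,\,v\in\fuzz} \frac{|x_u||x_v|}{\sqrt{\deg u}\sqrt{\deg v}}$ with at most one $v$ per $u$ — gives a bound like $\sqrt{M/d}\cdot\|x_{\fuzz^c}\|\cdot(\text{something})\|x_{\fuzz}\|$; the point is the $1/\sqrt{\deg u}\le\sqrt{M/d}$ factor forces this term to be $O(1/\sqrt d)\|x\|^2$ as well (here the degree-$\le d/M$ bound on $v$ cancels against $1/\deg(v)$). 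Collecting, $|x^t M x| \le C(C_1,C_2,C_3,M)/\sqrt d$.

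The main obstacle, I expect, is the bookkeeping around the constraint: condition 4 is stated with the single vector $T^{1/2}\one_W$, but $\max_{i>I+1}$ is the gap away from the \emph{entire} eigenvalue-$1$ eigenspace, spanned by one vector per nontrivial component. Showing that orthogonality to $T^{1/2}\one_W$ implies approximate orthogonality to each $T^{1/2}\one_{W_j}$ — or rather, that the extra eigenvalue-$1$ directions beyond the giant component are negligible — requires knowing those other components are small (their contribution to $\|T^{1/2}\one_W\|^2$ and to the quadratic form is lower order). This should follow from b.d.c.\ plus the fuzz conditions (small components must consist of low-degree vertices, hence are governed by condition 3), but making the constant explicit and uniform is the delicate part; everything else is Cauchy--Schwarz.
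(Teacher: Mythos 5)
Your proposal follows the paper's proof essentially step for step: the same splitting of $T^{-1/2}x$ into its fuzz and non-fuzz parts, the further projection of the non-fuzz part along the all-ones direction (controlled by condition 4 together with $\|A\|\le C_1 d$), the vanishing of the fuzz--fuzz term by independence of $\fuzz$, and the Cauchy--Schwarz bound on the cross term using $\dset{u}{\fuzz}\le 1$. The multi-component subtlety you flag at the end is real but is equally elided in the paper, which simply takes $\{T^{1/2}\one,\delta_{l_1},\dots,\delta_{l_I}\}$ as a basis of the kernel, i.e.\ implicitly assumes the non-isolated vertices form a single component.
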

\begin{proof}
Let $W$ be the set of vertices $x$ for which $\deg x > 0.$  By the spectral theorem, $\EL$ admits a basis of orthogonal eigenvectors.  Let $v$ be a normalized eigenvector of $\EL$ corresponding to an eigenvalue $\lambda_i$ with $i > I+1.$  Setting $l_1,l_2,\ldots, l_I$ to be the isolated vertices, a basis for the kernel of $\EL$ is given by $\{ T^{1/2}\one, \delta_{l_1},\delta_{l_2}, \ldots, \delta_{l_I}\},$ where $\delta_a$ is $1$ in the $a^{th}$ coordinate and $0$ elsewhere.  As $v$ is orthogonal to all of these, it is orthogonal to $T^{1/2}\one_{W}.$  Hence,
\[
\left| 1 - \lambda_i\right|
=
\left| v^t T^{-1/2} A T^{-1/2} v\right|
 \leq
\sup_{\substack{ \|x\| = 1, \\ x^tT^{1/2}\one_{W}=0}}
\left|
x^t T^{-1/2} A T^{-1/2} x
\right|.
\]
As this holds for all such $i > I+1,$ it suffices to bound the right hand side.

Orthogonally decompose $T^{-1/2}x = u + v,$ where $u$ is supported on vertices in $\fuzz^c$ and $v$ is supported on vertices in $\fuzz.$  Further decompose $u = u_0 + u_1$ by letting $u_1$ be the projection of $u$ along $\one_{\fuzz^c}.$
Expanding the quadratic form, we may write
\begin{equation} \label{decomposition}
\left|
x^t T^{-1/2} A T^{-1/2} x
\right|
\leq 2|u_0^tAu| + |u_1^tAu_1| + |v^tAv| + 2|v^tAu|.
\end{equation}
Each of these terms will be seen to have the right order bound, completing the proof.

As $u_0 \perp \one_{\fuzz^c}$ and is supported only on $\fuzz^c,$ we have that $u_0 \perp \one.$
By the definitions of $\fuzz$ and $x,$ we have that
\[
\|u_0\|^2 \leq \|u\|^2
= \sum_{i \in \fuzz^c} \frac{|x_i|^2}{\deg i}
\leq \frac{M}{d}.
\]
Hence by the adjacency matrix condition and the above equation 
we have that
\begin{equation} \label{partone}
|u_0^tAu| \leq C_2\sqrt{d} \|u_0\| \|u\| \leq \frac{C_2M}{\sqrt{d}}.
\end{equation}

As $u_1$ is the projection of $u$ along $\one_{\fuzz^c},$ we have
\[
u_1
= (u^t \one_{\fuzz^c}) \frac{ \one_{\fuzz^c}}{|\fuzz^c|}
= (x^t T^{-1/2} \one_{\fuzz^c}) \frac{ \one_{\fuzz^c}}{|\fuzz^c|}.
\]
Because $|\fuzz^c| \geq \frac{n}{2},$ the parallel eigenspaces condition implies that we have
\(
\|u_1\| \leq \frac{\sqrt{2}C_3}{d}.
\)
The norm of $A$ is at most the maximum degree of the graph, and by the bounded degree condition this is at most $C_1d.$  Hence, we get that
\begin{equation} \label{parttwo}
|u_1^tAu_1| \leq \frac{2C_1C_3^2}{d}
\end{equation}

For the third term, we note that by the $\fuzz$ condition there are no edges between vertices of $\fuzz,$ and hence
\begin{equation} \label{partthree}
v^tAv = 0.
\end{equation}

Finally, we may expand $v^tAu$ as
\[
v^tAu =
\sum_{i \in \fuzz} \frac{x_i}{\sqrt{\deg i}}
\sum_{\substack{ j\in \fuzz^c,\\ j \sim i}} u_j.
\]
By Cauchy-Schwarz, this is bounded by
\[
|v^tAu|^2 \leq
\sum_{i \in \fuzz} \frac{1}{{\deg i}}
\biggl(\sum_{\substack{ j\in \fuzz^c,\\ j \sim i}} u_j\biggr)^2
\leq
\sum_{i \in \fuzz}
\sum_{\substack{ j\in \fuzz^c,\\ j \sim i}}
\left(u_j\right)^2.
\]
Now each $j \in \fuzz^c$ has at most one neighbor in $\fuzz$, and hence we have
\begin{equation} \label{partfour}
\left|v^tAu\right| \leq \|u\| = \frac{\sqrt{M}}{\sqrt{d}}.
\end{equation}

Plugging (\ref{partone}), (\ref{parttwo}), (\ref{partthree}) and (\ref{partfour}) into (\ref{decomposition}) completes the proof. \end{proof}

In the remainder of this section we prove
a condition on a graph that will imply an upper bound on the spectral gap. This lemma shows that our previous argument breaks down when the set $\fuzz$ fails to be isolated.
\begin{lemma}
\label{lem:deterministic2}
Suppose that $H$ is a connected graph and that there are vertices $u,v,w,x$ for which the induced graph on $u,v,w,x$ is a path with endpoints $u$ and $x$.  Suppose further that $\deg v = \deg w = 2$  and $\deg u, \deg x \geq m.$ 
Let $0= \lambda_1 \leq \lambda_2 \leq \cdots \leq \lambda_{|H|}$ be the eigenvalues of the normalized Laplacian $\EL[H],$ then
\[
\lambda_{|H|} \geq \tfrac{3}{2}
\]
and
\[
\lambda_{2} \leq \tfrac{1}{2} + O(1/\sqrt{m})
\]
\end{lemma}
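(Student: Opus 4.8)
The plan is to produce both bounds by testing the Rayleigh quotient of $\EL[H]$ against explicit vectors supported near the path $u,v,w,x$. Write $M = T^{-1/2}AT^{-1/2}$; since $H$ is connected and has no isolated vertices, $\EL[H] = \operatorname{Id} - M$, the kernel of $\EL[H]$ is spanned by $T^{1/2}\one$, and $T^{1/2}\one$ is the unique (up to scalar) eigenvector of $M$ with eigenvalue $1$. Hence
\[
\lambda_{|H|} = \max_{f \neq 0}\Bigl(1 - \tfrac{f^tMf}{\|f\|^2}\Bigr),
\qquad
\lambda_{2} = 1 - \max_{\substack{f\neq 0\\ f\perp T^{1/2}\one}} \tfrac{f^tMf}{\|f\|^2},
\]
the latter being the Courant--Fischer description of the second-largest eigenvalue of $M$. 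The only structural facts I will use are that $M_{vw} = 1/\sqrt{\deg v\,\deg w} = \tfrac12$ (because $\deg v = \deg w = 2$) and that the only edges with both endpoints in $\{u,v,w,x\}$ are $uv$, $vw$, $wx$ (because the induced graph on these four vertices is a path).

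For the lower bound $\lambda_{|H|}\ge\tfrac32$, I would take $g = \delta_v - \delta_w$. Then $\|g\|^2 = 2$ and the only edge inside the support contributes $g^tMg = 2g_vg_wM_{vw} = -1$, so $g^t\EL[H]g = \|g\|^2 - g^tMg = 3$ and $\lambda_{|H|}\ge 3/2$. No orthogonality constraint enters, so this half is immediate.

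For the upper bound on $\lambda_2$, the idea is to use a test vector with $f_v$ and $f_w$ of the \emph{same} sign, so that the $vw$ edge contributes $+2f_vf_wM_{vw}$ with a favourable sign; the catch is that $\delta_v+\delta_w$ is not orthogonal to $T^{1/2}\one$, so I add a small correction at $u$ and $x$. Concretely, take
\[
f \;=\; \delta_v + \delta_w - \frac{\sqrt2}{\sqrt{\deg u}}\,\delta_u - \frac{\sqrt2}{\sqrt{\deg x}}\,\delta_x ,
\]
which is orthogonal to $T^{1/2}\one$ by inspection. Then $\|f\|^2 = 2 + \tfrac{2}{\deg u} + \tfrac{2}{\deg x} \le 2 + \tfrac4m$, and computing $f^tMf$ edge by edge (the $vw$ edge gives $+1$, and the $uv$ and $wx$ edges give $-2/\deg u$ and $-2/\deg x$) yields $f^tMf = 1 - \tfrac{2}{\deg u} - \tfrac{2}{\deg x}\ge 1 - \tfrac4m$. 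Thus $f^tMf/\|f\|^2 \ge (1-\tfrac4m)/(2+\tfrac4m) \ge \tfrac12 - O(1/m)$, so $\lambda_2 \le \tfrac12 + O(1/m)$, which is in fact stronger than the claimed $O(1/\sqrt m)$; for $m$ below an absolute constant the claim is trivial since $\lambda_2 \le 2$. (Equivalently, one may take $f$ to be the orthogonal projection of $\delta_v+\delta_w$ onto $(T^{1/2}\one)^{\perp}$ and use $MT^{1/2}\one = T^{1/2}\one$, the error then being controlled by $1/|E(H)| \le 1/m$.)

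The only real subtlety is the choice of test vector for $\lambda_2$: it must be orthogonal to the one-dimensional kernel while retaining the good sign on the $vw$ edge, and the hypothesis $\deg u,\deg x\ge m$ is precisely what makes the required orthogonality correction asymptotically negligible. Everything else is a one-line Rayleigh-quotient computation using that $\deg v = \deg w = 2$ and that $u,v,w,x$ induce a path.
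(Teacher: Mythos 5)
Your proposal is correct and follows essentially the same route as the paper: both bounds come from Rayleigh quotients of explicit test vectors supported on $\{u,v,w,x\}$, and your two test vectors are (up to scaling) exactly the ones used in the paper's proof. The only cosmetic difference is that you phrase the $\lambda_2$ bound via Courant--Fischer for $M=T^{-1/2}AT^{-1/2}$ and note the resulting $O(1/m)$ error, which is slightly sharper than the stated $O(1/\sqrt{m})$.
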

\begin{proof}
For each case, we construct an appropriate approximate eigenvector.  For the first, consider the vector $f$ with $f(v) = 1,$ $f(w)=-1$ and $f(y)=0$ for all other $y$.  This vector is orthogonal to $T^{1/2}\one$, the first eigenvector of $\EL.$
Now $T^{-1/2}f$ is just $f/\sqrt{2}$ while $f^tAf = -2.$  Thus,
\[
\frac{f^tT^{-1/2}AT^{-1/2}f}{\|f\|^2} = -\frac{1}{2},
\]
and so $\lambda_{|S|} \geq 1 - \tfrac{-1}{2} = \tfrac{3}{2}.$

For the lower bound let $f$ be given by $f(v) = f(w) = 1/\sqrt{2}$ while $f(x) = -1/\sqrt{\deg x}$ and $f(u) = -1/\sqrt{\deg u}.$  Then we have $f \perp T^{1/2}\one.$  By direct computation,
\[
{f^tT^{-1/2}AT^{-1/2}f} = \frac{1}{2} - \frac{1}{\deg x} - \frac{1}{\deg u},
\]
while
\[
\|f\|^2 \leq 1 + \frac{1}{\deg x} + \frac{1}{\deg u}.
\]
Thus, combining everything, we have that
\[
\lambda_{2} \leq 1- \frac{\tfrac12 - \tfrac{2}{m}}{\sqrt{1+\frac{2}{m}}}
= \tfrac{1}{2} + O(1/\sqrt{m}).
\]
\end{proof}

\section{Probability bounds} \label{sec:pbnds}

In this section we show various estimates on $G(n,p),$ which when combined with the deterministic lemmas on spectral gaps from the previous section, will complete the proofs of Theorems~\ref{thm:ergap_giant} and \ref{thm:ergap_giant2}. In this section we again use that $d=(n-1)p$ is the expected degree of a vertex. 

\begin{lemma}
\label{lem:regularity}
For each $\delta >0$ and $m \geq 0,$ there is a constant $C=C(\delta, m)$ so that the following conditions hold
with probability at least $1-C\exp(-md)$ and $\rsp$ respectively in $G(n,p)$ with $p \geq \delta \log n / n$.
\begin{enumerate}
\item {\bf Bounded degree condition (b.d.c)} Every vertex has degree at most $C d.$
\item {\bf Discrepancy} For every pair of vertex sets $A$ and $B$, letting $e(A,B)$ denote the number of edges between the sets and $\mu(A,B) = \tfrac{|A||B|d}{n}$, one of
\begin{enumerate}
\item
$
\tfrac{e(A,B)}{\mu(A,B)} \leq C
$
\item
$
e(A,B)\log \tfrac{e(A,B)}{\mu(A,B)} \leq C( |A| \vee |B|)\log \tfrac{n}{|A| \vee |B|}
$
\item
$
|A| \leq d^{1/4}/100, |B| \leq d^{1/4}/100
$
\end{enumerate}
occurs.
\end{enumerate}
\end{lemma}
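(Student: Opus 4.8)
The plan is to prove the two items separately, since they are essentially independent tail estimates for the random graph $G(n,p)$ with $d = (n-1)p$. For the \textbf{bounded degree condition}, each vertex degree is a $\Binom(n-1,p)$ variable with mean $d$, so I would apply the standard upper Chernoff bound: $\Prob[\deg(v) \geq Cd] \leq \exp(-c(C)d)$ for a constant $c(C) \to \infty$ as $C \to \infty$. Choosing $C = C(\delta,m)$ large enough that $c(C) \geq m+1$, a union bound over the $n$ vertices gives failure probability at most $n\exp(-(m+1)d)$. Since we are in the regime $d \geq (\frac12+\delta)\log n$ (or at least $d \geq 1$ as allowed), $n\exp(-d) \leq \exp(-d + \log n)$ is absorbed into $C\exp(-md)$ after adjusting the constant; the precise binomial tail I need is exactly the kind recorded in the appendix the paper promises, so I would cite that.

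The \textbf{discrepancy condition} is the substantive part, and it is a Chernoff-plus-union-bound argument over all pairs of vertex sets $(A,B)$, organized by the sizes $a = |A|$, $b = |B|$ (WLOG $a \le b$). Fix such a pair; then $e(A,B)$ is stochastically dominated by (and, if $A,B$ disjoint, equals) a sum of independent Bernoulli($p$) variables with mean $\mu = \mu(A,B) = abd/n$. For a target value $e = \lambda \mu$ with $\lambda > 1$, the upper Chernoff bound gives $\Prob[e(A,B) \geq \lambda\mu] \leq \exp(-\mu\,\psi(\lambda))$ where $\psi(\lambda) = \lambda\log\lambda - \lambda + 1 \asymp \lambda\log\lambda$ for large $\lambda$. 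The number of pairs $(A,B)$ with these sizes is at most $\binom{n}{a}\binom{n}{b} \leq \exp\big(a\log\frac{en}{a} + b\log\frac{en}{b}\big) \leq \exp\big(2b\log\frac{en}{b}\big)$. The union bound succeeds as long as the exponent $\mu\psi(\lambda)$ dominates $2b\log\frac{en}{b}$ with room to spare. I would split into cases exactly matching (a), (b), (c) in the statement: if $e(A,B)/\mu(A,B) \leq C$ we are in case (a) and there is nothing to prove; otherwise $\lambda = e/\mu$ is large, and $\mu\psi(\lambda) = \mu\cdot\lambda\log\lambda\,(1+o(1)) = e\log\lambda\,(1+o(1)) = e\log\frac{e(A,B)}{\mu(A,B)}(1+o(1))$, so the Chernoff exponent is essentially the left side of (b). Requiring this to beat $C'(a \vee b)\log\frac{n}{a\vee b}$ — the right side of (b) up to constants — is precisely what the union bound needs; and case (c), where both sets are tiny ($|A|,|B| \leq d^{1/4}/100$), is the exceptional regime where even the worst-case $\mu$ is so small that neither (a) nor (b) can be forced, so we simply allow it. Summing the failure probabilities over all sizes $a,b$ and all threshold values (a geometric or dyadic grid of $\lambda$ values suffices, costing only a polynomial factor) yields the claimed bound $1 - (1 - C\exp(-md^{1/4}\log n))$, i.e. failure probability $C\exp(-md^{1/4}\log n)$; the exponent $d^{1/4}\log n$ rather than $d$ is exactly the cost of not controlling the smallest sets, whose contribution to the union bound over $\binom{n}{a}^2$ terms is of order $d^{1/4}\log n$ when $a \sim d^{1/4}$.

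The main obstacle I expect is \emph{bookkeeping the case split cleanly} so that the three alternatives in the Discrepancy condition partition all pairs $(A,B)$ with the right constants — in particular verifying that whenever a pair avoids both (a) and (c), the Chernoff bound on $e(A,B)$ with exponent $\asymp e(A,B)\log\frac{e(A,B)}{\mu(A,B)}$ genuinely exceeds the entropy term $(|A|\vee|B|)\log\frac{n}{|A|\vee|B|}$ by a factor large enough to absorb the union bound and still leave $md^{1/4}\log n$ in the exponent. This requires tracking how the ``slack'' in the Chernoff exponent degrades as $|A\vee B|$ shrinks toward $d^{1/4}$, and choosing the constant $100$ (and $C$) to make the margins work uniformly. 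The rest — binomial tail bounds, Stirling estimates for $\binom{n}{k}$, and the final sum over a grid of sizes and thresholds — is routine and I would defer the exact binomial inequalities to the appendix.
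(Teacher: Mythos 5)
Your overall strategy --- binomial tail bounds for the degrees and for $e(A,B)$, followed by a union bound over pairs of sets organized by size, with alternative (c) carved out as the regime where the entropy of choosing the sets cannot be beaten, and with the boundary $|A|\vee|B|\sim d^{1/4}/100$ producing the residual $\exp(-m d^{1/4}\log n)$ failure probability --- is exactly the paper's. The bounded-degree part is routine and correct. One simplification worth noting: no dyadic grid over thresholds $\lambda$ is needed. For each pair $(A,B)$ one takes the single threshold $r=C\vee r_1$, where $r_1$ solves $\mu\, r_1\log r_1 = C'(|A|\vee|B|)\log\frac{n}{|A|\vee|B|}$; then failure of both (a) and (b) forces the single event $e(A,B)>r\mu$, to which one Chernoff bound applies.

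There is, however, one step that fails as written: the claim that the union bound closes because the Chernoff exponent, being at least the right-hand side of (b), dominates the entropy $a\log\frac{en}{a}+b\log\frac{en}{b}\leq 2b\bigl(1+\log\frac{n}{b}\bigr)$. The quantities $(|A|\vee|B|)\log\frac{n}{|A|\vee|B|}$ and $b\bigl(1+\log\frac{n}{b}\bigr)$ differ by an unbounded factor once $b\geq n/e$, and the comparison genuinely breaks there: for $|A|=1$ and $|B|=n/2$, say, the available Chernoff exponent is $\mu\, r\log r \asymp \frac{abd}{n}\,C\log C = O(d)=O(\log n)$, while the entropy of choosing $B$ is of order $n$. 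So the pure Chernoff-plus-union-bound argument cannot handle pairs in which the larger set exceeds $n/e$, and these must be disposed of separately and essentially deterministically, which is what the paper does: if $|A|\vee|B|\geq n/e$ and the bounded degree condition holds, then $e(A,B)\leq(|A|\wedge|B|)\cdot C_1 d$ gives $e(A,B)/\mu\leq C_1 e$, i.e.\ alternative (a) holds outright; and if both sets exceed $n/e$, violating (a) would require more than $nd$ edges in the entire graph, which is excluded by concentration of the total edge count. You need one of these observations to cover the large-set regime; with that added (and your acknowledged bookkeeping at the $d^{1/4}$ end carried out as in the paper), the proof closes.
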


Both of these bounds are consequences of tail bounds of binomial variables, and they are relatively standard in the literature (see, e.g. \cite{FKSz},\cite{FeigeOfek},\cite{CojaOghlan}).  This one differs in that we look for more control over the order of decay of the failure probability.

\begin{proposition}
\label{prop:adj}
For each $\delta >0$ and $m \geq 0,$ there is a constant $C=C(\delta, m)$ sufficiently large so that if
\(p \geq \delta \log n / n\)
then
\[
\sup_{ \substack{ \|x\| = 1, x^t \one = 0 \\ \|y\| = 1 } } | x^tAy| \leq C\sqrt{d}
\]
with probability at least $\rsp - C\exp(-md).$
\end{proposition}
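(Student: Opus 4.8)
The plan is to prove this via the Kahn--Szemer\'edi method, which is the standard route to bounding $\sup_{\|x\|=\|y\|=1,\, x\perp\one}|x^tAy|$ and is explicitly the approach the authors defer to Section~\ref{sec:KSz}. The starting point is the observation that $|x^tAy| = |\sum_{i,j} A_{ij} x_i y_j|$, where $A_{ij} - \Exp A_{ij}$ are centered; since $x\perp\one$, the contribution of the rank-one mean matrix $\Exp A = \tfrac{d}{n-1}(\one\one^t - I)$ essentially vanishes (the $\one\one^t$ term kills $x$, the diagonal correction is of order $d/n$ and negligible), so it suffices to control the centered bilinear form. First I would discretize: replace the supremum over the unit sphere by a supremum over an $\epsilon$-net $\mathcal{N}$ of size $\exp(O(n))$, at the cost of a constant factor, so the task reduces to showing that for each fixed pair $(x,y)$ in the net, $|x^t A y| \le C'\sqrt{d}$ with probability at least $1 - \exp(-\Omega(n))$ (more precisely failing only on an event of probability at most, say, $\exp(-10 n)$ so that the union bound over $|\mathcal{N}|^2$ pairs is harmless), and then patch in the desired failure probability $\rsp - C\exp(-md)$.

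The core of the argument is the standard split of the bilinear form into a ``light'' part and a ``heavy'' part relative to a threshold (this is what the macros \lc{} and \hc{} in the preamble anticipate): writing $x^tAy = \sum_{(i,j)\in \lc} A_{ij}x_iy_j + \sum_{(i,j)\in \hc} A_{ij}x_iy_j$, where a pair is ``light'' if $|x_iy_j| \le \sqrt{d}/n$ and ``heavy'' otherwise. For the light part, I would apply a Bernstein/Chernoff bound for the sum of independent bounded centered variables $A_{ij}x_iy_j$; the variance is at most $\tfrac{d}{n}\sum_{i,j} x_i^2 y_j^2 = d/n$ and each summand is bounded by $\sqrt{d}/n$, so Bernstein gives a tail of the form $\exp(-c\sqrt{d}\,t)$ at deviation $t\sqrt d$, which is $\exp(-\Omega(n\sqrt d /\sqrt d)) = \exp(-\Omega(n))$ at the relevant scale — comfortably enough for the union bound over the net. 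For the heavy part, one cannot use concentration for a single pair; instead one uses a deterministic \emph{counting/discrepancy} bound, showing that on the high-probability event furnished by Lemma~\ref{lem:regularity} (the Discrepancy condition, cases (a)--(c)), the heavy sum is at most $C''\sqrt d$ \emph{simultaneously for all} $x,y$ on the sphere. This is where the three-way alternative in Lemma~\ref{lem:regularity}(2) is used: grouping vertices into dyadic blocks according to the size of $|x_i|$ and $|y_j|$, bounding the heavy contribution of a block pair $(A,B)$ by $e(A,B)$ times the block weights, and using that either $e(A,B)$ is comparable to its expectation, or it satisfies the entropy-type bound (b) which exactly tames the logarithmic factor from summing over $O(\log^2 n)$ dyadic block pairs, or the blocks are tiny ($\le d^{1/4}/100$) in which case their total edge contribution is trivially small.

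The main obstacle I expect is the heavy-pair bookkeeping: making the dyadic decomposition of the sphere interact cleanly with the three cases of the Discrepancy condition, so that summing the per-block bounds over all $O(\log^2 n)$ pairs of blocks still yields a total of order $\sqrt d$ rather than $\sqrt d \log n$ or worse. This is the technically delicate heart of Kahn--Szemer\'edi, and getting the constant-order bound (as opposed to a spurious logarithm) requires carefully exploiting alternative (b), which is precisely shaped for this telescoping. The probability bookkeeping is comparatively routine: the failure event is the union of (i) the discrepancy/b.d.c.\ event from Lemma~\ref{lem:regularity}, contributing $1-(\rsp)$ and $C\exp(-md)$, and (ii) the net union bound for the light part, which is $\exp(-\Omega(n)) \le C\exp(-md)$ for $n$ large since $d = \Omega(\log n)$; combining gives the claimed probability $\rsp - C\exp(-md)$. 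One final check is that restricting from general $y$ with $\|y\|=1$ to the net and back does not require $y\perp\one$ — only $x\perp\one$ is needed to kill the mean — which is consistent with the statement.
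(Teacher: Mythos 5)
Your proposal is correct and follows essentially the same route as the paper: a net discretization, the Kahn--Szemer\'edi split into light and heavy couples at the threshold $\sqrt{d}/n$, a Chernoff/Bernstein-type bound (the paper cites a martingale argument from Feige--Ofek, which is equivalent in effect) for the light part, and the discrepancy alternatives of Lemma~\ref{lem:regularity} for the heavy part, with the same probability bookkeeping. The paper itself only sketches these steps by citation to Feige--Ofek and Friedman--Kahn--Szemer\'edi, so your outline is, if anything, slightly more explicit than the published proof.
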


This follows from the standard Kahn-Szemer\'erdi argument, and it is essentially proven in both Feige and Ofek~\cite{FeigeOfek} and the original Friedman, Kahn and Szemer\'erdi paper~\cite{FKSz}.  This version has a sharper estimate on the failure probability than~\cite{FeigeOfek}, which in turn follows from Lemma~\ref{lem:regularity}.  We will delay the proof of both this and the previous lemma to Section~\ref{sec:KSz}.

Additionally, the bounded degree condition is needed to make estimates about low degree vertices.  Recall the definition of $\fuzz$ from (\ref{the fuzz}).
We show that this set is both small and structurally very simple for sufficiently large $M.$

\begin{proposition}
\label{prop:key_structure}
For each $\delta >0$ and each $\epsilon >0,$ 
 there is an $M=M(\delta,\epsilon) > 1$
 such that for \(p \geq  (\tfrac 12 + \delta)\log n / n\), $G(n,p)$ satisfies:
\begin{enumerate}
\item $|\fuzz| < n/(100d)$
\item $\fuzz$ is an independent set,
\item and \( \max_{u \in \fuzz^c} \dset{u}{\fuzz} \leq 1\)
\end{enumerate}
with probability at least $1 - Cn\exp(-(2-\epsilon)d)- C\exp(-cn)$ for some absolute constant $c>0.$
\end{proposition}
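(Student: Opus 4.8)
The plan is to verify the three assertions by separate first--moment estimates, after choosing the constant $M=M(\delta,\epsilon)$ sufficiently large. The one input we need about an individual degree is the sharp lower--tail Chernoff bound for a binomial: since $\deg(v)\sim\Binomial(n-1,p)$ has mean $d=p(n-1)$, the tail estimates recorded in the appendix give
\[
\Prob[\deg(v)\le d/M]\le \exp\!\big(-\phi(1/M)\,d\big),\qquad \phi(x):=x\log x-x+1,
\]
with $\phi(1/M)=1-\tfrac{1}{M}-\tfrac{\log M}{M}\to 1$ as $M\to\infty$. I would fix $M$ large enough that $\phi(1/M)>1-\tfrac{\epsilon}{2}$; since $d\ge\delta\log n\to\infty$, a factor $\exp(\eta d)$ with $\eta:=1-\phi(1/M)$ as small as we please is then available to absorb the polynomial-in-$d$ prefactors appearing below.

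\textbf{The size bound.} For $|\fuzz|<n/(100d)$ I would union bound over vertex sets: if $|S|=k:=\lceil n/(100d)\rceil$ and $S\subseteq\fuzz$, then $e(S,S^{c})\le\sum_{v\in S}\deg(v)\le kd/M$, whereas $e(S,S^{c})\sim\Binomial(k(n-k),p)$ has mean $(1-o(1))kd$, so the same Chernoff bound gives $\Prob[e(S,S^{c})\le kd/M]\le\exp(-(1-\eta)kd)$. Therefore
\[
\Prob\big[\,|\fuzz|\ge n/(100d)\,\big]\le\binom{n}{k}\exp(-(1-\eta)kd)\le\exp\!\Big(k\big(\log(100ed)-(1-\eta)d\big)\Big),
\]
using $n/k\le 100d$. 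As $d\to\infty$ the bracketed quantity is eventually at most $-d/4$, and $kd\ge n/100$, so this probability is at most $\exp(-cn)$ for an absolute constant $c>0$ (the $\delta$'s cancel). When $d$ is of order $n$ one has $k=1$ and simply needs $\fuzz=\varnothing$, which fails with probability at most $n\exp(-(1-\eta)d)\le\exp(-c'n)$.

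\textbf{Conditions (2) and (3).} Once (2) holds $\fuzz$ is independent, and then (2) and (3) together are equivalent to the single statement that \emph{no two distinct vertices of $\fuzz$ lie within graph distance $2$}. I would bound the probability of a violation by the first moment over ordered pairs $v_1,v_2$ (an edge inside $\fuzz$) and over ordered triples $v_1,u,v_2$ (a common neighbour of two $\fuzz$-vertices). The only point requiring care is the correlation between $\deg(v_1)$ and $\deg(v_2)$: after conditioning on the edge $\{v_1,v_2\}$, resp.\ on $\{u,v_1\}$ and $\{u,v_2\}$, being present, the residual degrees of $v_1$ and $v_2$ are binomials on edge sets that overlap only in the single potential edge $\{v_1,v_2\}$, so conditioning further on that edge's state yields
\[
\Prob\big[\deg(v_1)\le d/M,\ \deg(v_2)\le d/M\mid\text{conditioned edges}\big]\le C\exp\!\big(-2\phi(1/M)\,d\big).
\]
Hence the expected number of bad edges is at most $\binom{n}{2}p\cdot C\exp(-2\phi(1/M)d)\le Cnd\exp(-2\phi(1/M)d)$, and the expected number of bad ``cherries'' is at most $n^3p^2\cdot C\exp(-2\phi(1/M)d)\le Cnd^2\exp(-2\phi(1/M)d)$. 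Since $2\phi(1/M)>2-\epsilon$, both are at most $Cn\exp(-(2-\epsilon)d)$ after the factors $d$ and $d^2$ are absorbed into $\exp\big((2\phi(1/M)-2+\epsilon)d\big)$, i.e.\ for all admissible $d$ after enlarging $C=C(\delta,\epsilon)$. A final union bound over the three bad events gives the stated probability $1-Cn\exp(-(2-\epsilon)d)-C\exp(-cn)$.

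\textbf{Expected main obstacle.} There is no conceptual difficulty; the work is in bookkeeping the exponents. The two places needing care are (i) the coupled choice of $M$ large enough that $2\phi(1/M)>2-\epsilon$, combined with using $d\to\infty$ to swallow the $d$ and $d^2$ prefactors, and (ii) the edge-conditioning argument showing that the degrees of two nearby vertices are only weakly dependent, so that each contributes a separate $\exp(-\phi(1/M)d)$ factor; together these are exactly what produce the exponent $2-\epsilon$ (rather than $1-\epsilon$) in the failure probability.
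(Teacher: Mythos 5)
Your proof is correct, and while the overall architecture matches the paper's (three separate first-moment bounds, sharp binomial lower-tail estimates, and the exponent $2-\epsilon$ arising from a product of two independent low-degree factors), your treatment of the dependence between the adjacency events and the low-degree events is genuinely different from, and cleaner than, the paper's. The paper conditions on the exact degrees $\deg v = d_1$, $\deg w = d_2$ first and then bounds the conditional probability of the required edges; this forces it to work with the tilted (non-uniform) conditional law of the neighborhood and to estimate ratios of binomial coefficients such as $\binom{n-2}{d_1-1}/\binom{n-2}{d_1}$, and in part (3) it must additionally intersect with the bounded-degree condition so that $d_1 \le \csubone d$ makes that ratio $O(d/n)$. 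You reverse the order of conditioning: fix the edge (or cherry) first, at cost $p$ (resp.\ $p^2$), and observe that the residual degrees of the two candidate low-degree vertices live on edge sets that are disjoint after conditioning on the single shared potential edge, so each contributes an independent factor $\exp(-\phi(1/M)d(1-o(1)))$. This avoids the binomial-coefficient computation entirely and removes the need for the b.d.c.\ in part (3), since the degree of the common neighbour $u$ never enters. Your bound on $|\fuzz|$ via the aggregate count $e(S,S^c)\sim\Binom(k(n-k),p)$ is also a mild variant of the paper's, which instead uses the $k$ independent restricted degrees $\dset{u_i}{S}$ with $S=\{u_{k+1},\dots,u_n\}$; both yield $\exp(-cn)$. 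The only bookkeeping points you should make explicit in a full write-up are the absorption of the $d$ and $d^2$ prefactors (which, as you note, requires $n\ge n_0(\delta,\epsilon)$ and is otherwise handled by enlarging $C$) and the degenerate case $n/(100d)\le 1$, which you already flag.
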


\begin{proof}

\noindent{\emph{ (i)}} We start by estimating the size of $\fuzz,$ which we do by a simple union bound.  Namely by symmetry we have
\begin{align*}
\Pr\left[
|\fuzz| \geq k
\right]
\leq
{n \choose k}
\Pr\left[
\deg u_i \leq d/M, 1 \leq i \leq k
\right].
\end{align*}
Let $S$ be the set of vertices $u_{k+1},\ldots,u_n,$ then we have
\[
\Pr \left[
\deg u_i \leq d/M, 1 \leq i \leq k
\right]
\leq
\Pr \left[
\dset{u_i}{S} \leq d/M, 1 \leq i \leq k
\right],
\]
which are now independent $\Binomial(n-k,p)$ variables.  Applying Lemma~\ref{magic}, we get
\begin{align*}
\log \Pr\left[
|\fuzz| \geq k
\right]
\leq k\left[
(1+ \log\frac{n}{k})-(d - kp) + \frac{d}{M}( 1 + \log(M))
\right].
\end{align*}
Setting $k = [n/(100d)],$ we may make $M$ sufficiently large that
\[
(1+ \log\frac{n}{k})-(d - kp) + \frac{d}{M}( 1 + \log(M)) \leq -\frac{d}{2}
\]
for all $n \geq n_0(\delta).$
Hence we have that $|\fuzz| < n/(100d)$ with probability at least $1 - O\exp(-cn)$ for some absolute constant $c>0.$

\noindent{\emph{ (ii)}} We begin by bounding the probability that there is an edge between any two vertices of $\fuzz.$  Note that we may assume that $d < n/100,$ lest $\fuzz = \emptyset$ by the previous bound.

From the union bound and symmetry, we have that
\[
\Pr \left[
\fuzz \text{ is not an independent set}
\right] \leq
n^2 \Pr \left[
v \in S, w \in S, \ebetween{v}{w}
\right].
\]
Thus it suffices to compute this probability, which we do by conditioning $\deg v = d_1$ and $\deg w = d_2.$  Note that the law of the neighborhood $N$ of $\{v,w\}$ under this conditioning is \emph{not} uniform over all such neighborhoods.  For a possible neighborhood $H$ of $\{v,w\},$ let $E(H)$ denote the number of edges in this neighborhood.  Then we have that
\[
\Pr \left[
N = H \middle\vert \deg v = d_1, \deg w = d_2
\right] = \frac{1}{Z} \left(\frac{p}{1-p}\right)^{E(H)},
\]
for a suitable normalization constant $Z.$

Thus, we have that
\begin{align*}
\Pr \left[
\ebetween{v}{w} \middle\vert \deg v = d_1, \deg w = d_2
\right]
&\leq \frac{
\Pr \left[
\ebetween{v}{w} \middle\vert \deg v = d_1, \deg w = d_2
\right]
}{
\Pr \left[
\noebetween{v}{w} \middle\vert \deg v = d_1, \deg w = d_2
\right]
} \\
&=
\frac{1-p}{p}
\frac{
{ n - 2  \choose d_1 - 1}
{ n - 2  \choose d_2 - 1}
}
{
{n - 2 \choose d_1}
{n - 2 \choose d_2}
}.
\end{align*}
As we consider only $d_1$ and $d_2$ that are less than $d / M,$ and as $d < n/100,$ we may bound this as $Cd/n$ for some absolute constant $C.$  It remains to estimate the probability that both $v$ and $w$ are in $\fuzz.$
Hence we have
\[
\Pr\left[
\deg v \leq d/M, \deg w \leq d/M
\right]
\leq
\Pr\left[
X \leq d/M
\right]^2,
\]
where $X \sim \Binom(n-2,p).$  Applying Lemma~\ref{magic}, we have that
\begin{equation}
\label{eq:cofuzz}
\Pr\left[
\deg v \leq d/M, \deg w \leq d/M
\right]
\leq
\exp\left[
-2d + \frac{2d}{M}( 1+ \log M + O(1) )
\right]
\end{equation}
Thus by adjusting $M$ to be sufficiently large, we have
\[
\Pr \left[
\fuzz \text{ is not an independent set}
\right] = O(nd\exp(-(2-\epsilon/2)d)) = O(n\exp(-(2-\epsilon)d)).
\]

\noindent{\emph{ (iii)}}  This follows in much the same way as the proof of (ii).  Here though, we require that the degrees of $\fuzz^c$ are not too large.  By Lemma~\ref{lem:regularity}, these degrees can be bounded by some $Cd$ with probability at least $1-O(\exp(-2d)),$ and so it suffices to assume it.
From the union bound and symmetry, we have that
\begin{multline*}
\Pr \left[
\exists u \in \fuzz^c~:~\dset{u}{\fuzz} \geq 2
~\cap~\bdc
\right] \\
\leq
n^3 \Pr \left[
u \in \fuzz^c, v \in \fuzz, w \in \fuzz, \ebetween{u}{v}, \ebetween{u}{w}~\cap~\bdc
\right].
\end{multline*}
Again we condition on the degrees $\deg u = d_1, \deg v = d_2,$ and $\deg w = d_3,$ and bound
\begin{align*}
\Pr \left[
\ebetween{u}{v}, \ebetween{u}{w}
\middle\vert \deg u = d_1, \deg v = d_2, \deg w = d_3
\right] \hspace{-2in}&\hspace{2in} \\
&\leq \frac{
\Pr \left[
\ebetween{u}{v}, \ebetween{u}{w}
\middle\vert \deg u = d_1, \deg v = d_2, \deg w = d_3
\right]
}{
\Pr \left[
\noebetween{u}{v}, \noebetween{u}{w}, \noebetween{v}{w}
\middle\vert \deg u = d_1, \deg v = d_2, \deg w = d_3
\right]
} \\
&=
\left(\frac{1-p}{p}\right)^2
\frac{
{ n - 3  \choose d_1 - 2}
{ n - 3  \choose d_2 - 1}
{ n - 3  \choose d_3 - 1}
+
\frac{p}{1-p}
{ n - 3  \choose d_1 - 2}
{ n - 3  \choose d_2 - 2}
{ n - 3  \choose d_3 - 2}
}
{
{n - 3 \choose d_1}
{n - 3 \choose d_2}
{n - 3 \choose d_3}
}.
\end{align*}
As before, we have $d_1$ and $d_2$ are less than $d / M.$  As we also require the $\bdc$ to hold, we may take $d_1 \leq \csubone d$ and as $d < n/100,$ we may bound this as $C \csubone d^2/n^2$ for some absolute constant $C.$

From~\eqref{eq:cofuzz}, we have that
\[
\Pr\left[
v \in \fuzz, w \in \fuzz
\right] = O(\exp(-(2-\epsilon/2)d)),
\]
and so we conclude that
\[
\Pr \left[
\max_{u \in \fuzz^c} \dset{u}{\fuzz} > 1
\right] = O(n\exp(-(2-\epsilon)d)).
\]

\end{proof}


Our next lemma shows that the variance of the degree distribution is not too much larger than its expectation.

\begin{lemma}
\label{lem:T_distortion}
For each fixed $\delta >0$ and $m \geq 0,$ there is a constant $C=C(\delta, m)$ sufficiently large so that if
\(p \geq \delta \log n / n\)
then
\[
\sum_{v \in V} \left(
\deg v - d
\right)^2
\leq Cnd.
\]
with probability at least $1-C\exp(-md).$
\end{lemma}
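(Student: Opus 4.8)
The plan is to prove the concentration bound $\sum_{v}(\deg v - d)^2 \le Cnd$ by a truncation-plus-union-bound argument, splitting vertices according to whether their degree is moderate or unusually large. First I would note that each $\deg v \sim \Binom(n-1, p)$, so $\Exp[(\deg v - d)^2] = (n-1)p(1-p) \le d$, whence $\Exp\bigl[\sum_v (\deg v - d)^2\bigr] \le nd$. The issue is that the summands are not independent (the $\deg v$ share edges), and more seriously a single atypically large degree can contribute a huge squared deviation, so a naive Markov/Chernoff bound on the sum with the required exponential failure probability $C\exp(-md)$ is not immediate. The remedy is to condition on the bounded degree condition (Lemma~\ref{lem:regularity}, item (1)): with probability at least $1 - C\exp(-md)$ every degree is at most $Cd$, so in particular $(\deg v - d)^2 \le C^2 d^2$ deterministically on that event. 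This caps the contribution of any single vertex and lets one work with a bounded-difference or Bernstein-type estimate.

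Next I would set $Z = \sum_v (\deg v - d)^2$ and bound its upper tail on the good event. The cleanest route is to view $Z$ as a function of the $\binom n2$ independent edge indicators: changing one edge indicator changes exactly two of the degrees by $1$, and on the bounded-degree event each such $|\deg v - d| \le Cd$, so each edge flip changes $Z$ by at most $O(d)$. A McDiarmid/Azuma bounded-differences inequality would then give a Gaussian tail, but the number of coordinates is $\binom n2$, so one must be careful that the variance proxy $\sum (\text{bounded difference})^2 \approx \binom n2 \cdot O(d^2)$ is too large for the target. Instead I would use a one-sided Bernstein-type (Freedman) martingale inequality exposing edges one at a time (or vertex by vertex), where the predictable quadratic variation is controlled by the actual fluctuations rather than the worst case: the per-step conditional variance of the increment is of order $p \cdot d^2 \cdot (\text{number of edges at that vertex})$, summing to $O(nd \cdot d \cdot p) = O(nd^2/n \cdot n) $-type bounds, which after bookkeeping yields that $Z \le Cnd$ except with probability $\exp(-cnd/d^2)$ — still not obviously small enough. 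The honest fix, and what I expect the paper does, is to avoid a sharp sub-Gaussian estimate altogether: since we only need to exceed the \emph{mean} $nd$ by a constant factor, apply Markov's inequality to a suitably tilted/truncated version, or more simply use independence of the degrees \emph{of a fixed linear-size vertex subset after revealing the rest}, exactly the conditioning trick already used in the proof of Proposition~\ref{prop:key_structure} (replacing $\deg u_i$ by $\dset{u_i}{S}$ for $S$ the complementary vertices, which are genuinely independent binomials).

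Concretely, the main argument I would run: partition $[n]$ into two halves $V_1, V_2$; for $v \in V_1$, the truncated degrees $\dset{v}{V_2}$ are i.i.d. $\Binom(|V_2|,p)$, so $\sum_{v\in V_1}(\dset{v}{V_2} - |V_2|p)^2$ is a sum of $n/2$ i.i.d. terms, each bounded by $C^2 d^2$ on the b.d.c. event and with mean $\le d$; a standard Chernoff/Bernstein bound for sums of independent bounded nonnegative random variables then gives this partial sum is $\le Cnd$ with probability at least $1 - \exp(-cnd/d^2) = 1-\exp(-cn/d \cdot d) $, and since $d = O(\log n)$ this is $1 - \exp(-\Omega(n))$, far better than $1 - C\exp(-md)$; symmetrically for $V_2$; then bound the full $\deg v$ in terms of $\dset{v}{V_1} + \dset{v}{V_2}$ and absorb cross terms via $(a+b)^2 \le 2a^2 + 2b^2$. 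Finally, take a union bound over this event, the two b.d.c.\ events, and pass from $|V_i|p$ to $d = (n-1)p$ (an $O(1)$ shift per vertex, contributing $O(n)$ total). The main obstacle is the dependence/large-deviation tension I flagged: one must choose a decomposition (here, the two-halves independence trick borrowed from the earlier proof) for which the summands are genuinely independent and bounded, so that an elementary Bernstein inequality delivers a failure probability that comfortably beats $C\exp(-md)$; getting the constants and the $|V_i|p$ versus $d$ discrepancy to line up is routine once that structure is in place.
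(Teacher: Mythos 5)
Your final plan --- split $V$ into two halves and exploit independence of the cross-degrees --- does not actually decouple the sum. For $v \in V_1$ you have $\deg v = \dset{v}{V_1} + \dset{v}{V_2}$, and while the variables $\dset{v}{V_2}$ for $v \in V_1$ are indeed independent, the within-half degrees $\dset{v}{V_1}$ for $v\in V_1$ are exactly the degree sequence of a $G(|V_1|,p)$ and are just as dependent as the original $\deg v$; ``symmetrically for $V_2$'' controls $\sum_{v\in V_2}(\dset{v}{V_1}-|V_1|p)^2$, which is a different quantity, and recursing on the halves does not rescue you because the factor $2$ from $(a+b)^2\le 2a^2+2b^2$ compounds over $\log_2 n$ levels of recursion. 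Separately, even for the genuinely independent cross-part, truncating each summand at $(Cd)^2$ via the b.d.c.\ and applying Bernstein gives a tail exponent of order $nd/d^2 = n/d$, which beats the required $md$ only when $d \lesssim \sqrt{n/m}$; the lemma assumes only a \emph{lower} bound on $p$, so $d$ may be as large as $n-1$, and your parenthetical ``since $d = O(\log n)$'' imports a hypothesis that is not in the statement (the intermediate arithmetic $\exp(-cnd/d^2)=\exp(-cn/d\cdot d)$ is also garbled).

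The paper avoids all of this by observing that $\sum_v(\deg v - d)^2 = \|(A-dI)\one\|^2$ and writing $\|(A-dI)\one\| = \sup_{\|x\|=1}|x^t(A-dI)\one|$, then decomposing $x = v + c\one$ with $v\perp\one$ and $|c|\le 1/\sqrt n$. The component orthogonal to $\one$ is dispatched by Proposition~\ref{prop:adj}: $|v^tA\one|\le C\sqrt d\,\|\one\| = C\sqrt{nd}$. The $\one$-component reduces to $\one^t(A-dI)\one = \sum_v \deg v - nd$, twice a centered binomial in the total edge count, which concentrates at scale $mn\sqrt d$ by Chernoff; multiplying by $|c|\le 1/\sqrt n$ gives $O(\sqrt{nd})$, and squaring yields $Cnd$. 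In other words, the dependence among the degrees is absorbed into the already-proven operator-norm bound for $A$ on $\one^\perp$ rather than attacked directly. If you want a self-contained combinatorial proof you would need a decomposition that also decouples the diagonal blocks (or a Hanson--Wright-type inequality for the degree-four polynomial $\|A\one-d\one\|^2$), not the two-halves split.
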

\begin{proof}
Note that this sum is the square Euclidean norm of the vector $(A-dI)\one.$  Further, it is possible to write the norm as
\[
\| (A-dI)\one \| = \sup_{\|x\|=1} |x^t(A-dI)\one|.
\]
For any fixed vector $x$, we orthogonally decompose it as $x = v + c\one,$ where $|c| \leq 1/\sqrt{n}.$  We have that $v^t(A-dI)\one = v^tA\one,$ and so by Proposition~\ref{prop:adj}, for any $m$ there is a constant $C$ so that
\[
\sup_{\substack{\|v\|=1 \\ v^t\one = 0}} |v^tA\one| \leq C\sqrt{nd}
\]
with probability at least $1-O(\exp(-md)).$  It remains to bound $\one^t(A-dI)\one,$ which is
\[
\one^t(A-dI)\one = \left(\sum_{v \in V} \deg v\right) - nd.
\]
Note that $\sum_{v \in V} \deg v \sim 2 \Binom( {n\choose 2}, p),$ and so by standard Chernoff bounds, we have that
\[
\Pr \left[
\left|\one^t(A-dI)\one \right| \geq t
\right] \leq C\exp( - \frac{t^2}{Cnd} )
\]
for some absolute constant $C$ and all $t \leq nd.$  By taking $t = mn\sqrt{d},$ we have that $\left|\one^t(A-dI)\one \right| \leq mn\sqrt{d}$ with probability at least $1-O(\exp(-mn))$ for sufficiently large $n.$  Recalling that $|c| \leq 1/\sqrt{n},$ we have that
\[
\left|c\one^t(A-dI)\one\right| = O(\sqrt{nd}).
\]
which completes the proof.
\end{proof}
Using the previous lemma, we show that $T^{-1/2}$ tends to map the orthogonal complement of the first eigenvector of $M$ to the approximate orthogonal complement of the first eigenvector of $A$.

\begin{lemma}
\label{lem:T_flattens}
Let $W$ be the set of vertices $x$ for which $\deg x > 0,$
and let $\fuzz$ be as in Proposition~\ref{prop:key_structure}.
For each $\delta >0$ and $m \geq 0,$ there is a constant $C=C(\delta, m)$ sufficiently large so that if
\(p \geq \delta \log n / n\)
then
\[
\sup_{ \substack{ \|x\| = 1, \\ x^t T^{1/2}\one_{W} = 0 } } | x^t T^{-1/2} \one_{\fuzz^c} | \leq C\frac{\sqrt{n}}{d}
\]
with probability at least $1-C\exp(-md).$
\end{lemma}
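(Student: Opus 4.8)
The quantity to bound is $\sup | x^t T^{-1/2}\one_{\fuzz^c}|$ over unit vectors $x$ orthogonal to $T^{1/2}\one_W$. The plan is to reduce this to estimates on $\|T^{-1/2}\one_{\fuzz^c}\|$ together with the fact that $T^{1/2}\one_W$ and $T^{-1/2}\one_{\fuzz^c}$ are nearly parallel once we project off the high-variance directions. Write $g = T^{-1/2}\one_{\fuzz^c}$ and $h = T^{1/2}\one_W$. Since $x \perp h$, the supremum in question equals the length of the component of $g$ orthogonal to $h$, i.e.
\[
\sup_{\|x\|=1,\, x\perp h} |x^t g| = \left\| g - \frac{(g^t h)}{\|h\|^2}\, h \right\|.
\]
So the task becomes: show that $g$ is, up to an error of order $\sqrt{n}/d$, a scalar multiple of $h$.

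First I would compute the relevant inner products and norms. We have $g^t h = \sum_{v \in \fuzz^c} 1 = |\fuzz^c|$, since the $T^{-1/2}$ and $T^{1/2}$ cancel on the common support $\fuzz^c$ (all of which lies in $W$). Also $\|h\|^2 = \sum_{v\in W}\deg v = \one^t(A\one)$, which is $nd(1+o(1))$ with overwhelming probability by the Chernoff bound appearing in the proof of Lemma~\ref{lem:T_distortion}. The main term is then $\frac{|\fuzz^c|}{nd}\, h$, a vector whose $v$-th coordinate is $\frac{|\fuzz^c|}{nd}\sqrt{\deg v}$ for $v \in W$; meanwhile $g$ has $v$-th coordinate $1/\sqrt{\deg v}$ for $v \in \fuzz^c$ and $0$ otherwise. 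The difference of these two vectors, restricted to $\fuzz^c$, has $v$-th coordinate
\[
\frac{1}{\sqrt{\deg v}} - \frac{|\fuzz^c|}{nd}\sqrt{\deg v} = \frac{1}{\sqrt{\deg v}}\left(1 - \frac{|\fuzz^c|\deg v}{nd}\right) \approx \frac{1}{\sqrt{\deg v}}\cdot\frac{d - \deg v}{d},
\]
using $|\fuzz^c| = n(1+o(1))$ from Proposition~\ref{prop:key_structure}(i). Squaring and summing, and using that $\deg v \geq d/M$ on $\fuzz^c$ so $1/\deg v \leq M/d$, this contribution is at most $\frac{M}{d^3}\sum_{v}(d - \deg v)^2 \leq \frac{M}{d^3}\cdot Cnd = O(n/d^2)$ by Lemma~\ref{lem:T_distortion}. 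Taking square roots gives $O(\sqrt{n}/d)$, which is exactly the target bound.

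I would then clean up the remaining pieces: the $\fuzz$-coordinates of the main-term vector $\frac{|\fuzz^c|}{nd}h$ contribute $\frac{|\fuzz^c|^2}{n^2 d^2}\sum_{v\in\fuzz}\deg v$, which is tiny since $|\fuzz| \leq n/(100d)$ and each such degree is $O(d/M \cdot M) = O(d)$ under the b.d.c., giving $O(1/d)$; and I would absorb the discrepancy between $\|h\|^2$ and $nd$ (and between $|\fuzz^c|$ and $n$) into the error using the same two cited lemmas, each of which holds with probability $1 - C\exp(-md)$ after adjusting constants. Assembling, the orthogonal component of $g$ has norm $O(\sqrt{n}/d)$ with probability at least $1 - C\exp(-md)$, as claimed. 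The step I expect to be most delicate is bookkeeping the interaction of the normalization error in $\|h\|^2$ with the coordinatewise estimate — one must be careful that replacing $\|h\|^2$ by its nominal value $nd$ does not introduce an error larger than $\sqrt{n}/d$ — but this is controlled because the relative error in $\|h\|^2$ is $O(1/\sqrt{d})$ and it multiplies a vector of norm $O(\sqrt{1/d})$, so the induced error is $O(\sqrt{n}/d)$ or better, consistent with the bound. All invocations (Lemma~\ref{lem:T_distortion}, Proposition~\ref{prop:key_structure}, the b.d.c.\ from Lemma~\ref{lem:regularity}) hold simultaneously with failure probability of the required form.
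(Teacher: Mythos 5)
Your proposal is correct and follows essentially the same route as the paper: both arguments show that $T^{-1/2}\one_{\fuzz^c}$ differs from (roughly) $d^{-1}T^{1/2}\one_W$ by a vector of norm $O(\sqrt{n}/d)$, with the key input being Lemma~\ref{lem:T_distortion} combined with $\deg v \geq d/M$ on $\fuzz^c$ and $|\fuzz| \leq n/(100d)$; the paper simply subtracts the explicit multiple $T^{1/2}\one/d$ and handles the $\fuzz$-coordinates via $x^tT^{1/2}\one_{\fuzz} = -x^tT^{1/2}\one_{\fuzz^c}$, whereas you compute the exact orthogonal projection onto $T^{1/2}\one_W$. (Only a cosmetic slip: the $\fuzz$-coordinate contribution to the squared norm is $O(n/d^2)$, not $O(1/d)$, but it is still within the target after taking square roots.)
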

\begin{proof}
As we have that $|\fuzz| < n/(100d)$ by Proposition~\ref{prop:key_structure}, it follows that
\[
|x^tT^{1/2}\one_{\fuzz}| \leq \| T^{1/2}\one_{\fuzz}\| \leq \sqrt{d|\fuzz|} = O\left(\sqrt{n}\right).
\]
Further, we have that
\(
x^tT^{1/2}\one_{\fuzz}
=
-x^tT^{1/2}\one_{\fuzz^c},
\)
and hence it suffices to show that
\[
\sup_{ \substack{ \|x\| = 1, \\ x^t T^{1/2}\one = 0 } }
\left|x^t( T^{-1/2} - T^{1/2}/d)\one_{\fuzz^c}\right|
 \leq C\frac{\sqrt{n}}{d}.
\]
Taking norms,
\[
\left|x^t( T^{-1/2} - T^{1/2}/d)\one_{\fuzz^c}\right|
\leq
\left\|( T^{-1/2} - T^{1/2}/d)\one_{\fuzz^c}\right\|.
\]
Squaring this norm, we get
\[
\left\|( T^{-1/2} - T^{1/2}/d)\one_{\fuzz^c}\right\|^2
=\sum_{v \in \fuzz^c}
\left(
\frac{1}{\sqrt{\deg v}} - \frac{\sqrt{\deg v}}{d}
\right)^2
\leq
\frac{M}{d^3} \sum_{v \in \fuzz^c} \left(
\deg v - d
\right)^2.
\]
Lemma~\ref{lem:T_distortion} completes the proof.
\end{proof}

\begin{pfofthm}{Theorem~\ref{thm:ergap_giant}}
  We show that we satisfy the conditions in Lemma~\ref{lem:deterministic}.
  In Lemma~\ref{lem:regularity}, we show the bounded degree condition.
  In Proposition~\ref{prop:adj}, we show the adjacency matrix condition.
  In Proposition~\ref{prop:key_structure}, we show the fuzz condition.
  Finally, in Lemma~\ref{lem:T_flattens}, we show the parallel eigenspaces condition.
  Summing the failure probabilities, the failure probability in Theorem~\ref{thm:ergap_giant} is the sum of $Cn \exp(-(2-\epsilon)d)$ from Proposition~\ref{prop:key_structure} and $C\exp(-md^{1/4}\log n)$ from Proposition~\ref{prop:adj}, with all other errors much smaller in magnitude.
  Without the condition that $p \geq (\tfrac 12 + \delta) \log n / n,$ for some $\delta>0,$ the failure probability in Proposition~\ref{prop:key_structure} is not in control.
\end{pfofthm}

We wish to now show the lower bounds for $\lambda(\tilde{G}).$  We will use Lemma~\ref{lem:deterministic2}, and this requires that we show:
\begin{proposition}
\label{prop:handles}
If $p = \omega(\sqrt{\log n}/n)$ and $p \leq \tfrac12\log n / n$ then with high probability, there are four distinct vertices $a,b,c,d$ in the giant component for which the degrees of $a$ and $d$ are at least $np/2,$ the degrees of $b$ and $c$ are $2,$ and the induced subgraph on $(a,b,c,d)$ is a path.
\end{proposition}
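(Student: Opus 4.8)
The plan is to find the required configuration by a second-moment argument. Fix the regime $p = \omega(\sqrt{\log n}/n)$ and $p \le \tfrac12 \log n/n$, and write $d = (n-1)p$, so that $d = \omega(\sqrt{\log n})$ and $d \le \tfrac12 \log n$. Call an ordered $4$-tuple $(a,b,c,d)$ of distinct vertices a \emph{handle} if $a\sim b \sim c \sim d$ is an induced path, $\deg b = \deg c = 2$ (so $b$'s only neighbors are $a,c$ and $c$'s only neighbors are $b,d$), and $\deg a, \deg d \ge np/2$. Let $X$ denote the number of such tuples. The probability that $b$ has exactly the neighbors $a,c$ is $p^2(1-p)^{n-3}$, and similarly for $c$; requiring additionally that $a \not\sim c$, $a\not\sim d$, $b\not\sim d$ costs only constant factors $(1-p)^{O(1)}$; and the events $\deg a \ge np/2$, $\deg d \ge np/2$ each have probability $1-o(1)$ by a Chernoff bound and are positively correlated with the edge $a\sim b$ and $c \sim d$ in the right direction (one can condition cleanly on the edges incident to $b,c$ and then the remaining edges at $a$ and $d$ are still essentially Binomial, with mean $\gg np/2$). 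This gives
\[
\Exp X = (1+o(1))\, n^4 p^4 (1-p)^{2n}.
\]
Since $(1-p)^{2n} = \exp(-2np + O(np^2)) = \exp(-2d + o(1))$ and $2d \le \log n$, we get $(1-p)^{2n} \ge n^{-1+o(1)}$, hence $\Exp X \ge n^{3+o(1)} p^4 = n^{-1+o(1)} d^4 \to \infty$ because $d = \omega(\sqrt{\log n})$ forces $d^4 = \omega(\log^2 n)$, which is $n^{o(1)}\cdot(\text{something}\to\infty)$; more carefully one checks $n^{3} p^4 (1-p)^{2n} \to \infty$ directly from $np = \omega(\sqrt{\log n})$ and $np \le \tfrac12\log n$.

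The main work is the variance bound: I would show $\Exp[X^2] = (1+o(1))(\Exp X)^2$, so that Chebyshev gives $X > 0$ w.h.p. For two handles $(a,b,c,d)$ and $(a',b',c',d')$, the correlation depends on how the vertex sets overlap. If the two degree-$2$ cores $\{b,c\}$ and $\{b',c'\}$ are disjoint, the defining small-neighborhood events are on essentially disjoint edge sets (the $(1-p)^{n-3}$ factors overlap in at most $O(1)$ edges), and the degree lower-bound events contribute only $1-o(1)$; summed over the $(1+o(1))n^8 p^8 (1-p)^{4n}$ such pairs this is $(1+o(1))(\Exp X)^2$. If $\{b,c\}$ and $\{b',c'\}$ share a vertex, or $b$ (or $c$) coincides with one of $a',d'$, the number of free vertices drops by at least one, so the contribution is $O(n^7 p^{\ge 6}(1-p)^{\ge 3n}) = O((\Exp X)^2 / (n p (1-p)^{n}))$; and $n p (1-p)^n = d\cdot e^{-d+o(1)} \to \infty$ since $d \le \tfrac12\log n$ makes $e^{-d} \ge n^{-1/2}$, so this is $o((\Exp X)^2)$. (One must be a little careful that forcing, e.g., $\deg b = 2$ when $b$ is also forced adjacent to vertices outside its own path is impossible, which only \emph{reduces} such terms — so the crude bound suffices.)

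Finally I would argue that a handle lies in the giant component. Since $\deg a \ge np/2 = \omega(1)$, the vertex $a$ is not isolated and, in the regime $p \ge \tfrac{1+\epsilon}{n}$ (which holds since $np = \omega(\sqrt{\log n})$), standard results on $G(n,p)$ say every vertex of degree $\omega(1)$ lies in the giant component w.h.p.; alternatively one can intersect the second-moment event with the w.h.p.\ event that all vertices of degree $\ge 3$ lie in the giant component (only tree-components and small structures lie outside), which covers $a$ and $d$, and then $b,c$ are in the same component as $a$. The step I expect to be the main obstacle is the variance computation: handling the dependence introduced by the degree lower-bound events $\deg a, \deg d \ge np/2$ simultaneously with the degree-exactly-$2$ constraints requires conditioning on the star around $b$ and $c$ and checking that the conditional law of the remaining edges at $a$ and $d$ is still concentrated well above $np/2$ — routine but the bookkeeping with the $(1-p)^{n}$ factors and the overlap cases is where errors creep in.
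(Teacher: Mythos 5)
Your overall strategy is the same as the paper's: a second-moment argument for the existence of the configuration, followed by a separate argument that vertices of degree at least $np/2$ cannot lie outside the giant component. However, two of your quantitative steps fail as written. First, in the expectation you multiply the event ``$b$ has exactly the neighbors $a,c$'' (probability $p^2(1-p)^{n-3}$) by the analogous event for $c$, which double-counts the edge $bc$: the path has three edges, so the correct factor is $p^3$, not $p^4$. This is not cosmetic. Writing $d=np$, your formula gives $\Exp X \asymp n^4p^4(1-p)^{2n} = d^4e^{-2d(1+o(1))}$, which tends to $0$ (and your assertion that $n^{-1+o(1)}d^4\to\infty$ is false, since $d^4\le (\log n)^4$); the corrected count gives $\Exp X \asymp n\,d^3e^{-2d}\ge d^3(1-o(1))\to\infty$, using $e^{-2d}\ge n^{-1}$ from $d\le\tfrac12\log n$. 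Second, your variance bound reduces the overlapping-pair contribution to the claim that $np(1-p)^n\to\infty$; but $np(1-p)^n = de^{-d(1+o(1))}\to 0$ since $d\to\infty$, so that comparison establishes nothing (it would make the cross terms dominate). The correct bookkeeping, which is what the paper does, is: any overlap involving a degree-$2$ vertex forces the two tuples to coincide (its neighborhood is fully determined), so the only non-diagonal overlaps share endpoints; these retain all factors $p^6(1-p)^{4n}$ and lose at least one factor of $n$, giving a ratio $O(1/n)$ against $(\Exp X)^2\asymp n^8p^6(1-p)^{4n}$, while the diagonal contributes $\Exp X = o((\Exp X)^2)$.

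The giant-component step also has a gap. Your fallback claim that w.h.p.\ every vertex of degree at least $3$ lies in the giant component is false for part of the stated range: for $np=\tfrac{1}{10}\log n$, the expected number of $K_{1,3}$ components is of order $n(np)^3e^{-4np}=n^{3/5+o(1)}\to\infty$, so w.h.p.\ there are degree-$3$ vertices outside the giant component. What the proposition actually needs, and what the paper proves, is that no vertex outside the giant component has degree as large as $np/2$. The paper's route is: (i) the set of non-giant vertices has size at most $ne^{-(1-\epsilon)np}$ w.h.p.\ (a union bound over cuts), and (ii) conditional on a vertex set $W$ being disconnected from its complement, the induced graph on $W$ is distributed as $G(|W|,p)$, so its maximum degree is at most $np/100$ except with probability about $n\exp(-c(np)^2)$; the hypothesis $p=\omega(\sqrt{\log n}/n)$ is used precisely to make $(np)^2=\omega(\log n)$ beat this union bound. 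Some argument of this kind must be supplied; it is not a citable off-the-shelf fact in the form you invoke it.
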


We first show by the second moment method that such four-tuples $(a,b,c,d)$ exist in the graph with high probability.  We then show that with high probability, the small components have maximal degree $o(np),$ and hence these four-tuples must have been part of the giant component.

\begin{lemma}
\label{lem:handles_exist}
Suppose that $p = \omega( 1 / n)$ and that $p \leq \tfrac12\log n / n.$
Then, with high probability, there are four-tuples $(a,b,c,d)$ for which the degrees of $a$ and $d$ are at least $np/2,$ the degrees of $b$ and $c$ are $2,$ and the induced subgraph on $(a,b,c,d)$ is a path.
\end{lemma}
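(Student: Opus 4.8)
The plan is to use the second moment method on the number $N$ of ordered four-tuples $(a,b,c,d)$ of distinct vertices such that $b$ and $c$ each have degree exactly $2$, the edges $ab$, $bc$, $cd$ are all present, and $\deg a, \deg d \geq np/2$. First I would compute $\mathbb{E}[N]$. For a fixed ordered tuple, the event that $ab,bc,cd$ are edges has probability $p^3$; conditioned on this, the event $\deg b = \deg c = 2$ means neither $b$ nor $c$ is adjacent to any of the remaining $n-4$ vertices (and $b\not\sim c$'s complement is already handled, and $b \not\sim d$, $c\not\sim a$ must also be imposed), which contributes a factor $(1-p)^{2(n-4)+O(1)} = e^{-2np(1+o(1))}$ since $np = \omega(1)$ but $np \leq \tfrac12 \log n$, so this factor is at least $n^{-1+o(1)}$. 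Finally, conditioned on all of this, $\deg a$ is $1$ plus a $\Binomial(n - O(1), p)$ variable with mean $\sim np = \omega(1)$, so $\Pr[\deg a \geq np/2] \to 1$ by the law of large numbers (Chernoff), and likewise for $d$. Hence each tuple contributes $\gtrsim p^3 e^{-2np(1+o(1))}$, and summing over the $\sim n^4$ tuples gives
\[
\mathbb{E}[N] \gtrsim n^4 p^3 e^{-2np(1+o(1))} = n \cdot (np)^3 \cdot n^{-2np(1+o(1))/\log n} \cdot e^{o(np)},
\]
which, using $np \leq \tfrac12\log n$, is at least $n^{1 - 1 + o(1)}(np)^3 = (np)^{3+o(1)} \to \infty$ since $np = \omega(1)$. (One must be slightly careful: when $np$ is genuinely of order $\log n$ the bound is $n^{o(1)}(np)^3$, still tending to infinity; when $np = \omega(\sqrt{\log n})$ but $np = o(\log n)$ it is even larger. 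Here the Lemma only assumes $p = \omega(1/n)$, so $\mathbb{E}[N]\to\infty$ always.)

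Next I would bound $\mathbb{E}[N^2] = \sum_{(a,b,c,d),(a',b',c',d')} \Pr[\text{both are good tuples}]$ and show it is $(1+o(1))\mathbb{E}[N]^2$. The dominant contribution comes from pairs of tuples that are vertex-disjoint (or share only high-degree vertices among $\{a,d\}$), for which near-independence holds after a standard inclusion–exclusion on the shared non-edges, giving exactly $(1+o(1))\mathbb{E}[N]^2$. The terms to control are those where the tuples overlap in the degree-$2$ vertices $b,c$ or share edges. Sharing a degree-$2$ vertex forces many coincidences (e.g.\ if $b = b'$ then since $\deg b = 2$ its two neighbors are determined, so $\{a,c\} = \{a',c'\}$), which collapses the number of such pairs by a factor of roughly $n^2$ while the probability goes up by only a bounded power of $n$; since $\mathbb{E}[N] \to \infty$, these are lower order. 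The remaining overlap cases (sharing high-degree vertices, sharing an edge among the path edges) are handled the same way: the combinatorial loss always beats the probabilistic gain because $\mathbb{E}[N]$ diverges. Then Chebyshev / Paley--Zygmund gives $\Pr[N > 0] \to 1$.

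The main obstacle I expect is the bookkeeping in the second moment: one has to enumerate the overlap patterns between two tuples and, for each, verify that the ratio of that pattern's contribution to $\mathbb{E}[N]^2$ tends to $0$, paying particular attention to the $(1-p)^{2n}$-type factors — these are the delicate part, since a careless estimate could lose a factor $e^{\Theta(np)} = n^{\Theta(1)}$ and break the argument. The cleanest way to organize this is to fix a target ``skeleton'' (which vertices coincide, which of the path edges coincide) and observe that each coincidence of a degree-$2$ vertex both (a) removes at least one free choice among the $n^4$ tuple-choices and (b) does not create an uncontrolled boost in probability, because the non-adjacency constraints defining $\deg b = \deg c = 2$ are either reused (saving probability for the union) or add new constraints (decreasing probability). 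Once that monotonicity is in hand the estimate $\mathbb{E}[N^2] = (1+o(1))\mathbb{E}[N]^2$ follows, and the lemma is proved.
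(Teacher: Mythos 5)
Your proposal is correct and follows essentially the same route as the paper: a second moment argument for the count of such four-tuples, with first moment $\Theta\bigl(n(np)^3e^{-2np}\bigr)=\omega(1)$ thanks to $np\le\tfrac12\log n$, and the key structural observation that a shared degree-$2$ vertex forces the two tuples to coincide, so that all overlapping configurations contribute $o\bigl((\Exp N)^2\bigr)$. (One cosmetic point: in this regime $np^2=o(1)$, so $(1-p)^{2(n-4)}=e^{-2np}(1+o(1))$ exactly, and the $n^{o(1)}$ worry you flag in the first moment does not actually arise.)
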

\begin{proof}
Define the pair of events
\begin{align*}
A(a,b,c,d) &=
\{
a \leftrightarrow b \leftrightarrow c \leftrightarrow d, \deg b = \deg c = 2
\}~\text{ and } \\
B(a,b,c,d) &= A(a,b,c,d) \cap
\{
\deg a \geq np/2, \deg d \geq np/2
\}.
\end{align*}
Set $S$ to be the number of occurrences of $B,$ i.e.
\[
S = \sum_{ a,b,c,d } \one[B(a,b,c,d)],
\]
with the sum over ordered $4$-tuples of distinct vertices $(a,b,c,d).$
We need to show that $S > 0$ with high probability.

The probability of $A$ can be explicitly calculated as
\[
\Pr \left[
A(a,b,c,d)
\right] = p^3(1-p)^{2(n-3)}.
\]
Meanwhile, conditional on $A(a,b,c,d),$ the probability of $B(a,b,c,d)$ is exactly the probability of having two specific vertices of degree at least $np/2 - 1$ in $G(n-2,p).$  Set $Q = \Pr\left[ X \geq np/2 \right]$ where $X\sim \Binom(n,p).$  Note that as $np \to \infty,$ we have that $Q=1-o(1).$

Furthermore, as $np \to \infty$ we have that
\[
\Pr \left[
B(a,b,c,d) ~\middle\vert~ A(a,b,c,d)
\right] = Q^2(1-o(1)),
\]
simply by conditioning on the edge between $a$ and $d.$
By summing over all possible tuples, it follows that
\(
\Exp S = \Theta( nQ^2(np)^3e^{-2np} ) = \omega(1).
\)
\newcommand{\bvec}{
(b_i)_{i=1}^4
}
\newcommand{\avec}{
(a_i)_{i=1}^4
}

For the variance of $S$, we need to compute probabilities of the pairs $B( \avec ) \cap B( \bvec).$  Note that if $a_2 = b_2$ then the only way both can happen is if $a_i = b_i$ for all $i \in [4].$  Analogous conclusions hold if $a_2 = b_3$ or if $a_3 \in \{b_2,b_3\}.$   Thus, the only nontrivial way for the events $B( \avec)$ and $B( \bvec)$ to intersect is if
\begin{enumerate}
\item all $a_i$ and $b_i$ are distinct,
\item $a_1 = b_1$ and the rest are distinct,
\item $a_1 = b_1,$ $a_4=b_4,$ and the rest are distinct, or
\item $a_i = b_i$ for all $i.$
\end{enumerate}
Note that there's no need to consider $a_1 = b_4,$ as the event $B( \bvec )$ is preserved under reversing the $a_i.$  Likewise, there's no need to consider $a_4=b_4,$ as one can reverse both $a_i$ and $b_i.$  Set $T_i$ to be the pairs of tuples satisfying each of the $4$ cases.

If the pair is in $T_1,$ then
\[
\Pr \left[
B( \avec) \cap
B( \bvec)
\middle\vert
A( \avec) \cap
A( \bvec)
\right]
=Q^4(1-o(1))
\]
as once more, this is the statement that four vertices in $G(n-4,p)$ have degree at least $(np/2-1)$.  We also have that
\[
\Pr \left[
A( \avec) \cap
A( \bvec)
\right] = p^6(1-p)^{4n-16},
\]
so that
\[
\Pr \left[
B( \avec) \cap
B( \bvec)
\right] =
\Pr \left[
B( \avec)
\right]^2(1-o(1)).
\]
Thus the contribution of the pairs in $T_1$ to the variance of $S$ is $o( (\Exp S)^2).$

For terms from $T_2,$ the same reasoning as above shows that
\[
\Pr \left[
B( \avec) \cap
B( \bvec)
\right] = Q^3p^6(1-p)^{4n}(1-o(1))
\]
For such pairs, however, we have that $|T_2| = \Theta(n^7),$ and hence the contribution to the variance of $S$ is $o( (\Exp S)^2).$  In the same way, the contributions of $T_3$ and $T_4$ are smaller still.  As each is individually of order $o( (\Exp S)^2 )$, we have that $S>0$ with high probability.
\end{proof}

\begin{lemma}
\label{lem:giant_component_size}
Suppose that $p = \omega(1/n),$ then for any $\epsilon >0,$ the number of vertices not in the giant component is at most $n e^{-(1-\epsilon)np}$ with high probability.
\end{lemma}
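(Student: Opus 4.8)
The plan is to run a first-moment argument on the number of vertices lying in small components, and then to invoke the standard fact that $G(n,p)$ with $np\to\infty$ has at most one macroscopic component. Fix $\epsilon>0$ and put $L=\lfloor \epsilon n/8\rfloor$. Write $C_v$ for the component of $v$ and let $X=\sum_{v}\one[\,|C_v|\le L\,]$ count the vertices in components of size at most $L$. The first step is to bound $\Prob[\,|C_v|\le L\,]$ by enumerating spanning trees: for each $k$,
\[
\Prob[\,|C_v|=k\,]\ \le\ \binom{n-1}{k-1}k^{k-2}p^{k-1}(1-p)^{k(n-k)},
\]
since a component of size $k$ must contain one of the $\binom{n-1}{k-1}k^{k-2}$ labelled trees on $v$ and $k-1$ further vertices, and all $k(n-k)$ edges to the complement must be absent. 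Using $\binom{n-1}{k-1}k^{k-2}\le e^{k}n^{k-1}/k$ and, for $k\le L$, the bound $(1-p)^{k(n-k)}\le e^{-pk(n-k)}\le e^{-(1-\epsilon/8)knp}$, the right-hand side is at most $\beta^{k}/(k\,np)$ with $\beta:=enp\cdot e^{-(1-\epsilon/8)np}$. Since $np\to\infty$ we have $\beta\le e^{-(1-\epsilon/4)np}\to 0$, so summing the resulting geometric-type series gives, for $n$ large,
\[
\Prob[\,|C_v|\le L\,]\ \le\ \frac{2\beta}{np}\ \le\ e^{-(1-\epsilon/2)np}.
\]
Hence $\Exp X\le n e^{-(1-\epsilon/2)np}$, and Markov's inequality yields
\[
\Prob\!\left[X\ge n e^{-(1-\epsilon)np}\right]\ \le\ \frac{\Exp X}{n e^{-(1-\epsilon)np}}\ \le\ e^{-\epsilon np/2}\ \longrightarrow\ 0 .
\]

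Next I would rule out two large components. If $G$ had two components of size $>L$, then $V$ would split into two parts of size $>L$ with no crossing edges, so by a union bound over such bipartitions
\[
\Prob[\exists\text{ two components of size}>L]\ \le\ \sum_{k=L}^{n-L}\binom{n}{k}(1-p)^{k(n-k)}\ \le\ 2^{n}e^{-pL(n-L)}\ \le\ e^{\,n\log 2-\,c\epsilon n\cdot np}\ \longrightarrow\ 0,
\]
using $k(n-k)\ge L(n-L)\ge c\epsilon n^{2}$ for an absolute constant $c>0$ and $np\to\infty$. On the intersection of these two high-probability events, at most one component has size $>L$, and at least one does (since $X<n$), so that component is the giant $\tilde G$ and every vertex of $V\setminus\tilde G$ sits in a component of size $\le L$; therefore $n-|\tilde G|\le X\le n e^{-(1-\epsilon)np}$, which is the claim.

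The step I expect to be the main obstacle — really the only delicate point — is extracting the \emph{precise} constant $1-\epsilon$ in the exponent of $\Prob[\,|C_v|\le L\,]$ rather than merely some positive constant. This is what forces $L$ to be a small multiple of $n$, so that $(1-p)^{k(n-k)}$ still carries essentially the full weight $e^{-np}$ for every relevant $k$, and it is exactly here that $p=\omega(1/n)$ is used: to absorb the polynomial-in-$np$ prefactors coming from the tree count and to kill the $k\ge 2$ tail, leaving the $k=1$ (isolated-vertex) term $(1-p)^{n-1}\approx e^{-np}$ dominant. Everything else is routine. An alternative to the tree enumeration is to explore $C_v$ by breadth-first search and dominate the active-vertex count from below by a random walk with $\Binom(n-L,p)$ increments, which on the event $\{|C_v|\le L\}$ must return to $0$ within $L$ steps; this gives the same estimate with the same dominant term.
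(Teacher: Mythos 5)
Your proof is correct, but it takes a genuinely different route from the paper's. The paper runs a single union bound over vertex subsets: letting $N_r$ be the number of sets $W$ with $|W|=r$ and no edges to $W^c$, it shows $\sum_{r=r_0}^{n/2}\Exp N_r\to 0$ for $r_0=ne^{-(1-\epsilon)np}$, splitting the sum at $\epsilon n/4$ and controlling the small-$r$ range by a ratio test ($\Exp N_{r+1}/\Exp N_r\le e^{-\epsilon np/2}$), so the whole sum is dominated by the leading term $\Exp N_{r_0}$. You instead establish the per-vertex tail $\Prob[|C_v|\le \epsilon n/8]\le e^{-(1-\epsilon/2)np}$ by Cayley-type spanning-tree enumeration, apply Markov to the count $X$ of vertices in small components, and then rule out two macroscopic components by a cut-counting bound that essentially coincides with the paper's $S_2$ estimate. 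Your version is more modular (the component-size tail is a reusable byproduct, and the identification of the unique large component as the giant is made explicit, which the paper leaves implicit), at the cost of an extra Markov step that sacrifices a factor $e^{\epsilon np/2}$ in the failure probability; the paper's single union bound is more economical and controls the deviation event directly, but its claimed equivalence ``$R<r$ iff no disconnected $W$ with $|W|\ge r$'' glosses over exactly the uniqueness point you handle explicitly. Both arguments use $np\to\infty$ in the same way: to absorb the entropy terms ($\log(n/r)$ in the paper, the $enp$ prefactor from the tree count in yours) into the exponent $-(1-\epsilon)np$.
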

\begin{proof}
Set $R$ to be the number of vertices not in the largest component of $G(n,p)$.  If $W$ is the set of these vertices, then $W$ satisfies $e(W,W^c)=0.$  Therefore, if there is no collection $W$ of at least $r$ vertices such that $e(W,W^c)=0,$ then $R < r.$ 

The expected number $\Exp N_r$ of such collections $W$ is given by
\[
\Exp N_r
= (1-p)^{r(n-r)} { n \choose r }.
\]
Set $r_0 = n e^{-(1-\epsilon)np}.$  We will show that
\(
\sum_{r=r_0}^{n/2} \Exp N_r \to 0,
\)
which implies the lemma.

Subdivide the sum into two pieces $S_1$ and $S_2,$ given by
$S_1 = \sum_{r_0}^{\lfloor \epsilon n/4 \rfloor} \Exp N_r$
and
$S_2 = \sum_{\lfloor \epsilon n/4 \rfloor}^{n/2} \Exp N_r.$ For $\lfloor \epsilon n/4 \rfloor \leq r \leq n/2,$
\[
\Exp N_r
= (1-p)^{r(n-r)} { n \choose r }
\leq e^{-c_\epsilon n^2p} 2^n,
\]
for some $c_\epsilon > 0,$ which decays exponentially in $n$ as $np \to \infty.$  Hence $S_2 \to 0.$

As for $S_1,$ we claim that for any $\alpha > 0$ there is an $n \geq n_0(\alpha,\epsilon)$ sufficiently large so that for all $r_0 < r < \epsilon n/4,$ $\Exp N_{r+1} \leq \alpha \Exp N_r$ for all $n \geq n_0(\alpha,\epsilon).$  Estimating for these $r,$
\begin{align*}
\frac{\Exp N_{r+1}}{\Exp N_r}
&= (1-p)^{n-2r+1} \frac{n-r-1}{r+1} \\
&\leq \frac{ne^{-np+2rp} }{r}.\\
&\leq \frac{ne^{-(1-\epsilon/2)np} }{r}.\\
&\leq e^{-\epsilon np/2}.
\end{align*}
Hence, as $np \to \infty,$ this is eventually less than any positive $\alpha.$

As $S_1$ is dominated by a geometric series, and $S_1 = O( \Exp N_{r_0}).$  For this leading term, we get that
\[
\Exp N_{r_0} \leq e^{-pr_0(n-r_0)} \left( \frac{en}{r_0}\right)^{r_0}
\leq \exp\left( -\epsilon n^2pe^{-(1-\epsilon)np}(1-o(1))
\right) \to 0,
\]
completing the proof.
\end{proof}

\begin{lemma}
\label{lem:dust_degree_bound}
If $p = \omega( \sqrt{\log n}/ n),$ then with high probability, the maximum degree of the vertices not in the giant component is at most $np/100.$
\end{lemma}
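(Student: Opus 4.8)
The plan is a first-moment argument over vertices, combined with the classical estimate for the probability that a fixed vertex of $G(n,p)$ lies in a small component. By symmetry it suffices to show that $n\cdot\Pr[\,\deg v \ge np/100,\ v\text{ not in the giant component}\,]\to 0$ for a fixed vertex $v$. First I would make a deterministic reduction: let $\mathcal G$ be the largest component; if $v\notin\mathcal G$ then the component $C(v)$ of $v$ is disjoint from $\mathcal G$ and satisfies $|C(v)|\le|\mathcal G|$, so $2|C(v)|\le|C(v)|+|\mathcal G|\le n$, i.e. $|C(v)|\le n/2$; while $\deg v\ge np/100$ forces $|C(v)|\ge np/100$. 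Hence the event in question is contained in $\{\,np/100\le|C(v)|\le n/2\,\}$, and it is enough to bound $\sum_{np/100\le k\le n/2}\Pr[\,|C(v)|=k\,]$.

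For each $k$ I would use the standard bound $\Pr[\,|C(v)|=k\,]\le\binom{n-1}{k-1}k^{k-2}p^{k-1}(1-p)^{k(n-k)}$ — choose the other $k-1$ vertices of the component, union-bound over the $k^{k-2}$ spanning trees on $k$ labelled vertices, and independently demand that no edge leaves the set. Using $\binom{n-1}{k-1}k^{k-2}\le\tfrac{e}{k}(en)^{k-1}$ and $(1-p)^{k(n-k)}\le e^{-pkn/2}$ (valid for $k\le n/2$), this is at most $e\,\rho^{\,k-1}$ with $\rho:=np\,e^{1-pn/2}$. The hypothesis $p=\omega(\sqrt{\log n}/n)$ forces $pn\to\infty$, hence $\log(np)=o(pn)$ and $\rho=\exp(1+\log(np)-pn/2)\to 0$; so the sum is a convergent geometric series and $\sum_{k\ge np/100}\Pr[\,|C(v)|=k\,]\le 2e\,\rho^{\,np/100-1}=\exp\!\big(-(\tfrac1{200}-o(1))(np)^2\big)$.

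Since $p=\omega(\sqrt{\log n}/n)$ is precisely the statement $(np)^2=\omega(\log n)$, this last bound is $n^{-\omega(1)}$, which stays $o(1)$ after multiplication by $n$; this closes the union bound. The only real obstacle is to see why one cannot simply invoke Lemma~\ref{lem:giant_component_size}: in this regime $np\le\tfrac12\log n$, so that lemma forbids disconnected vertex sets only of size $\gtrsim n\,e^{-(1-\epsilon)np}$, which can far exceed $np/100$. The resolution is to use that the component of a degree-$\ge np/100$ vertex contains $\gtrsim np$ vertices, and that a set of that size fails to attach to the rest of the graph with probability only $\exp(-\Theta((np)^2))$, which beats the union-bound factor $n$ exactly when $(np)^2\gg\log n$.
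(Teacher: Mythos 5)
Your proof is correct, but it takes a genuinely different route from the paper. The paper first invokes Lemma~\ref{lem:giant_component_size} to get $R \leq n e^{-np/2}$ w.h.p.\ for the number $R$ of non-giant vertices, and then conditions on the non-giant set $W$ being disconnected from its complement: under that conditioning the induced graph on $W$ is $G(r,p)$, so each degree is $\Binomial(r-1,p)$ with the tiny mean $rp \leq np\,e^{-np/2}$, and the upper-tail bound of Lemma~\ref{maybe} gives failure probability $\exp(-cnp\log(n/r)) \leq \exp(-c(np)^2(1-o(1)))$. You instead run a first-moment bound over single vertices, observing that a non-giant vertex of degree at least $np/100$ forces its component to have size in $[np/100,\,n/2]$, and then kill that event with the classical Cayley-formula bound $\Pr[|C(v)|=k] \leq \binom{n-1}{k-1}k^{k-2}p^{k-1}(1-p)^{k(n-k)}$, whose geometric decay yields $\exp(-(\tfrac1{200}-o(1))(np)^2)$. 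Both arguments land on the same numerology --- a failure probability $\exp(-\Theta((np)^2))$ that beats the union-bound factor $n$ exactly when $(np)^2 = \omega(\log n)$ --- and your closing remark correctly identifies why Lemma~\ref{lem:giant_component_size} alone cannot finish the job. Your version is self-contained (it needs neither Lemma~\ref{lem:giant_component_size} nor the conditional-law and monotonicity-in-$r$ steps that the paper uses to handle the randomness of $W$), at the cost of importing the spanning-tree component bound; the paper's version recycles machinery it has already built.
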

\begin{proof}

Set $R$ to be the number of vertices not in the giant component.  By Lemma~\ref{lem:giant_component_size}, we have that $R \leq ne^{-np/2}$ with high probability.  Suppose that $W$ is a fixed collection of vertices of size $r.$  Conditional on there being no edges between $W$ and $W^c,$ the law of the induced graph on $W$ is simply that of $G(r,p).$

Let $X \sim \Binomial(r-1,p).$  Then by Lemma~\ref{maybe} there are absolute constants $c>0$ and $M > 0$ so that
\[
\Pr \left[ X > np/100 \right]
\leq \exp(-cnp\log(n/r))
\]
provided $r<n/M.$  Setting $E_W$ to be the event that $W$ and $W^c$ are not connected
\[
\Pr \left[
\max_{w \in W} \deg w > np/100 ~\middle\vert~ E_W
\right] \leq r\exp(-cnp\log(n/r)).
\]
Let $Y$ be the max degree of all vertices not in the largest component
As the previous bound holds for all $W$ in consideration, we get that
\[
\Pr \left[
Y > np/100 ~\middle\vert~ R = r
\right] \leq r\exp(-cnp\log(n/r)).
\]
This bound is monotone increasing in $r,$ and so we get that
\[
\Pr \left[
Y > np/100 ~\middle\vert~ R \leq ne^{-np/2}
\right] \leq n\exp(-c(np)^2(1-o(1)))
\]
for some absolute constant $c$.
Thus by the assumption on $np,$ the desired claim holds.
\end{proof}

\begin{pfofthm}{
Theorem~\ref{thm:ergap_giant2} and
Proposition~\ref{prop:handles}
}

For Proposition~\ref{prop:handles}, the previous three Lemmas~\ref{lem:handles_exist},~\ref{lem:giant_component_size}, and~\ref{lem:dust_degree_bound} show the desired claim that w.h.p.\ there are tuples $(a,b,c,d)$ of vertices in the giant component for which $\deg a$ and $\deg d$ are at least $np/2,$ vertices $b$ and $c$ have degree $2,$ and the induced graph on these vertices is a path.

Letting $H$ be the giant component of the graph, then there is a constant $C$ so that the eigenvalues of the Laplacian of $H$ satisfy
\[
\lambda_{|H|} \geq \tfrac{3}{2}
\]
and
\[
\lambda_{2} \leq \tfrac{1}{2} + C/\sqrt{np},
\]
by Lemma~\ref{lem:deterministic2}.
\end{pfofthm}

\section{Gap process theorem} \label{sec:proc}
In this section we prove a general process-version theorem for the spectral gap below the connectivity threshold.  We recall the definition of $\LMP[t][k],$ the continuous time Linial-Meshulam process.  Let $F_k$ denote the collection of all possible $k$--faces on $n$ vertices, and let $F_k(S)$ for simplicial complex $S$ be all $k$--faces of $S$.  Let $\{T_\sigma, \sigma \in F_k\}$ be an i.i.d.\ family of $\Exponential(1)$ variables.  Define $\{\LMP[t][k], t \geq 0\}$ to be the continuous time Markov process
where $\LMP[0][k]$ is the complete $(k-1)$-skeleton of the $n$-simplex and its $k$-faces are given by
\[
F_k(\LMP[t][k]) = \{ \sigma \in F_k~:~T_\sigma \leq t\}.
\]
Thus $\LMP[t][k]$ is the complex whose $k$-faces have been born up to time $t,$ and $\LMP[\infty][k]$ is the complete $k$-skeleton of the $n$-simplex.  For $k=1,$ this recovers the standard continuous time \ER process.  For fixed $t,$ $\LMP[t][k]$ is the Bernoulli complex $Y_k(n,p(t))$ with $p(t) = 1 - e^{-t}.$  Let $d(t) = (n-1)p(t).$ Fix $\delta \in (0, \tfrac12)$ and define $t_0$ by the relation that
\[
p(t_0) = \begin{cases}
(\tfrac 12 + \delta) \log n /  n & k = 1, \\
(k - 1 + \delta) \log n /  n & k > 1.
\end{cases}
\]

For any $(k-2)$--dimensional face $f$ of a $k$--dimensional simplicial complex $S,$ we identify its \emph{link} with a graph, denoted $\link(f).$ 
We will only consider links of $(k-2)$--dimensional faces.
This graph $\link(f)$ has vertex set given by all $(k-1)$--dimensional faces containing $f.$  Two of these edges $e$ and $g$ are connected if and only if $e \cup g,$ which is a $k$--dimensional face, is contained in $S.$  

For example, when $k=1$ and $S$ is a graph, the only $(k-2)$--dimensional face is the empty set.  Its link has vertex set given by all $0$--dimensional faces (all vertices), and vertices are connected if and only if they are contained in an edge.  Hence, in this case $\link(\emptyset)$ can be identified with the original graph $S.$ 

In $\LMP[t][k],$ which has a complete $(k-1)$--skeleton, each link is distributed as a $G(n-k+1,p(t)).$  These links $\{\link(f)\},$ where $f$ ranges over all $(k-2)$--dimensional faces, are not independent, and in fact are analysis rests in some ways on exploiting their exact dependency structure.

Recall that we refer to a $(k-1)$--dimensional face $f$ as \emph{isolated} if and only if it is not contained in any $k$--dimensional face.  Note that a face $f$ is isolated if and only if it is an isolated vertex in $\link(g)$ for all $(k-2)$--dimensional $g \subset f.$

\begin{theorem}
\label{thm:stopping_time}
Let $\tLMP[t][k]$ denote the process 
derived from $\LMP[t][k]$
by removing every isolated $(k-1)$-face.
There is a constant $C=C(k,\delta)$ so that
with high probability
the normalized Laplacian of $\link(f)$ of
every dimension--$(k-2)$ face $f$ of
\(
{\tLMP[t][k]}
\)
has
\[
\max_{i > 1} \left| 1 - \lambda_i\right| < \frac{C}{\sqrt{d(t)}}.
\]
for all $t \geq t_0.$
\end{theorem}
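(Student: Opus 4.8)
The plan is to reduce the statement to the graph case by Garland's method, apply the deterministic gap criterion Lemma~\ref{lem:deterministic} to every codimension-two link of the complex, and arrange the failure probabilities of the four structural conditions to be small enough to survive both a union bound over the faces and a discretization of the time parameter.

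First I would carry out the Garland-style reduction. For a $(k-2)$-face $f$ of the complete $(k-1)$-skeleton, the link of $f$ in $\LMP[t][k]$ is the graph on the $N:=n-k+1$ vertices disjoint from $f$ in which $\{u,v\}$ is an edge exactly when $f\cup\{u,v\}$ is one of the born $k$-faces; since these events are independent, $\link(f)\sim G(N,p(t))$, and in fact, as $t$ varies, the link-of-$f$ process is itself an \ER process on $N$ vertices. A vertex $v$ of $\link(f)$ is isolated precisely when the $(k-1)$-face $f\cup\{v\}$ is isolated, and removing an isolated $(k-1)$-face deletes no $k$-face and hence no edge of any link, so $\link_{\tLMP[t][k]}(f)$ is exactly $G(N,p(t))$ with its isolated vertices deleted. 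Since $p(t_0)=(k-1+\delta)\log n/n$ --- which is at least $(1+\delta)\log n/n$ when $k\geq2$, and equals $(\tfrac12+\delta)\log n/n$ when $k=1$, where $f=\varnothing$ and $\link(f)$ is $\LMP[t][1]$ itself --- in this range $G(N,p(t))$ is w.h.p.\ a single giant component together with isolated vertices, so $\link_{\tLMP[t][k]}(f)$ is w.h.p.\ connected, its normalized Laplacian has a one-dimensional kernel, and $\max_{i>1}|1-\lambda_i|$ of it is the absolute gap of that giant component.

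Next, for a fixed $t\geq t_0$ and a fixed $f$, I would verify the four hypotheses of Lemma~\ref{lem:deterministic} for $\link_{\LMP[t][k]}(f)\sim G(N,p(t))$ with $d=d(t)\geq d(t_0)$, exactly as in the proof of Theorem~\ref{thm:ergap_giant}: bounded degree and the discrepancy property from Lemma~\ref{lem:regularity}, the adjacency bound from Proposition~\ref{prop:adj}, the structure of $\fuzz$ from Proposition~\ref{prop:key_structure}, and the parallel-eigenspace bound from Lemma~\ref{lem:T_flattens}. Each fails with probability at most $Cn\exp(-(2-\epsilon)d)+C\exp(-d^{1/4}\log n)$, and since $d(t_0)=(k-1+\delta-o(1))\log n$ one can pick $\epsilon=\epsilon(\delta)$ so small that $(2-\epsilon)(k-1+\delta)>k$; as there are at most $n^{k-1}$ faces $f$, a union bound over $f$ keeps the total failure probability at a single time $o(1)$, the term $\exp(-d^{1/4}\log n)$ being superpolynomially small. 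To make this uniform in $t$, I would pick a mesh $t_0=s_0<s_1<\dots<s_L$ with $d(s_{j+1})\leq2d(s_j)$ --- only $L=O(\log n)$ points are needed as $d$ runs from $\Theta(\log n)$ to $n-1$ --- and union bound over the mesh as well. For $t\in[s_j,s_{j+1}]$ the maximum degree and the counts $e(A,B)$ governing the discrepancy condition only increase along the process while the normalizations change by the bounded factor $d(s_{j+1})/d(s_j)\leq2$, so bounded degree and discrepancy persist with slightly worse constants, and with them the Kahn--Szemer\'edi conclusion of Proposition~\ref{prop:adj}. Finally $\fuzz$ (the small-degree vertices of $\link(f)$) at time $t$ is contained in $\{v:\deg_{s_j}v\leq2d(s_j)/M\}$, which Proposition~\ref{prop:key_structure} applied at $s_j$ with parameter $M/2$ shows is small and independent, and one rules out that a newly born edge of $\link(f)$ has both endpoints in $\fuzz$ at the time it appears, or gives some $u$ two neighbours in $\fuzz$, by a union bound in which each offending pair or triple carries an extra factor of order $p(s_{j+1})$, respectively $p(s_{j+1})^2$, which is small near the threshold; Lemma~\ref{lem:deterministic} then applies to every link of $\tLMP[t][k]$ for all $t\geq t_0$.

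I expect the main obstacle to be precisely this uniformity in $t$, and within it the handling of $\fuzz$: the conditions that $\fuzz$ be independent and that $\max_{u\in\fuzz^c}e(u,\fuzz)\leq1$ are not monotone along the process, because vertices enter and leave $\fuzz$ as new $k$-faces are born, so one cannot simply sandwich between consecutive mesh points and must instead couple the fuzz structure across the whole interval while still exploiting the $p$-factor that makes short edges between low-degree vertices rare. For $k=1$ this is genuinely tight: below $p=\tfrac12\log n/n$ two degree-two vertices do join up through the high-degree core and the gap drops to $\tfrac12+o(1)$ by Theorem~\ref{thm:ergap_giant2}, so any correct argument must break down exactly at $p(t_0)$.
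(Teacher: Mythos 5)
Your overall architecture --- identify each codimension-two link with an \ER graph, verify the four hypotheses of Lemma~\ref{lem:deterministic}, and union bound over the $O(n^{k-1})$ faces and over a time discretization --- is the same as the paper's, but the step you yourself single out as the main obstacle, uniformity in $t$, is where your sketch does not close. The concrete problem is your coarse doubling mesh ($d(s_{j+1})\leq 2d(s_j)$, $O(\log n)$ points): it forces you to interpolate every condition across each interval, and your claim that bounded degree and discrepancy persist by monotonicity ``and with them the Kahn--Szemer\'edi conclusion of Proposition~\ref{prop:adj}'' is false. The heavy couples are controlled deterministically by discrepancy, but the light couples require a separate martingale estimate at the specific time $t$; the bilinear form $x^tA_ty$ with $x\perp\one$ is a signed sum and is not monotone in the edge set, and bounding the increment $\sup|x^t(A_t-A_{s_j})y|$ by entrywise domination destroys the orthogonality to $\one$ and yields only a bound of order $d$, not $\sqrt{d}$. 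The parallel-eigenspaces condition of Lemma~\ref{lem:T_flattens} is likewise non-monotone and is not addressed between mesh points at all. The paper sidesteps interpolation entirely: Lemma~\ref{lem:proc_easy} uses a mesh of roughly $n^{2k+1}$ points, fine enough that with probability $1-n^{-2}$ the complex coincides with one of the mesh snapshots at \emph{every} $t\geq t_0$, and then exploits the tunable parameter $m$ in Lemma~\ref{lem:regularity}, Proposition~\ref{prop:adj} and Lemma~\ref{lem:T_flattens} to make the per-snapshot failure probability decay like $n^{-cm}$, beating the polynomially many snapshots and links. That free parameter $m$ is in those statements precisely for this purpose, and your proposal never uses it.

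For the fuzz your per-pair union bound carrying an extra factor of $p$ is in the right spirit and is numerically viable, but it is only gestured at, and the paper's actual device is different and worth noting: $\fuzz[f][t]$ is defined in~\eqref{eq:fuzz_t} with the \emph{fixed} threshold $d(t_0)/M$ rather than $d(t)/M$, which makes each fuzz set monotone \emph{decreasing} in $t$ and removes the enter-and-leave problem you worry about. Lemma~\ref{lem:supersmall} then proves the global bound $\sum_f|\fuzz[f][t_0]|^2\leq n^{1-\epsilon}$ --- this needs a separate ``balanced collection'' argument because for $k\geq 2$ the links overlap and are far from independent, a dependence your link-by-link union bound glosses over --- and Lemma~\ref{lem:fuzz_process} accounts, face by face along the birth times $\tau_i$, for the probability that a newly born $k$-face creates an edge inside some fuzz set. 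Finally, since all fuzz sets are empty once $p(t)=C\log n/n$, the correctly normalized set $Q^f(t)$ is contained in $\fuzz[f][t]$ with only a constant loss in $M$, which is how the fixed-threshold trick is reconciled with the hypotheses of Lemma~\ref{lem:deterministic}. Without the fixed threshold and the $n^{1-\epsilon}$ second-moment bound, your argument has to be rebuilt from the conditioning computations of Proposition~\ref{prop:key_structure} on every mesh interval, which is not carried out in the proposal.
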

An equivalent formulation is that each $\link(f)$ for codimension-$2$ $f \in \LMP[t][k]$ consists of isolated vertices and a giant component whose gap is $1-C/\sqrt{d(t)}$ for all time $t \geq t_0.$
In the higher-dimensional setting, the proof is more complicated than simply studying each link individually and taking the union bound.  The key is to study the ``fuzz''  globally.  To this end, for each $\link(f)$ and for any $M \geq 1,$ let
\begin{equation}
\label{eq:fuzz_t}
\fuzz[f][t]
=\{
w \in \vertexsetof(\link(f))~:~ \deg_{\link(f)}(w) \leq d(t_0)/M
\}.
\end{equation}
Note that this makes each $\fuzz[f][t]$ monotone decreasing in $t.$

\begin{lemma}
\label{lem:supersmall}
There is an $M=M(k,\delta)$ and an $\epsilon=\epsilon(k,\delta)$ so that
\[
\sum_{f \in F_{k-2}}\left|
\fuzz[f][t_0]
\right|^2
\leq n^{1 - \epsilon}
\]
with overwhelming probability.
\end{lemma}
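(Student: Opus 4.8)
The plan is to bound the second moment $\sum_{f \in F_{k-2}} |\fuzz[f][t_0]|^2$ by a union bound over pairs of vertices in a common codimension-2 link, exactly paralleling part (i) of Proposition~\ref{prop:key_structure} but keeping track of the extra combinatorial factor coming from the choice of $f$. First I would rewrite the sum as a count: $\sum_f |\fuzz[f][t_0]|^2$ is at most the number of triples $(f, w_1, w_2)$ with $f \in F_{k-2}$, $w_1, w_2 \in \vertexsetof(\link(f))$ (not necessarily distinct), and $\deg_{\link(f)}(w_i) \le d(t_0)/M$ for $i=1,2$. There are $\binom{n}{k-1}$ choices for the vertex set of $f$ and then $O(n^2)$ choices for the ordered pair $(w_1,w_2)$, so the expectation is bounded by $n^{k+1}$ times the probability that two fixed vertices $w_1, w_2$ are both low-degree in $\link(f)$ for a fixed $(k-2)$-face $f$.

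The key point is that the degree of $w$ in $\link(f)$ is the number of $k$-faces of the complex containing $f \cup \{w\}$, which is a $\Binomial(n-k, p(t_0))$ variable (one trial for each remaining vertex $z$, the face $f\cup\{w,z\}$ present with probability $p(t_0)$). When $w_1 \ne w_2$, the two such degree counts share at most the single trial corresponding to $z \in \{w_1,w_2\}$ being the completing vertex, so they are very nearly independent; the shared-trial correction contributes only a bounded multiplicative factor, just as in the estimate~\eqref{eq:cofuzz} of Proposition~\ref{prop:key_structure}. Applying Lemma~\ref{magic} to each $\Binomial(n-k, p(t_0))$ factor with threshold $d(t_0)/M$ gives
\[
\Pr\left[\deg_{\link(f)} w_1 \le d(t_0)/M,\ \deg_{\link(f)} w_2 \le d(t_0)/M\right]
\le \exp\!\left(-2 d(t_0) + \tfrac{2 d(t_0)}{M}(1 + \log M + O(1))\right).
\]
Since $d(t_0) = (k-1+\delta)\log n$ for $k > 1$ (and $(\tfrac12+\delta)\log n$ for $k=1$), this probability is $n^{-2(k-1+\delta) + o_M(1)}$, where the $o_M(1)$ term can be made smaller than any fixed constant by taking $M$ large. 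Combining with the $n^{k+1}$ prefactor yields $\Exp \sum_f |\fuzz[f][t_0]|^2 \le n^{k+1 - 2(k-1+\delta) + o_M(1)} = n^{3 - k - 2\delta + o_M(1)}$. For $k \ge 2$ this exponent is at most $1 - 2\delta + o_M(1) < 1$, so choosing $M$ large enough and $\epsilon = \delta$ gives the bound $n^{1-\epsilon}$ in expectation; Markov's inequality then upgrades this to "with overwhelming probability" since the expectation is itself a negative power of $n$ times $n^{1-\epsilon}$... more carefully, one compares $\Exp[\cdot]$ against $n^{1-\epsilon}$ with a little room to spare and applies Markov.

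The main obstacle I anticipate is getting the arithmetic of the exponents to actually close with room to spare: the $n^{k+1}$ from the union bound is a genuinely large prefactor, and one must be careful that the gain of $e^{-2d(t_0)} = n^{-2(k-1+\delta)}$ from two low-degree events really does beat it. For $k=2$ this is the tight case — one has $n^{3} \cdot n^{-2(1+\delta)} = n^{1-2\delta}$ — and one must check that the $O(1)$ slack in the exponent from Lemma~\ref{magic} and the shared-trial correction can be absorbed into the $n^{-2\delta}$ by taking $M$ large, which forces the choice $\epsilon = \epsilon(k,\delta)$ to depend on $\delta$ (one cannot take $\epsilon$ uniform). A secondary technical point is handling the $w_1 = w_2$ diagonal term (a single $\Binomial$, contributing $n^{k+1}\cdot n^{-(k-1+\delta)+o(1)}$, which is $n^{2-\delta+o(1)}$ for $k=2$ — this is \emph{larger} than $n^{1-\epsilon}$ and seems to break the argument); I would resolve this by noting $|\fuzz[f][t_0]|^2 \le |\fuzz[f][t_0]| + 2\binom{|\fuzz[f][t_0]|}{2}$ so the diagonal contributes $\sum_f |\fuzz[f][t_0]|$, which by part (i)-type reasoning is $\sum_f O(n/(d(t_0) M)) \cdot$... — actually this too needs the pair bound, so the cleaner route is to observe directly that $\sum_f |\fuzz[f][t_0]| \le n^{1-\epsilon}$ follows from the same computation with only \emph{one} low-degree factor against the $n^{k}$ choices of $(f,w)$: for $k \ge 2$ the diagonal contributes $n^{k - (k-1+\delta)+o(1)} = n^{1-\delta+o(1)}$, which is $\le n^{1-\epsilon}$ for $\epsilon < \delta$. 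So the off-diagonal pair term is the dominant one and the whole bound is $n^{1-\epsilon}$ with $\epsilon = \epsilon(k,\delta) \to$ something like $\min(\delta, k-1)$, and for $k=1$ the exponent $3 - k - 2\delta$ reads $2-2\delta$, which exceeds $1$ — so for $k=1$ one instead invokes Proposition~\ref{prop:key_structure}(i) directly, which already gives $|\fuzz| = O(n/d)$ and hence $\sum = |\fuzz|^2 = O(n^2/d^2) = o(n)$ with the single summand. I would organize the final writeup to treat $k=1$ via Proposition~\ref{prop:key_structure} and $k \ge 2$ via the pair-counting argument above.
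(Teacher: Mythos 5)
Your expectation computation for $k\ge 2$ is essentially sound as far as it goes: the two link-degrees $\deg_{\link(f)}(w_1)$ and $\deg_{\link(f)}(w_2)$ are determined by disjoint families of independent Bernoulli trials except for the single shared trial for the face $f\cup\{w_1,w_2\}$, so the pair estimate and the exponent count $\Exp\sum_f|\fuzz[f][t_0]|^2\le n^{1-\delta+o(1)}$ for $k\ge2$ are correct (though note the \emph{diagonal} term $n^{k}\cdot n^{-(k-1+\delta)+o(1)}=n^{1-\delta+o(1)}$ dominates the off-diagonal $n^{3-k-2\delta+o(1)}$, not the other way around). The central gap is the last step: the lemma asserts the bound \emph{with overwhelming probability}, which the paper defines as failure probability decaying faster than any power of $n$. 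Markov's inequality applied to an expectation of order $n^{1-\delta+o(1)}$ against the threshold $n^{1-\epsilon}$ yields a failure probability of order $n^{\epsilon-\delta+o(1)}$ --- only polynomially small --- and no first-moment argument can do better, since the expectation is itself only polynomially below the threshold. The paper avoids this by proving two separate overwhelming-probability statements, $\max_f|\fuzz[f][t_0]|\le n^{\epsilon}$ and $\sum_f|\fuzz[f][t_0]|\le n^{1-2\epsilon}$, and multiplying them. The second is obtained not from a moment bound but from a direct union bound over collections of $r=\lceil n^{1-\delta/2}\rceil$ low-degree $(k-1)$-faces: after reducing to \emph{balanced} collections (using the first statement to limit how many chosen faces share a common $(k-2)$-face), the number of $k$-faces covering the collection stochastically dominates a $\Binomial(r(n-k-n^{\epsilon}k),p(t_0))$ variable, and Lemma~\ref{magic} gives a joint failure probability of $\exp(-r\cdot\omega(1))$, which is superpolynomially small. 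Some such joint large-deviation bound on the upper tail of the \emph{count} of low-degree faces is what your write-up is missing.

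The $k=1$ case of your proposal is also wrong as written: Proposition~\ref{prop:key_structure}(i) gives only $|\fuzz|=O(n/d)$ with $d=\Theta(\log n)$, so $|\fuzz|^2=O(n^2/\log^2n)$, which is far larger than $n^{1-\epsilon}$ --- the claim ``$O(n^2/d^2)=o(n)$'' is false in this regime. What is needed (and what the paper uses) is the much stronger bound $|\fuzz[\emptyset][t_0]|\le n^{1/2-\epsilon}$, obtained by running the union bound of Proposition~\ref{prop:key_structure}(i) over sets of size $\lceil n^{1/2-\epsilon}\rceil$ and using $d(t_0)\ge(\tfrac12+\delta)\log n$ so that the exponential gain $\exp(-n^{1/2-\epsilon}d(t_0)(1-\eta))$ beats the entropy factor $\binom{n}{n^{1/2-\epsilon}}$.
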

\begin{proof}
For $k=1,$ there is only one link to consider, and so it suffices to show that
\(
\left|\fuzz[\emptyset][t_0]\right| \leq n^{1/2 - \epsilon}.
\)
For $k>1,$ we proceed by showing that for any $\epsilon$ there is an $M$ so that both
\begin{enumerate}
\item
\(
\max_{f \in F_{k-2}} \left|
\fuzz[f][t_0]
\right|
\leq n^{\epsilon}
\)
\item
\(
\sum_{f \in F_{k-2}}\left|
\fuzz[f][t_0]
\right|
\leq n^{1 - 2\epsilon}
\)
\end{enumerate}
hold with overwhelming probability.

The first condition follows from an identical argument to the first part of Proposition~\ref{prop:key_structure}; the $k=1$ case follows from an identical argument, and we just sketch the $k>1$ case.  As before, for any $1 > \eta > 0,$ there is an $M(\delta,\eta)$ sufficiently large so that for a fixed set of vertices $w_1,w_2,\ldots,w_{\lceil {n}^{\epsilon} \rceil},$
\[
\Pr \left[
\deg_{\link(f)}(w_i) \leq d(t_0)/M,~\forall~1 \leq i \leq\lceil n^{\epsilon} \rceil
\right] = O(\exp(-{n}^{\epsilon}d(t_0)(1-\eta))).
\]
This overwhelms the $O(\exp( (1-\epsilon)n^{\epsilon}\log n))$ possible choices of vertices as
\[
d(t_0)/\log n > (1+\delta)(1+o(1))
\] and $\eta$ may be chosen sufficiently small.  As there are only $O(n^{k-1})$ many links to consider, this may be taken to hold for all links simultaneously with overwhelming probability.

We now turn to the second condition.
For a fixed $(k-1)$-dimensional face $f$, let $X_f$ denote the number of $k$-faces in $\LMP[t_0][k]$ containing $f.$  If $f$ is a vertex in a $(k-2)$-dimensional face of $\LMP[t_0][k],$ then $X_f$ is the degree of that vertex in $\link(f).$  Hence
\[
\frac{1}{k}
\sum_{f \in F_{k-2}}\left|
\fuzz[f][t_0]
\right|
=
\sum_{f \in F_{k-1}}
\one[X_f \leq d(t_0)/M].
\]
Thus by adjusting $\epsilon,$ it suffices to show the claim for the right hand side.
Call a collection $S$ of $(k-1)$-faces \emph{balanced} if
\[
\max_{w \in F_{k-2}} \left| \{
\sigma \in S~:~ w \subset \sigma
\}\right| \leq n^{\epsilon}.
\]
Observe that we have shown that with overwhelming probability the set 
\[
  S = \left\{ f \in F_{k-1} : X_f \leq d(t_0)/M \right\}
\]
is balanced with overwhelming probability.

By symmetry we have
\begin{multline*}
\Pr\left[
  \exists~f_1, f_2,\ldots,f_r \in F_{k-1}~:~ X_{f_i} \leq d(t_0)/M,~1\leq i \leq r,~\{f_i\}~\text{balanced}
\right] \\
\leq
{ {n \choose k} \choose r}
\Pr\left[
X_{f_i} \leq d(t_0)/M, 1 \leq i \leq r,~\{f_i\}~\text{balanced}
\right].
\end{multline*}
Let $X$ denote the number of $k$-faces that contain some $f_i.$  If every $X_{f_i} \leq d(t_0)/M,$ it follows that $X \leq rd(t_0)/M.$  Each $f_i$ is contained in $n-k$ possible $k$-faces, but it may be possible that some $f_i$ and $f_j$ are both contained in a single $k$-face.  If this occurs, however, it must be that $|f_i \cap f_j|=k-1.$ In other words, each contains a common $(k-2)$-face.  Furthermore, there is at most one $k$-face that contains both $f_i$ and $f_j.$

A fixed face $f_j$ contains $k$ distinct $(k-2)$-faces $q_1,q_2,\ldots,q_k.$
As $\{f_i\}$ is balanced, each $q_l$ is contained in at most $n^{\epsilon}$ distinct $f_i.$  Thus there are at most $n^{\epsilon}k$ many $k$-faces that contain $f_j$ and some other $f_i,$ and this implies there are at least $r(n-k-n^{\epsilon}k)$ distinct possible $k$-faces that contain some $f_i.$ It follows that $X$ stochastically dominates a
\(
\Binomial\left( \bigl\lceil r(n-k-n^{\epsilon}k)\bigr\rceil, p(t_0)\right)
\)
variable.  Applying Lemma~\ref{magic}, we get
\begin{multline*}
\Pr\left[
X_{f_i} \leq d(t_0)/M, 1 \leq i \leq r,~\{f_i\}~\text{balanced}
\right]
\leq
\Pr\left[
X \leq rd(t_0)/M
\right]
\\
\leq
\exp\left(
-r(n-k-n^{\epsilon}k)p(t_0) + \tfrac{rd(t_0)}{M}(1 + \log\tfrac{M(1+r(n-k-n^{\epsilon}k))p(t_0)}{rd(t_0)}) 
\right).
\end{multline*}
Thus, we get
\begin{multline*}
\log \Pr\left[
\exists~f_1, f_2,\ldots,f_r~:~ X_{f_i} \leq d(t_0)/M,~1\leq i \leq r
\right] \\
\leq r\left[
(k\log n - \log r)-d(t_0) + \frac{d(t_0)}{M}( 1 + \log(M))
\right](1+o(1)).
\end{multline*}
Since $d(t_0) \geq (k - 1 + \delta) \log n - o(1),$ we can set $r = [n^{1 - \delta/2}]$ and make $M$ sufficiently large that
\[
(k\log n - \log r)-d(t_0) + \frac{d(t_0)}{M}( 1 + \log(M))
\to -\infty.
\]
Taking $\epsilon = \delta/4,$ we have shown the desired claim.
\end{proof}

With global control on the number of exceptional vertices, the proof now reduces to essentially a union bound over all later times and links.

\begin{lemma}
\label{lem:proc_easy}
There is a constant $C=C(k)$ so that with high probability,
every $\link(f)$ where $f \in \LMP[t][k]$ has dimension $(k-2)$ satisfies
\begin{enumerate}
\item {\bf Bounded degree condition (b.d.c)} Every vertex has degree at most $C d(t).$
\item {\bf Adjacency matrix} The adjacency matrix of the link satisfies
\[
\sup_{ \substack{ \|x\| = 1, x^t \one = 0 \\ \|y\| = 1 } } | x^tAy| \leq C\sqrt{d(t)}.
\]
\item {\bf Parallel eigenspaces} Setting $\fuzz = \fuzz[f][t]$ and $T$ to be the diagonal matrix of degrees of the link,
\[
\sup_{ \substack{ \|x\| = 1, \\ x^t T^{1/2}\one_{W} = 0 } } | x^t T^{-1/2} \one_{\fuzz^c}| \leq C\frac{\sqrt{n}}{d(t)}.
\]
\end{enumerate}
for all $t \geq t_0.$
\end{lemma}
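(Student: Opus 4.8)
The plan rests on the observation that for each $(k-2)$-face $f$ the link $\link(f)$ of $\LMP[t][k]$ is, marginally, the Erd\H{o}s--R\'enyi graph $G(n-k+1,p(t))$: two vertices outside $f$ span an edge of $\link(f)$ exactly when the corresponding $k$-face has been born by time $t$, and distinct $k$-faces are born independently. Thus each of (i)--(iii), at a fixed link and a fixed time $t\ge t_0$, is precisely an event bounded in Section~\ref{sec:pbnds}: (i) is the bounded-degree condition of Lemma~\ref{lem:regularity}, (ii) is Proposition~\ref{prop:adj}, and (iii) is the conclusion of Lemma~\ref{lem:T_flattens}, whose proof uses only Lemma~\ref{lem:T_distortion} and an upper bound on $|\fuzz|$, now applied with $\fuzz$ replaced by $\fuzz[f][t]$. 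Since $d(t)\ge d(t_0)$ stays of order $\log n$, each of these fails with probability at most $C(m)\,n^{-m}$ for every fixed $m$, by the stated tail bounds. I would fix a deterministic mesh $t_0=s_0<s_1<\cdots$ with $d(s_{j+1})-d(s_j)\asymp\sqrt{d(s_j)}$ (so $O(\sqrt n)$ mesh points suffice to reach $d\asymp n$, the region $d(t)\asymp n$ being trivial), take $m=m(k,\delta)$ large, and union bound over the $O(n^{k-1})$ links $f\in F_{k-2}$ and the mesh; together with Lemma~\ref{lem:supersmall} (which holds with overwhelming probability) this gives, with probability $1-o(1)$, that (i)--(iii) hold at every link and every $s_j$. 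It then remains to interpolate to all $t\ge t_0$, using $\LMP[s_j][k]\subseteq\LMP[t][k]\subseteq\LMP[s_{j+1}][k]$ and $d(t)=(1+o(1))d(s_j)$ for $t\in[s_j,s_{j+1}]$.

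Conditions (i) and (iii) interpolate with little trouble. For (i), link-degrees are monotone in $t$, so for $t\in[s_j,s_{j+1}]$ every link-degree is at most its value at $s_{j+1}$, hence at most $C\,d(s_{j+1})=(1+o(1))C\,d(t)$. For (iii), set $\fuzz=\fuzz[f][t]$; this is decreasing in $t$, with $|\fuzz|\le|\fuzz[f][t_0]|=O(n^{1/2-\epsilon})$ for some $\epsilon>0$ by Lemma~\ref{lem:supersmall}. Following the proof of Lemma~\ref{lem:T_flattens}, it suffices to control $\sum_{v\in\fuzz^c}(\deg_t v-d(t))^2/(\deg_t v\,d(t)^2)$ for $\link(f)$ at time $t$. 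Splitting this sum at $\deg_t v=d(t)/(2M)$, the high-degree part is $O(n/d(t)^2)$ by combining the $s_j$ and $s_{j+1}$ instances of Lemma~\ref{lem:T_distortion} with the facts that $\sum_v(\deg_{s_{j+1}}v-\deg_{s_j}v)^2$ and $n(d(s_{j+1})-d(s_j))^2$ are both $O(n\,d(s_j))$ once the mesh spacing is $\asymp\sqrt d$; the low-degree part contributes at most $1/\deg_t v\le M/d(t_0)$ per vertex, hence at most $n\cdot M/(d(t_0)d(t)^2)=O(n/d(t)^2)$ in all, so no careful count of the moderately-low-degree vertices is needed. The term $|x^tT^{1/2}\one_{\fuzz}|\le(d(t_0)|\fuzz[f][t_0]|)^{1/2}=o(\sqrt n)$ is handled as in Lemma~\ref{lem:T_flattens}.

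The adjacency-matrix bound (ii) is the step needing the most care, since neither $\sup_{x,y}|x^tA_ty|$ nor the event in (ii) is monotone in the graph. Write $A_t=A_{s_j}+B_t$ with $0\le B_t\le B_{s_{j+1}}$ entrywise, where $B_t$ is the adjacency matrix of the edges of $\link(f)$ born in $(s_j,t]$; the edge set of $B_{s_{j+1}}$ is distributed as $G(n-k+1,\,p(s_{j+1})-p(s_j))$. For unit $x\perp\one$ and unit $y$, bound $|x^tA_ty|\le|x^tA_{s_j}y|+|x^tB_ty|$; the first term is $\le C\sqrt{d(s_j)}=(1+o(1))C\sqrt{d(t)}$ by (ii) at $s_j$, and for the second the entrywise domination gives the monotone estimate $|x^tB_ty|\le|x|^tB_{s_{j+1}}|y|$. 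Writing $|x|=c_1\one+u$, $|y|=c_2\one+v$ with $u,v\perp\one$ and $|c_i|\le 1/\sqrt{n-k+1}$, the diagonal term $c_1c_2\,\one^tB_{s_{j+1}}\one$ equals $(1+o(1))(d(s_{j+1})-d(s_j))=O(\sqrt{d(s_j)})$, the two cross terms are $O(\sqrt{d(s_j)})$ by a degree-variance estimate for the (possibly sub-logarithmic) sparse graph $B_{s_{j+1}}$, and $|u^tB_{s_{j+1}}v|$ is bounded via the Kahn--Szemer\'edi estimate in its all-density form $\sup_{w\perp\one,\,z}|w^tB_{s_{j+1}}z|\le C\sqrt{\max((n-k+1)(p(s_{j+1})-p(s_j)),\log n)}=O(\sqrt{\log n})$ --- the bound that \cite{FKSz,FeigeOfek} and the proof of Proposition~\ref{prop:adj} actually establish. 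Since $d(s_j)\ge(\tfrac12+\delta)\log n$ (or $\ge(k-1+\delta)\log n$ for $k>1$), all of these are $O(\sqrt{d(t)})$, yielding (ii) for every $t\in[s_j,s_{j+1}]$. The real obstacle is not any one of these estimates but making all of them hold uniformly in $(x,y)$, in the link, and in $t$ at once: this is what forces the all-density (rather than $p\gtrsim\log n/n$) forms of the Kahn--Szemer\'edi and degree-variance bounds for the sparse increments $B_{s_{j+1}}$, and one must check that the (super-polynomially small) failure probabilities in those results survive the union over $O(n^{k-1})$ links and $O(\sqrt n)$ mesh points.
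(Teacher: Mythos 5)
Your overall scheme --- reduce to polynomially many snapshots of the process and union-bound the static estimates of Section~\ref{sec:pbnds} over links and snapshot times --- is the right one, and your identification of each link with $G(n-k+1,p(t))$ and of conditions (i)--(iii) with Lemma~\ref{lem:regularity}, Proposition~\ref{prop:adj} and Lemma~\ref{lem:T_flattens} matches the paper exactly. Where you diverge is the choice of mesh, and this changes the difficulty of the argument entirely. The paper takes the mesh extremely fine: $p_i=p(t_0)+in^{-2k-1}$, so the probability that two $k$-faces are ever born in the same mesh interval is $O(n^{2k}\cdot n^{-4k-2})$ per interval and $o(1)$ after summing over all $\sim n^{2k+1}$ intervals. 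Hence with high probability the complex $\LMP[t][k]$ at \emph{every} $t\ge t_0$ is literally equal to one of the snapshots $\LMP[t_i][k]$, and there is nothing to interpolate: one applies the three static results with $m$ chosen large enough that their failure probabilities $O(e^{-md})=O(n^{-m\delta'})$ beat the union over $O(n^{3k+1})$ link--snapshot pairs. This is exactly why those lemmas are stated with the free parameter $m$.

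Your coarser mesh of $O(\sqrt n)$ points forces an interpolation step, and for condition (ii) this is where your argument has a genuine unresolved dependency. You control the increment graph $B_{s_{j+1}}$, whose edge density is $\asymp\sqrt d/n\ll\log n/n$, via an ``all-density'' Kahn--Szemer\'edi bound $\sup_{w\perp\one,\,z}|w^tBz|=O(\sqrt{\log n})$ and a degree-variance estimate for the same sparse graph. Neither is established in the paper: Proposition~\ref{prop:adj} and Lemma~\ref{lem:T_distortion} are stated only for $p\ge\delta\log n/n$ and their proofs use that hypothesis. Moreover, in the sub-logarithmic regime these bounds do not hold with the super-polynomially small failure probability you assert --- already the necessary event $\Delta_{\max}(B_{s_{j+1}})=O(\log n)$ fails with probability that is only polynomially small, with an exponent that depends on the constant in the bound and must be tracked against the union over $O(n^{k-1})$ links and $O(\sqrt n)$ mesh points. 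You flag this check but do not carry it out, so as written the proof of (ii) is incomplete (though likely repairable with real additional work). The simpler fix is to shrink the mesh as the paper does, which makes the interpolation step, and with it any need for spectral estimates on sparse increments, disappear.
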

\begin{proof}
Let $I$ be the interval $[t_1,t_2],$ where $t_0 \leq t_1 \leq t_2.$  The probability that there are two faces that appear in this interval can be bounded by
\[
\Pr \left[
\exists~\sigma_1, \sigma_2~:~T_{\sigma_1} \in I \text{ and } T_{\sigma_2} \in I
\right] \leq { n \choose k}^2 \left( p(t_2)  - p(t_1) \right)^2.
\]
Let $r$ be the smallest integer so that $p(t_0) + rn^{-2k-1} \geq 1.$
Set $p_i = p(t_0) + i n^{-2k-1}$ for all $0 \leq i < r,$
and set $p_r = 1.$
Let $t_i$ be such that $p(t_i) = p_i,$ and set $t_r = \infty.$
Note that for $t \in [t_i, t_{i+1}),$ $\LMP[t][k] \neq \LMP[t_i][k]$ and $\LMP[t][k] \neq \LMP[t_{i+1}][k]$ implies there must be two faces $\sigma_1$ and $\sigma_2$ for which $T_{\sigma_1}, T_{\sigma_2} \in [t_i,t_{i+1}).$  Hence,
\begin{align*}
\Pr \left[
\exists ~t \geq t_0 ~:~ \LMP[t][k] \neq \LMP[t_i][k]~\forall~ 0 \leq i \leq r
\right]
&\leq \sum_{i=0}^{r-1}
\Pr \left[
\exists~\sigma_1, \sigma_2~:~T_{\sigma_1}, T_{\sigma_2} \in I
\right] \\
&\leq \sum_{i=0}^{r-1}
n^{-2k-2}
\leq n^{-2}.
\end{align*}

By applying Lemma~\ref{lem:regularity}, Proposition~\ref{prop:adj}, and Lemma~\ref{lem:T_flattens} with $m$ sufficiently large, we may thus assure
that there is a constant sufficiently large
that these properties occur
for all links of all $\LMP[t_i][k],$ for $0 \leq i \leq {r-1}.$
\end{proof}

\begin{lemma}
\label{lem:fuzz_process}
There is an $M=M(k,\delta)$ and a constant $C=C(M,k)$ so that with $t_1$ satisfying $p(t_1) = C \log n / n,$ all
\(
\fuzz[f][t] = \emptyset
\)
for $t \geq t_1$ with high probability.
Further, for all $t_1 \geq t \geq t_0$ every $\link(f)$ of $\LMP[t][k]$ satisfies
\begin{enumerate}
\item $|\fuzz| \leq \frac{n}{2},$
\item $\fuzz$ is an independent set,
\item and \( \max_{u \in \fuzz^c} \dset{u}{\fuzz} \leq 1\)
\end{enumerate}
with $\fuzz=\fuzz[f][t].$
\end{lemma}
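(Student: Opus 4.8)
The plan is to push everything onto the two endpoint times $t_0$ and $t_1$, using the two monotonicities in play. Because the degree cutoff $d(t_0)/M$ in \eqref{eq:fuzz_t} is frozen at $t_0$ while every link-degree is nondecreasing in $t$, the sets $\fuzz[f][t]$ shrink as $t$ grows, whereas the edge sets of the links $\link(f)$ of $\LMP[t][k]$ grow. Hence it suffices to: (i) check $|\fuzz[f][t_0]|\le n/2$; (ii) show $\fuzz[f][t_1]=\emptyset$, which then gives $\fuzz[f][t]=\emptyset$ for all $t\ge t_1$ since $\fuzz[f][t]\subseteq\fuzz[f][t_1]$; and (iii) rule out, for every $(k-2)$-face $f$, an edge of $\link_{\LMP[t_1][k]}(f)$ with both endpoints in $\fuzz[f][t_0]$, and a vertex of $\link_{\LMP[t_1][k]}(f)$ with two neighbours in $\fuzz[f][t_0]$. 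Indeed, a failure of the independence property or of $\max_{u\in\fuzz^c}\dset u{\fuzz}\le1$ at an intermediate $t\in[t_0,t_1]$ would, after replacing the moving fuzz by the larger set $\fuzz[f][t_0]$ and $\LMP[t][k]$ by the larger complex $\LMP[t_1][k]$, already be present at time $t_1$; so no discretisation of $[t_0,t_1]$ is needed. We first fix $M=M(k,\delta)$ large enough for the tail estimates below, and then fix $C=C(M,k)$.

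For (i): $\fuzz[f][t]\subseteq\fuzz[f][t_0]$, and the counting argument from the first half of the proof of Lemma~\ref{lem:supersmall} (equivalently part (i) of Proposition~\ref{prop:key_structure}), which works for all large $M$, already gives $\max_{f\in F_{k-2}}\bigl|\fuzz[f][t_0]\bigr|\le n^{\epsilon}\le n/2$ with overwhelming probability. For (ii): $\fuzz[f][t_1]=\emptyset$ is exactly the statement that every $(k-1)$-face of $\LMP[t_1][k]$ lies in more than $d(t_0)/M$ many $k$-faces. The number of $k$-faces containing a fixed $(k-1)$-face is $\Binom(n-k,p(t_1))$, with mean $\sim C\log n$, while $d(t_0)/M\sim(\alpha/M)\log n$, where $\alpha:=np(t_0)/\log n$ is the constant fixed by the definition of $t_0$; taking $C$ large makes the binomial lower-tail bound (Lemma~\ref{magic}) at most $n^{-(k+1)}$ for each fixed $(k-1)$-face, and a union bound over the $\binom nk$ of them completes this part.

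The core is (iii), which I would prove by the scheme of parts (ii)--(iii) of Proposition~\ref{prop:key_structure}, but tracking the i.i.d.\ clocks $\{T_\sigma\}$ rather than conditioning on link-degrees, so that the "small-degree at $t_0$" events stay independent of the "edge present at $t_1$" events. For the edge case, fix $f\in F_{k-2}$ and distinct vertices $v,w$: the event $v\in\fuzz[f][t_0]$ is implied by the statement that the number of $k$-faces \emph{other than} $f\cup\{v,w\}$ containing $f\cup\{v\}$ at time $t_0$ is at most $d(t_0)/M$, and similarly for $w$; these two truncated counts depend on disjoint families of clocks (a $k$-face containing both $f\cup\{v\}$ and $f\cup\{w\}$ must be $f\cup\{v,w\}$, excluded from both) and both are independent of $T_{f\cup\{v,w\}}$, the clock of the edge $\{v,w\}$ in $\link(f)$. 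So the joint probability is at most $p(t_1)\cdot\Pr\bigl[\Binom(n-k-1,p(t_0))\le d(t_0)/M\bigr]^2$, which by Lemma~\ref{magic} is at most $p(t_1)\,n^{-2\alpha(1-\eta)}$ for any $\eta>0$ once $M$ is large. Since $p(t_1)=\Theta(\log n/n)$ and there are $O(n^{k+1})$ triples $(f,v,w)$ (only $O(n^2)$ when $k=1$, as then $f=\emptyset$), the total is $O\bigl(n^{k-2\alpha(1-\eta)}\log n\bigr)\to0$, because $2\alpha>k$ in every case: $2\alpha=1+2\delta>1=k$ for $k=1$, and $2\alpha=2(k-1+\delta)>k$ for $k\ge2$. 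The two-neighbours case is identical, except that one removes from the truncated count for $v$ both $f\cup\{u,v\}$ and $f\cup\{v,w\}$ (and symmetrically for $w$), so that the two counts again depend on disjoint clock families independent of $T_{f\cup\{u,v\}}$ and $T_{f\cup\{u,w\}}$; one pays $p(t_1)^2$ and sums over the $O(n^{k+2})$ (resp.\ $O(n^3)$ when $k=1$) quadruples $(f,u,v,w)$, reaching the same exponent.

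The main obstacle is exactly this independence bookkeeping: since membership in $\fuzz[f][t_0]$ is decided at time $t_0$ while the forbidden edges live at time $t_1$, the degree-conditioning of Proposition~\ref{prop:key_structure} no longer decouples the relevant events, and one must identify the finitely many clocks shared between the "small-degree" and "edge" events and truncate the degree counts to discard them before multiplying tail bounds. Once this is set up, every remaining estimate is one of the binomial lower-tail bounds already used for Proposition~\ref{prop:key_structure} and Lemma~\ref{lem:supersmall}, and the lemma follows by a union bound over the finitely many bad events listed in (i)--(iii).
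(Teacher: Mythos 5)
Your proposal is correct, and the reductions in steps (i) and (ii) (monotonicity of $\fuzz[f][t]$ in $t$, plus a lower-tail bound and union bound at the single time $t_1$) match the paper's. Where you genuinely diverge is step (iii). The paper handles the persistence of the independence and single-neighbour properties \emph{dynamically}: it conditions on the filtration $\filt(\tau_i)$ at each face-addition time, bounds the conditional probability that the newly added face $\Delta_{i+1}$ lands badly by $|\fuzz[f][\tau_i]|^2/(|F_k|-|\LMP[\tau_i][k]|)$ (times a degree factor for the neighbour condition), and then sums over the $O(n^k\log n)$ faces added in $[t_0,t_1]$ using the global control $\sum_f|\fuzz[f][t_0]|^2\le n^{1-\epsilon}$ from Lemma~\ref{lem:supersmall}. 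You instead run a \emph{static} first-moment bound over triples $(f,v,w)$ (resp.\ quadruples), paying for the correlation between ``$v$ has small link-degree at $t_0$'' and ``the edge $\{v,w\}$ is present at $t_1$'' by truncating the degree counts to exclude the shared clock(s), after which the relevant events are genuinely independent; your exponent count $2\alpha>k$ checks out in all cases. Each route buys something: the paper's filtration argument never has to untangle dependencies between the fuzz membership and the forbidden edges (the uniform choice of the next face does that automatically), but it leans on the $\ell^2$ bound of Lemma~\ref{lem:supersmall}; your argument is self-contained at the level of individual binomial tails and dispenses with Lemma~\ref{lem:supersmall} entirely for this lemma, at the cost of the explicit clock-bookkeeping, which you have set up correctly (the only $k$-face containing both $f\cup\{v\}$ and $f\cup\{w\}$ is $f\cup\{v,w\}$, so the truncated counts depend on disjoint clock families).
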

\begin{proof}
There is an $M_1$ so that this holds for $\LMP[t_0][k]$ by Proposition~\ref{prop:key_structure} and by taking the union bound over all links.  Likewise, there is an $M_2$ so that the conclusions of Lemma~\ref{lem:supersmall} holds.  Take $M$ to be the maximum of these, and note that from monotonicity, the conclusions of both the proposition and lemma hold.  As $\fuzz[f][t]$ is monotone in $t$ also, we have that
\[
|\fuzz[f][t]| \leq
|\fuzz[f][t_0]| \leq
n/2
\]
is satisfied for all $n$ sufficiently large.

From a union bound and Lemma~\ref{magic}, we may choose $C=C(M,k)$ sufficiently large so that with probability going to $1,$
\[
\fuzz[f][t_1] = \emptyset
\]
for all $f \in F_{k-2}.$

Let $\tau_i$ be the times at which the $i^{th}$ face is added to $\LMP[t][k]$ after time $t_0,$ and let $\tau_0 = t_0.$  Likewise, let $\Delta_i$ denote the $i^{th}$ face, and let $\filt(\tau_i) = \sigma(\LMP[\tau_i][k]).$   Let $N$ denote the largest $i$ so that $\tau_i \leq C\log n / n.$ From Chernoff bounds, there are at most $100 C (\log n) n^{k}$ many $k$-dimensional faces in $\LMP[t_1][k]$ with overwhelming probability, and hence $N \leq 100 (\log n) n^k$ with overwhelming probability.

We begin by bounding the probability that a newly added face creates an edge between two vertices of $\fuzz[f][t]$ for some $f \in F_{k-2}.$
\begin{align}
\label{eq:indset}
\Pr\left[
\exists~ u,v \in \fuzz[f][\tau_i]~:~u,v \in \Delta_{i+1} \middle \vert \filt(\tau_i)
\right]
&\leq
\frac{
| \fuzz[f][\tau_i] |^2
}{
|F_k| - |\LMP[\tau_i][k]|
} \\
\notag
&\leq
\frac{
| \fuzz[f][\tau_0] |^2
}{
|F_k| - |\LMP[t_1][k]|
}.
\end{align}
Let $E_{i,f}$ denote the event that
\begin{enumerate}
\item
  the number of $k$--dimensional faces
\(
|\LMP[t_1][k] | \leq 100 C n^k\log n,
\)
\item
\(
\sum_{f \in F_{k-2}}
| \fuzz[f][t_0] |^2 \leq n^{1 - \epsilon},
\)
\item there exists $u$ and $v$ in $\fuzz[f][\tau_i]$ so that $u \in \Delta_{i+1}$ and $v \in \Delta_{i+1}.$
\end{enumerate}
By conditioning, we have that
\begin{align*}
\Pr
\left[
\cup_{i,f} E_{i,f}
\right]
&\leq
\Exp
\sum_{i=0}^N
\sum_{f \in F_{k-2}}
\frac{
| \fuzz[f][\tau_0] |^2 \one[E_{i,f}]
}{
|F_k| - |\LMP[t_1][k]|
} \\
&\leq
\Exp
\sum_{i=0}^N
\frac{
\sum_{f \in F_{k-2}}
| \fuzz[f][\tau_0] |^2 \one[E_{i,f}]
}{
|F_k| - 100C n^k \log n
} \\
&\leq
\Exp
\sum_{i=0}^N
\frac{
n^{1-\epsilon}\one[\LMP[t_1][k] \leq 100Cn^k\log n]
}{
|F_k| - 100C n^k \log n
} \\
&\leq
\frac{
{(100Cn^k\log n)}
n^{1-\epsilon}
}{
|F_k| - 100C n^k \log n
} = O(n^{-\epsilon} \log n).
\end{align*}
Thus with high probability, no face added between $t_0$ and $t_1$ creates an edge between two elements of any $\fuzz[f][t].$

We now turn to bounding the probability that a newly added face connects an element of $\fuzz[f][t]$ to a neighbor of $\fuzz[f][t].$  Let $\mathcal{N}_M^f(t)$ be the set of neighbors of $\fuzz[f][t],$ and let $D(t)$ be an upper bound for the degree of a vertex of any link of $\LMP[t][k].$  Note that
\(
|\mathcal{N}_M^f(t)| \leq D(t) | \fuzz[f][t] |.
\)
Then
\begin{align*}
\Pr\left[
\exists~ u\in \fuzz[f][\tau_i],
v\in \mathcal{N}_M^f(t)
~:~u,v \in \Delta_{i+1} \middle \vert \filt(\tau_i)
\right]
\leq
\frac{
D(\tau_i)| \fuzz[f][\tau_i] |^2
}{
|F_k| - |\LMP[\tau_i][k]|
}.
\end{align*}
With high probability, there is a constant $K$ so that all the degrees can be bounded by $K \log n$ for all $t \leq t_1.$  This failure probability is at most a logarithmic factor more than the failure probability in~\eqref{eq:indset}.  Hence the same proof shows that with high probability, no added face increases
\[
\max_{u \in \vertexsetof(\link(f)) \setminus \fuzz[f][t]} e(u,\fuzz[f][t]).
\]
\end{proof}

\begin{pfofthm}{Theorem~\ref{thm:stopping_time}}
We are essentially ready to apply Lemma~\ref{lem:deterministic}.  The only concern is that in~\eqref{eq:fuzz_t}, the set $\fuzz[f][t]$ is defined in terms of $d(t_0)$ and not $d(t).$  However, as noted in Lemma~\ref{lem:fuzz_process}, all these sets disappear once $p(t_1)=C\log n / n,$ at which point $d(t)$ has only risen by a factor of $K=\frac{p(t_1)}{p(t_0)}.$  Thus,
\[
Q^f(t) = \{
w \in \vertexsetof(\link(f))~:~ \deg_{\link(f)}(w) \leq d(t)/KM
\}
\subseteq \fuzz[f][t],
\]
for all $t \leq t_1,$ and by monotonicity, all the desired properties of $\fuzz[f][t]$ transfer to $Q^f(t).$
Thus Lemmas~\ref{lem:proc_easy} and~\ref{lem:fuzz_process} show all the needed properties of
Lemma~\ref{lem:deterministic} hold, completing the proof.
\end{pfofthm}

\section{Cohomology structure theorem} \label{sec:cohom}

The structure theorem for cohomology relies on the following theorem of \BS~\cite{BS}.  A simplicial complex $\Delta$ is called \emph{pure} $k$--dimensional if it is $k$--dimensional and every face is contained in a $k$--dimensional one.

\begin{BSC} \label{thm:BStool}
If \( \Delta \) is a finite, pure \(k\)-dimensional simplicial complex,
so that for every $(k-2)$-dimensional face $\sigma,$
the normalized Laplacian $L= L [\link(\sigma) ]$ satisfies
\(
\lambda_2  > 1- \frac{1}{k}
\)
then \(H^{k-1}(\Delta, \mathbb{Q}) =0.\)
\end{BSC}

\begin{proof}[Proof of Theorem~\ref{thm:Hhit}]
  Recall that we define $t_0$ so that $p(t_0) = (k-1+\delta)\log n / n.$  Let $\tilde{Y_t}$ denote the simplicial complex $\LMP[t][k]$ with all its isolated $(k-1)$-faces deleted.
By Theorem~\ref{thm:stopping_time}, w.h.p. for all $t \geq t_0$, all links of $\tilde{Y_t}$ have $\lambda_2(L) = 1-o(1).$

We need to check that $\tilde{Y_t}$ is pure $k$-dimensional, i.e. that every face is contained in some $k$-dimensional face.  Note that this can only fail if there is some $(k-2)$-dimensional face of $\LMP[t][k]$ that is not contained in any $k$-dimensional face.  As this is a monotone property, it suffices to check that $\LMP[t_0][k]$ has no such $(k-2)$-faces.

Put $I$ to be the number of isolated $(k-2)$-faces in $\LMP[t_0][k].$  Then
\[
\Exp I = {n \choose k-1} (1-p(t_0))^{ n^2/2}(1-o(1)),
\]
which decays exponentially in $n.$  Hence, $\tilde{Y_t}$ is pure $k$-dimensional w.h.p.\ for all $t \geq t_0$, and so Theorem~\ref{thm:BStool} applies.  It follows that $H^{k-1}( \tilde{Y_t}, \Q) = 0,$ and it remains to compare $H^{k-1}( \tilde{Y_t}, \Q)$ and $H^{k-1}( \LMP[t][k], \Q).$

For what remains, fix $t \geq t_0.$
It will follow from induction that each additional $(k-1)$-face we glue to $\tilde{Y}$ increases the dimension of the $(k-1)$-cohomology by $1.$  Let $Z$ be the complex formed by including one of the isolated $(k-1)$-faces of $Y$ back into $\tilde{Y}.$  Let $B$ be a neighborhood of the included $(k-1)$-face that is homotopic to a single $(k-1)$-simplex.  Then the Mayer-Vietoris sequence (see Chapter 3 of~\cite{Hatcher}) for the $(k-1)$-dimensional cohomology is
\[
\cdots
\rightarrow
H^{k-1}(Z,\Q)
\rightarrow
H^{k-1}(\tilde{Y},\Q) \oplus
H^{k-1}(B,\Q)
\rightarrow
H^{k-1}(\tilde{Y} \cap B,\Q)
\rightarrow
H^{k}(Z,\Q).
\]
As \(\tilde{Y} \cap B\) is homotopic to a $(k-2)$-dimensional sphere,
\( H^{k-1}(\tilde{Y} \cap B,\Q) = 0 \).  Also,
\(H^{k-1}(B,\Q) = \Q \), and so this sequence becomes
\[
0
\rightarrow
H^{k-1}(Z,\Q)
\rightarrow
H^{k-1}(\tilde{Y},\Q) \oplus \Q
\rightarrow
0,
\]
or otherwise stated,
\( H^{k-1}(Z,\Q) \cong
H^{k-1}(\tilde{Y},\Q) \oplus \Q \).  Each additional isolated $(k-1)$-faces increases the dimension by one by the very same argument, which completes the proof.
\end{proof}

\section{Property (T)} \label{sec:propT}

The proof here is nearly identical to the proof of the cohomology vanishing structure theorem.
To establish our results concerning property (T) of random fundamental groups, we will use the following theorem of \.Zuk.

\begin{Zuk} \label{thm:tool}
If $X$ is a pure $2$-dimensional locally-finite simplicial complex so that for every vertex $v$, the vertex link $\link(v)$ is connected and the normalized Laplacian $L= L [\link(v) ]$ satisfies $\lambda_2(L) > 1/2$, then $\pi_1(X)$ has property~(T).
\end{Zuk}

\begin{proof}[Proof of Theorem \ref{thm:structure}]






  Recall that we define $t_0$ so that $p(t_0) = (1+\delta)\log n / n.$  Let $\tilde{Y_t}$ denote the simplicial complex $\LMP[t][2]$ with all its isolated edges deleted.
By Theorem~\ref{thm:stopping_time}, w.h.p. for all $t \geq t_0$, all links of $\tilde{Y_t}$ have $\lambda_2(L) = 1-o(1).$  Then by \emph{\.Zuk's criterion}, $\pi_1(\tilde{Y_t})$ has property~(T) for all $t \geq t_0.$

Fix $t \geq t_0.$
It only remains to compare the fundamental groups $\pi_1(\tilde{Y})$ and $\pi_1(Y)$. But attaching a $1$-cell to a connected CW complex $W$ adds a free $\Z$-factor to the fundamental group $\pi_1(W)$, by the Seifert--van Kampen theorem (see Theorem 1.20 of~\cite{Hatcher}). So we only need to check that deleting all the isolated edges in $Y$ does not result in a disconnected complex $\tilde{Y}$.

Removing less than $n-1$ edges from the complete graph $K_n$ can not disconnect it; indeed, to separate a component of order $k$ form the rest of the graph requires removing at least $k(n-k)$ edges, which is minimized when $k=1$.  Thus we need only check that the number of isolated edges is fewer than $n-1.$  From monotonicity, it suffices to show that at time $t_0$ the number of isolated edges is w.h.p. $o(n).$  

By linearity of expectation, the expected number of edges deleted $\expect[D]$ is given by
\begin{align*}
\expect[D] &= { n \choose 2} (1 - p(t_0) )^{n-2}\\
& \le \frac{1}{2} n^2 \exp ( -p(t_0) (n-2) )\\
& \le O \left( n^{1-c} \right)
\end{align*}
for some constant $c > 0$.  By the second moment method, for example, $D$ is tightly concentrated around its mean, so w.h.p.\ $\tilde{Y}$ is connected. The claim follows.
\end{proof}

Corollary \ref{cor:Twin} quickly follows.

\begin{proof}[Proof of Corollary \ref{cor:Twin}]
Let $I$ denote the number of isolated edges. The expected number of isolated edges $\expect[I]$ is
\begin{align*}
\expect [i] &= {n \choose 2} (1 - p)^{n-2} \leq n^2 e^{-np}
\end{align*}
Taking $p = (2\log n + f(n))/n,$ where $f(n) \to \infty,$ this is seen to go to $0,$ completing the proof.
\end{proof}

\section{Kahn-Szemer\'erdi argument}
\label{sec:KSz}
In this section we give the proof of Proposition~\ref{prop:adj} and Lemma~\ref{lem:regularity}, which are minor modifications of the standard Kahn--Szemer\'erdi argument.

We begin with a proof of the regularity conditions.
\begin{pfofthm}{Lemma \ref{lem:regularity}}
For any vertex $v,$
$\deg(v)$ is a binomial random variable with mean $d>\delta \log(n)$. By Lemma \ref{maybe},
\(
\prob(\deg(v)>\csubzero  d) \leq \exp\left(-\tfrac{d \csubzero  \log \csubzero }{3}\right)
\)
provided $\csubzero > 4.$
Thus taking the union bound over all vertices, we get that
\[
\Pr\left[
\bdc \text{ fails}
\right] \leq \exp( d (\tfrac{1}{\delta}-\tfrac{\csubzero  \log \csubzero }{3})).
\]
By taking $\csubzero$ sufficiently large, we may take
\[
\frac{1}{\delta}-\frac{\csubzero  \log \csubzero }{3} \leq -m,
\]
completing the proof of the first claim.

We will now turn to showing the discrepancy property, for which we need to show there are constants $c_i=c_i(\delta,m)$ so that at least one of
\begin{enumerate}
\item
$
\tfrac{e(A,B)}{\mu(A,B)} \leq \csubone
$
\item
$
e(A,B)\log \tfrac{e(A,B)}{\mu(A,B)} \leq \csubtwo( |A| \vee |B|)\log \tfrac{n}{|A| \vee |B|}
$
\item
$
|A| \vee |B| \leq d^{1/4}/100
$
\end{enumerate}
Note that these properties are monotone in $c_i$, and so we are free to increase the constants as need be throughout the proof.

Let $D$ be the event that the discrepancy condition fails and let $D(A,B)$ be the event that the discrepancy condition fails for sets $A$ and $B$. Then by the union bound
\begin{eqnarray*}
\prob(D)
&\leq& \prob(\exists A,B \text{ with $|A| \wedge |B|\geq n/e$}): D(A,B) \text{ occurs})\\
&+& \prob(\exists A,B \text{ with $|A| \vee |B| \geq n/e \geq |A| \wedge |B|$}): D(A,B) \text{ occurs})\\
&+&\sum_{A,B: \ |A| \vee |B|< n/e}\prob(D(A,B))
\end{eqnarray*}
Taking $\csubone > e^2$, then
when $|A| \wedge |B| \geq \tfrac{n}{e},$
\[
e(A,B)>\csubone \mu(A,B)>\csubone (n/e)^2d/n>nd.
\]
Thus, there are at least $nd$ edges in the graph. The distribution of the number of edges is binomial with mean $n(n-1)p/2=nd/2$, and so the probability of this is going to zero exponentially in $nd$, i.e.
\begin{equation} \label{verystupid}
\prob(\exists A,B \text{ with $|A| \wedge |B|\geq n/e$}): D(A,B) \text{ occurs})= O(\exp(-cnd))
\end{equation}
for some absolute constant $c >0.$

If $|A| \vee |B| \geq \tfrac{n}{e}> |A| \wedge |B|,$
and if the bounded degree condition holds, then
$e(A,B) \leq (|A| \vee |B|)\csubzero d$ and
\[
\frac{e(A,B)}{\mu(A,B,n)} \leq \frac{\csubzero nd(|A| \vee |B|)}{|A||B|d} = \frac{\csubzero n}{|A| \wedge |B|} \leq \csubzero e.
\]
Thus taking $\csubone > \csubzero e$, we have that
\begin{eqnarray}
\prob(\exists A,B \text{ with $|A| \vee |B| \geq n/e \geq |A| \wedge |B|$}): D(A,B) \text{occurs})
&\leq& \prob(\bdc \text{fails}) \nonumber\\
&=&O(\exp(-md)). \label{twoam}
\end{eqnarray}

Now we need to deal with the case that both $A$ and $B$ are less than $\tfrac{n}{e},$ but at least one is greater than $d^{1/4}/100.$  Take $\csubtwo > 18 + 1200m.$  For emphasis, we will write $\mu(A,B,n)=\mu(A,B) = \frac{|A||B|d}{n}.$ Choose $r=r(A,B,n)=\csubone  \vee r_1$ where $r_1$ is the solution to
\[
\mu(A,B,n) r_1\log(r_1) = \csubtwo(|A| \vee |B|)\log \tfrac{n}{|A| \vee |B|}.
\]

For any $A$, $B$  and $n$ we must have either
\begin{itemize}
\item $e(A,B) \leq r \mu (A,B,n)$ and $r =\csubone $
\item $e(A,B) \leq r \mu (A,B,n)$ and $r =r_1$ or
\item $e(A,B)>r \mu(A,B,n)$
\end{itemize}
Thus if $D(A,B)$ occurs then at least one of the following three events occur.
\begin{itemize}
\item $D_1=D_1(A,B)=\bigg\{e(A,B)\leq r \mu(A,B,n),\,  r=\csubone  \text{ and }\\
{} \hspace{1.9in} e(A,B)> \csubone  \mu(|A|,|B|,n) \bigg\}$
\item $D_2=D_2(A,B)=\bigg\{e(A,B)\leq r{\mu(A,B,n)}, r=r_1 \text{ and } \\
{} \hspace{1.9in} e(A,B)\log \tfrac{e(A,B)}{\mu(A,B,n)}>
\csubtwo (|A| \vee |B|)\log \tfrac{n}{|A| \vee |B|}\bigg\}$
\item $D_3=D_3(A,B)=\{e(A,B)>r \mu(A,B,n)\}$
\end{itemize}

For $D_1$ the conditions are mutually exclusive as $e(A,B)$ can not be
simultaneously greater than and less than or equal to $\csubone \mu(A,B,n)$. Thus $D_1(A,B)$ is empty.
For $D_2$ we get similar contradiction after a little work.
\begin{eqnarray*}
e(A,B) \log\tfrac{e(A,B)}{\mu(A,B,n)}
&>&\csubtwo (|A|\vee |B|)\log \tfrac{n}{|A| \vee |B|}\\
e(A,B) \log\tfrac{e(A,B)}{\mu(A,B,n)}
&>& \mu(A,B,n)r_1\log r_1 \\
\tfrac{e(A,B)}{\mu(A,B,n)} \log\tfrac{e(A,B)}{\mu(A,B,n)}
&>& r_1 \log r_1\\
\tfrac{e(A,B)}{\mu(A,B,n)}  &>& r_1 \\
e(A,B)  &>& r_1 \mu(A,B,n) \\
e(A,B)  &>& r \mu(A,B,n).
\end{eqnarray*}
This is a contradiction so $D_2(A,B)$ is also empty.

Now we bound $\prob(D_3(A,B)).$
As $e(A,B)$ is binomial with mean at most $\mu(A,B,n)$, Lemma \ref{maybe} implies
\[
\prob(D_3(A,B)) \leq \exp\left(-\tfrac{\mu(|A|,|B|,n) r \log r}{3}\right)
\]
for any $r\geq 4.$

For all $A,B$ we have $D \subset D_1 \cup D_2 \cup D_3$ and $\prob(D_1(A,B))=\prob(D_2(A,B))=0$. Combining this with (\ref{verystupid}) and (\ref{twoam}) we get
\begin{eqnarray*}
\prob(D)
& \leq & \prob(\exists A,B:\ D(A,B) \text{ occurs})\\
& \leq & \prob(\exists A,B:\ |A|,|B|<n/e \text{ and } D(A,B) \text{ occurs})+O(\exp(-md))\\
& \leq & \prob(\exists A,B:\ |A|,|B|<n/e \text{ and } D_3(A,B) \text{ occurs})+O(\exp(-md))\\
& \leq &
\sum_{|A|,|B|}\prob(D_3(A,B)) +O(\exp(-md)) \\
& \leq &
\sum_{a,b} \sum_{|A|=a,|B|=b}
\exp\left(-\tfrac{\mu r \log r}{3}\right) +O(\exp(-md))\\
& \leq & \sum_{a,b}
{n \choose a} {n\choose b}\exp\left(-\tfrac{\mu(a,b,n) r \log r}{3}\right) +O(\exp(-md)),
\end{eqnarray*}
where the sums are over all pairs $(a,b)$ with $d^{1/4}/100 \leq a \vee b \leq n/e.$
To evaluate the last term we get
\begin{eqnarray*}
\tfrac{\mu r \log r}{3}
& \geq &\bigg(6 + 400m\bigg)\bigg((|A| \vee |B|) \log\tfrac{n}{|A| \vee |B|}\bigg)\\
& > &
\bigg(2+2+2+(400m))\bigg)\bigg((|A|\vee |B|) \log\tfrac{n}{|A|\vee|B|}\bigg)\\
& >&
2|A|( \log\tfrac{n}{|A|})
+2|B|( \log\tfrac{n}{|B|})
+2\log n
+4md^{1/4}\log\tfrac{100n}{d^{1/4}}
\\
& >&
|A|(1 + \log\tfrac{n}{|A|})
+|B|(1 + \log\tfrac{n}{|B|})
+2\log n
+3md^{1/4}\log n.
\end{eqnarray*}
The first line is due to the definitions of $r$ and $\csubtwo$. In the third line we use the monotonicity of $x \log \tfrac{n}{x}$ on $[1,n/e]$ by
substituting in $|A|$, $|B|,$ $1$ and $d^{1/4}/100$ for $x$. In the fourth line we use that
$|A| \vee |B| \leq \tfrac{n}{e}$  so $\log \tfrac{n}{|A|},\log \tfrac{n}{|B|}>1$

Exponentiating we get
\[
\exp\left[
\tfrac{\mu r \log r}{3}
\right]
\geq \left(\tfrac{en}{|A|}\right)^n
\left(\tfrac{en}{|B|}\right)^n
n^2 \exp(3md^{1/4}(\log n))
\]
It follows that
\begin{eqnarray*}
{n \choose a} {n\choose b}\exp\left(-\tfrac{\mu(a,b,n) r \log r}{3}\right)
&\leq & {n \choose a}{n \choose b}\left(\tfrac{en}{a}\right)^{-n}
\left(\tfrac{en}{b}\right)^{-n} n^{-2}\exp(-3md^{1/4}\log n)\\
&\leq & n^{-2}\exp(-3md^{1/4}\log n).
\end{eqnarray*}

Putting this together we get
\begin{eqnarray*}
\prob(D)
& \leq &
\sum_{d/100 \leq a \vee b \leq n/e }{n \choose a} {n\choose b}\exp\left(-\tfrac{\mu(a,b,n) r \log r}{3}\right) +O(\exp(-md))\\
& \leq &
n^2n^{-2}\exp(-3md^{1/4}\log n)+O(\exp(-md)).
\end{eqnarray*}
%
%
Thus the lemma is satisfied.\end{pfofthm}

We finally give a quick sketch of how Proposition~\ref{prop:adj} follows from Lemma~\ref{lem:regularity}.  This is nearly the same as Theorem 2.5 of~\cite{FeigeOfek}, and so we will cite heavily.
\begin{pfofthm}{ Proposition \ref{prop:adj}}

We recall that we wish to bound
\[
\sup_{ \substack{ \|x\| = 1, x^t \one = 0 \\ \|y\| = 1 } } | x^tAy| \leq C\sqrt{d}.
\]
For this we will relax the supremum to a finite, discrete space.
Define
\[
\mathcal{U}=\left\{ \frac{ z}{2\sqrt{n}} ~:~ z \in \Z^n, \|z\|^2 \leq 4n\right\}
~~~~~\text{ and }~~~~~
\mathcal{T}=\left\{ z \in \mathcal{U} ~:~ z \perp \one \right\}.
\]
As $\mathcal{U}$ is $\tfrac{1}{2}$-net of the sphere, and $S=\{x~:\|x\| = 1, x^t \one = 0\}$ is in the convex hull of $\mathcal{T}$ (by Lemma 2.3 of~\cite{FeigeOfek}), we have that
\[
\sup_{ \substack{ \|x\| = 1, x^t \one = 0 \\ \|y\| = 1 } } | x^tAy| \leq
4
\sup_{ \substack{ x \in \mathcal{T} \\ y \in \mathcal{U}} } | x^tAy|.
\]
Further, we have that $|\mathcal{T}| \leq |\mathcal{U}| \leq C^n$ for some absolute constant $C.$

For a fixed pair of vectors $(x,y) \in \mathcal{T} \times \mathcal{U},$ define the {\bf light couples} $\mathcal{L} = \mathcal{L}(x,y)$ to be all those ordered pairs $(u,v) \in \{1,2,\dots,n\}^2$ so that $|x_uy_v| \leq \tfrac{\sqrt{d}}{n},$ and let the {\bf heavy couples} $\mathcal{H}=\mathcal{H}(x,y)$ be all those pairs that are not light. We will use the notation
\[
\lc =
\sum_{ (u,v) \in \mathcal{L}} x_uA_{uv}y_v,
\]
and the notation
\[
\hc =
\sum_{ (u,v) \in \mathcal{H}} x_uA_{uv}y_v,
\]

For the light couples, we recall Bernstein's inequality, which says that for independent, centered random variables $\left\{ X_i \right\}_1^N$ such that $|X_i| \leq M$ almost surely for all $1 \leq i \leq N$ and all $t \geq 0,$
  \[
    \Pr\left[ \sum_{i=1}^N X_i > t \right] \leq
    \exp\left( 
    \frac{-t^2}{2\sum_{i=1}^N \Exp X_i^2 + \frac{2}{3}Mt}
    \right).
  \]
  To realize $\lc$ as a sum of independent variables, we need to account for the symmetry in $A.$  Let $N$ be the number of undirected edges $\left\{ u,v \right\}$ so that either $\left( u, v\right)$ or $\left( v,u \right)$ appear in $\mathcal{L}.$  Enumerate these edges and define for $i$ with $1\leq i \leq N$ corresponding to $\left\{ u,v \right\},$
\[
  X_i = 
  (A_{uv}-p)x_uy_v \one[\left( u,v \right) \in \mathcal{L}]
  +(A_{uv}-p)x_vy_u \one[\left( v,u \right) \in \mathcal{L}].
\]
For our purposes, it will be enough to use the bound
\[
\sum_{i=1}^N \Exp X_i^2 \leq
\sum_{i=1}^N 2p \left\{ (x_uy_v)^2 + (x_vy_u)^2 \right\}
\leq 2p \sum_{(u,v)} x_u^2y_v^2 \leq 2p,
\]
where we have used the normalization of the vectors.
In summary, by Bernstein's inequality,
\[
    \Pr\left[ |\lc - \Exp \lc| > t \right] \leq
    \exp\left( 
    \frac{-nt^2}{4d + \frac{2}{3}\sqrt{d}t}
    \right).
\]
To control the expectation, note that on account of $x \in \mathcal{T},$
\[
  \Exp \lc + \Exp \hc = 0
\]
However,
\[
  |\Exp \hc| \leq 
\sum_{ (u,v) \in \mathcal{H}} p|x_uy_v|
\leq
\sum_{ (u,v) \in \mathcal{H}} 
\frac{np}{\sqrt{d}}
|x_uy_v|^2
\leq \sqrt{d}.
\]
As $\mathcal{T}$ is only of cardinality $e^{O(N)},$ for each $m$ there is a constant $C=C(m)$ so that
\[
\Pr \left[
\sup_{(x,y) \in \mathcal{T} \times \mathcal{U}} |\lc| > C\sqrt{d}
\right] \leq Ce^{-mn}.
\]

To control the heavy couples, we use the discrepancy property (c.f. Corollary 2.11 of~\cite{FeigeOfek} or Section 2.3 of~\cite{FKSz}).  The proof is nearly identical to either of those two claims, although it is not exactly either one, on account of the slightly altered definition of discrepancy.
\begin{lemma}
\label{lem:discrepancy}
Suppose $c_1,c_2,C_1$ are constants greater than $1$ and $d>0.$
There is a constant $C>0$ depending only on $c_1,c_2,C_1$ 
so that for any graph with the property that all degrees are bounded by $C_1d$ and for all subsets $A$ and $B$ of vertices
\begin{enumerate}
\item
$
\tfrac{e(A,B)}{\mu(A,B)} \leq \csubone
$
\item
$
e(A,B)\log \tfrac{e(A,B)}{\mu(A,B)} \leq \csubtwo( |A| \vee |B|)\log \tfrac{n}{|A| \vee |B|}
$
\item
$
|A| \vee |B| \leq d^{1/4}/100
$
\end{enumerate}
then for all $x,y \in \mathcal{U}$
\[
 \sum_{ \{u,v\} \in \mathcal{H}} \left|x_uA_{u,v}y_v\right|  \leq C\sqrt{d}.
\]
\end{lemma}

By Lemma~\ref{lem:regularity}, all these conditions hold with the desired probability, and hence the proof of Proposition \ref{prop:adj} is complete.
\end{pfofthm}

\begin{pfofthm}{Lemma~\ref{lem:discrepancy}}
  We will partition the summands into blocks where each term $x_u$ or $y_v$ has approximately the same magnitude.  Let $\gamma_i = 2^{i},$ $n^* = \lceil  \log_2\sqrt{n} \rceil$ and put
\begin{align*}
A_i &= \left\{ u~\big\vert~ \tfrac{\gamma_{i-1}}{\sqrt n} \leq |x_u| < \tfrac{\gamma_{i}}{\sqrt n} \right\},& & 0 \leq i \leq n^*. \\
B_i &= \left\{ u~ \big\vert~ \tfrac{\gamma_{i-1}}{\sqrt n} \leq |y_u| < \tfrac{\gamma_{i}}{\sqrt n} \right\},& & 0 \leq i \leq n^*.
\end{align*}
Let $\hat {\mathcal{H}}$ denote those pairs $(i,j)$ so that $\gamma_i\gamma_j \geq \sqrt{d}.$  The contribution of the absolute sum can, in these terms, be bounded by
\[
 \sum_{ (u,v) \in \mathcal{H}} \left|x_uA_{u,v}y_v\right|  \leq 
 \sum_{ (i,j) \in \hat{\mathcal{H}}} \frac{\gamma_i\gamma_j}{n} e(A_i,B_j). 
\]
%
In what follows, we will bound the contribution of the summands where $|A_i| \geq |B_j|.$  By symmetry, the contribution of the other summands will have the same bound.  The heavy couples will now be partitioned into $6$ classes $\{ \hat{\mathcal{H}}_i\}_{i=1}^6$ where their contribution is bounded in a different way.  Let $\hat{\mathcal{H}}_i \subseteq \hat{\mathcal{H}}$ be those pairs $(i,j)$ which satisfy the $i^{th}$ property from the following list but none of the prior properties:
\begin{enumerate}
  \item $|A_i| < d^{1/4}/100.$
  \item $\tfrac{e(A_i,B_j)}{\mu(A_i,B_j)} \leq c_1 \frac{\gamma_i\gamma_j}{\sqrt{d}}.$
  \item $\gamma_j > \tfrac14 \sqrt{d}\gamma_i.$
  \item $\log \tfrac{e(A_i,B_j)}{\mu(A_i,B_j)} > \frac{1}{2}\log \frac{n}{|A_i|}.$
  \item $\frac{n}{|A_i|} > \gamma_i^4.$
  \item $\frac{n}{|A_i|} \leq \gamma_i^4.$
\end{enumerate}

\subsubsection*{ Bounding the contribution of $\hat{\mathcal{H}}_1$ }
For these terms, we have that $e(A_i,B_j) \leq |A_i||B_j| \leq \frac{\sqrt{d}}{10000}.$  Hence
\[
\sum_{ (i,j) \in \hat{\mathcal{H}}_1} \frac{\gamma_i\gamma_j}{n} e(A_i,B_j)
\leq
\sum_{ i,j=0}^{n^*} \frac{\gamma_i\gamma_j}{n}\frac{\sqrt{d}}{10000}
\leq \frac{16\sqrt{d}}{10000},
\]
where in the last line we have used that $\sum_{i=0}^{ n^*} 2^i \leq 4\sqrt{n}.$

\subsubsection*{ Bounding the contribution of $\hat{\mathcal{H}}_2$} 
Applying the bound directly to the sum, we have that
\[
\sum_{ (i,j) \in \hat{\mathcal{H}}_2} \frac{\gamma_i\gamma_j}{n} e(A_i,B_j)
\leq c_1 \sum_{ (i,j) \in \hat{\mathcal{H}}_2} \frac{\gamma_i^2\gamma_j^2}{n\sqrt{d}} \mu(A_i,B_j)
= c_1\sqrt{d} \sum_{ (i,j) \in \hat{\mathcal{H}}_2} \frac{\gamma_i^2\gamma_j^2}{n}\frac{|A_i||B_j|}{n}.
\]
Further,
\[
  \sum_{i=0}^{n^*} \frac{\gamma_i^2|A_i|}{n}
  \leq 4\sum_{u=1}^n |x_u|^2 \leq 4,
\]
and the same bound holds for the sum over $|B_j|.$ Hence
\[
\sum_{ (i,j) \in \hat{\mathcal{H}}_2} \frac{\gamma_i\gamma_j}{n} e(A_i,B_j)
\leq c_1\sqrt{d} \sum_{i,j=0}^{n^*} \frac{\gamma_i^2\gamma_j^2}{n}\frac{|A_i||B_j|}{n} = 16c_1\sqrt{d}.
\]

\subsubsection*{ Bounding the contribution of $\hat{\mathcal{H}}_3$.}
By the bound on the degrees, we have that $e(A_i,B_j) \leq C_1 |B_j| d.$  Hence
\[
\sum_{ (i,j) \in \hat{\mathcal{H}}_3} \frac{\gamma_i\gamma_j}{n} e(A_i,B_j)
\leq C_1d \sum_{ (i,j) \in \hat{\mathcal{H}}_3} \frac{\gamma_i\gamma_j}{n} |B_j|.
\]
Since $\gamma_i < 4\gamma_j/\sqrt{d},$ upon summing over all possible $i$, we get that for fixed $j$
\[
  \sum_{i :(i,j) \in \hat{\mathcal{H}}_3} \gamma_i \leq \frac{8\gamma_j}{\sqrt{d}}.
\]
Therefore,
\[
\sum_{ (i,j) \in \hat{\mathcal{H}}_3} \frac{\gamma_i\gamma_j}{n} e(A_i,B_j)
\leq C_1\sqrt{d} \sum_{j=0}^{n^*} \frac{8\gamma_j^2}{n} |B_j|
\leq 32C_1\sqrt{d}.
\]

\subsubsection*{ Bounding the contribution of $\hat{\mathcal{H}}_4$.}

As we are not in $\hat{\mathcal{H}}_1$ or $\hat{\mathcal{H}}_2,$ it must be that $(i,j) \in \hat{\mathcal{H}}_4$ satisfy the second discrepancy condition, that is
\[
  \tfrac{1}{2}
  e(A_i,B_j)\log \tfrac{n}{|A_i|} \leq
  e(A_i,B_j)\log \tfrac{e(A_i,B_j)}{\mu(A_i,B_j)} \leq 
  \csubtwo |A_i|\log \tfrac{n}{|A_i|}.
\]
Hence, applying this bound and summing over all $j$ so that $\gamma_j \leq \frac{1}{4}\sqrt{d} \gamma_i,$
\[
\sum_{ (i,j) \in \hat{\mathcal{H}}_4} \frac{\gamma_i\gamma_j}{n} e(A_i,B_j)
\leq
{\csubtwo} \sqrt{d}
\sum_{ i=0}^{n*} {\gamma_i^2} \frac{|A_i|}{n}
\leq 4\csubtwo \sqrt{d}.
\]

\subsubsection*{ Bounding the contribution of $\hat{\mathcal{H}}_5$.}

For $(i,j) \in \hat{\mathcal{H}}_5$ we have
\[
  e(A_i,B_j)
  \leq \mu(A_i,B_j) \left(\tfrac{n}{|A_i|}\right)^{1/2}
  = d{|B_j|} \left(\tfrac{n}{|A_i|}\right)^{-1/2}
  \leq d{|B_j|} \gamma_i^{-2} 
\]
Hence,
\[
\sum_{ (i,j) \in \hat{\mathcal{H}}_5} \frac{\gamma_i\gamma_j}{n} e(A_i,B_j)
\leq
\sum_{ (i,j) \in \hat{\mathcal{H}}_5} \frac{d \gamma_j^2|B_j|}{n\gamma_i\gamma_j} 
\leq
\frac{2}{\sqrt{d}}
\sum_{j=0}^{n^*}
 \frac{d \gamma_j^2|B_j|}{n}
\leq 8\sqrt{d},
\]
where we have used in the penultimate bound that the sum over $i$ is dominated by the series
\[
  \sum_{i : \sqrt{d} \leq \gamma_j\gamma_i} \frac{1}{\gamma_i} \leq \frac{2\gamma_j}{\sqrt{d}}.
\]

\subsubsection*{ Bounding the contribution of $\hat{\mathcal{H}}_6$.}

For $(i,j) \in \hat{\mathcal{H}}_6,$ we have that
\[
  e(A_i,B_j)\log \tfrac{c_1\gamma_i\gamma_j}{\sqrt{d}} \leq
  e(A_i,B_j)\log \tfrac{e(A_i,B_j)}{\mu(A_i,B_j)} \leq 
  \csubtwo |A_i|\log \tfrac{n}{|A_i|} \leq
  4\csubtwo |A_i| \log \gamma_i
\]
This brings us to the bound
\[
\sum_{ (i,j) \in \hat{\mathcal{H}}_6} \frac{\gamma_i\gamma_j}{n} e(A_i,B_j)
\leq
{4c_2}
\cdot
\sum_{ (i,j) \in \hat{\mathcal{H}}_6} 
\frac{\gamma_i|A_i|\log \gamma_i }{n}\frac{\gamma_j}{\log(c_1\gamma_i\gamma_j) - \log \sqrt{d}}.
\]
The sum in $j$ only runs over those terms such that $4\gamma_j \leq \sqrt{d}\gamma_i$ and such that $\gamma_j\gamma_i \geq \sqrt{d}.$  For $j$ such that $\gamma_j \leq \gamma_i\sqrt{d}/(1+\log(\gamma_i))$ we bound the sum over $j$ by
\[
  \sum_{j} \frac{\gamma_j}{\log(c_1\gamma_i\gamma_j) - \log \sqrt{d}}
  \leq
  \sum_{j} \frac{\gamma_j}{\log c_1}
  \leq  \frac{
 2\gamma_i\sqrt{d} 
  }{
 (\log c_1)(1+\log \gamma_i)
 }.
\]
For larger $j$, we bound the sum by
\[
  \sum_{j} \frac{\gamma_j}{\log(c_1\gamma_i\gamma_j) - \log \sqrt{d}}
  \leq
  \sum_{j} \frac{\gamma_j}{\log c_1\gamma_i^2 -\log(1+\log \gamma_i)}
  \leq  \frac{
 \gamma_i\sqrt{d} 
  }{
 2(\log c_1)(\log \gamma_i)
 },
\]
having applied the inequality $\log(1+x)\leq x.$
Hence, we conclude that 
\[
\sum_{ (i,j) \in \hat{\mathcal{H}}_6} \frac{\gamma_i\gamma_j}{n} e(A_i,B_j)
\leq
\frac{10 c_2 \sqrt{d}}{\log c_1}
\cdot
\sum_{ (i,j) \in \hat{\mathcal{H}}_6} 
\frac{\gamma_i^2|A_i| }{n}
\leq \frac{40 c_2 \sqrt{d}}{\log c_1}.
\]
%
%
\end{pfofthm}

%
%

%


\begin{appendices}
\section{Estimates of Binomial Random Variables}
%
%
%
\begin{lemma} \label{magic}
Let $X$ be a binomial random variable with mean $\mu$. Then for any $t \leq \mu$
\[
\prob \left[
X \leq t
\right] \leq \exp\left[
-\mu + t( 1+ \log \tfrac{\mu}{t})
\right],
\]
\end{lemma}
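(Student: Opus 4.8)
The plan is to apply the exponential Chernoff bound to the lower tail of a binomial. Write $X\sim\Binom(n,p)$, so that $\mu=np$, and assume for now that $0<t\le\mu$; the degenerate case $t=0$ is handled separately, since $\Pr[X\le 0]=(1-p)^n\le e^{-\mu}$ and the right-hand side $t(1+\log\tfrac{\mu}{t})\to 0$ as $t\downarrow 0$, so the claimed inequality reads $e^{-\mu}\le e^{-\mu}$. For any $\theta\ge 0$, applying Markov's inequality to the nonnegative random variable $e^{-\theta X}$ gives
\[
\Pr[X\le t]=\Pr\big[e^{-\theta X}\ge e^{-\theta t}\big]\le e^{\theta t}\,\mathbb{E}\big[e^{-\theta X}\big].
\]

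Next I would compute the moment generating function exactly and bound it. Writing $X$ as a sum of $n$ independent $\Bernoulli(p)$ variables and using $1-x\le e^{-x}$,
\[
\mathbb{E}\big[e^{-\theta X}\big]=\big(1-p+pe^{-\theta}\big)^n=\big(1-p(1-e^{-\theta})\big)^n\le\exp\!\big(-np(1-e^{-\theta})\big)=\exp\!\big(-\mu(1-e^{-\theta})\big).
\]
Combining the two displays, $\Pr[X\le t]\le\exp\!\big(\theta t-\mu(1-e^{-\theta})\big)$ for every $\theta\ge 0$.

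Finally I would optimize over $\theta$. The exponent $g(\theta)=\theta t-\mu(1-e^{-\theta})$ is concave with $g'(\theta)=t-\mu e^{-\theta}$, which vanishes at $\theta^\star=\log(\mu/t)$; since $t\le\mu$ we have $\theta^\star\ge 0$, so this is an admissible choice and the global minimizer over $\theta\ge 0$. Substituting $e^{-\theta^\star}=t/\mu$,
\[
g(\theta^\star)=t\log\tfrac{\mu}{t}-\mu\Big(1-\tfrac{t}{\mu}\Big)=-\mu+t\Big(1+\log\tfrac{\mu}{t}\Big),
\]
which is exactly the asserted bound. I do not expect any genuine obstacle: the computation is the classical binomial lower-tail Chernoff estimate, and the only points meriting a word of care are the degenerate endpoint $t=0$ and the verification that the optimal parameter $\theta^\star$ is nonnegative, both of which follow immediately from the hypothesis $t\le\mu$.
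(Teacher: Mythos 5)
Your proof is correct and is essentially the paper's own argument: bound the Laplace transform by $\exp[\mu(e^{\lambda}-1)]$, apply Markov's inequality to the lower tail, and plug in the optimizing parameter $\lambda=\log(t/\mu)$ (your $\theta=-\lambda$ is just a sign convention). One immaterial slip: $g(\theta)=\theta t-\mu(1-e^{-\theta})$ is convex, not concave (its second derivative is $\mu e^{-\theta}>0$), but since any admissible $\theta\ge 0$ yields a valid bound, optimality is not actually needed and the substitution stands.
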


\begin{pfofthm}{Lemma~\ref{magic}}
The proof follows from a standard estimate on the Laplace transform combined with Markov's inequality.  For any $\lambda \in \R,$ the Laplace transform of $X \sim \operatorname{Binomial}(n,p)$ can be bounded by
\begin{align*}
\expect e^{\lambda X}
&= \left(pe^{\lambda} + (1-p)\right)^n \\
&= \left(1 + p(e^{\lambda}-1)\right)^n \\
&\leq \exp\left[\mu(e^{\lambda}-1)\right].
\end{align*}
Provided that $\lambda < 0,$ the tail bound now can be bounded by Markov's inequality by
\begin{align*}
\prob \left[
X \leq t
\right] &=
\prob \left[
e^{\lambda X} \geq e^{\lambda t}
\right] \\
&\leq \left[\expect e^{\lambda X}\right] e^{-\lambda t} \\
&\leq \exp\left[\mu(e^{\lambda}-1) - \lambda t\right].
\end{align*}
Assuming that $t < \mu,$ this bound holds with $\lambda=\log (t/\mu),$ which upon evaluation gives
\[
\prob \left[
X \leq t
\right]
\leq \exp\left[\mu(e^{\log (t/\mu)}-1) - \log (t/\mu) t\right]
= \exp \left[ -\mu + t(1 + \log \tfrac{\mu}{t}) \right].
\]
\end{pfofthm}

\begin{lemma} \label{maybe}
Let $X$ be a binomial random variable with mean $\mu$. Then for any $t>4$
\[
\prob \left[
X \geq t \mu
\right] \leq \exp\left[
-\frac{t \mu \log(t)}{3}
\right],
\]
\end{lemma}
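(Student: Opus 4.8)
The plan is to mimic the proof of Lemma~\ref{magic} just given: apply the exponential Markov inequality to $e^{\lambda X}$ with a well-chosen positive $\lambda$, and then absorb the resulting rate into the crude form $\tfrac13 t\mu\log t$ using one elementary inequality valid for $t > 4$.

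First I would record the same Laplace-transform bound used above. Writing $X \sim \Binom(n,p)$ with $\mu = np$, for every $\lambda > 0$,
\[
\expect e^{\lambda X} = \left(1 + p(e^{\lambda} - 1)\right)^n \leq \exp\left[\mu(e^{\lambda} - 1)\right].
\]
By Markov's inequality applied to the nonnegative variable $e^{\lambda X}$,
\[
\prob\left[X \geq t\mu\right] = \prob\left[e^{\lambda X} \geq e^{\lambda t \mu}\right] \leq \exp\left[\mu(e^{\lambda}-1) - \lambda t \mu\right].
\]
The exponent $\mu(e^{\lambda}-1) - \lambda t\mu$ is minimized over $\lambda > 0$ at $\lambda = \log t$, which is a legitimate choice since $t > 4 > 1$. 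Substituting gives
\[
\prob\left[X \geq t\mu\right] \leq \exp\left[\mu(t-1) - \mu t\log t\right] = \exp\left[-\mu\left(t\log t - t + 1\right)\right].
\]

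It then remains to check the numerical inequality $t\log t - t + 1 \geq \tfrac13 t\log t$ for all $t > 4$, equivalently $\tfrac23 \log t \geq 1 - \tfrac1t$. The function $g(t) = \tfrac23\log t - 1 + \tfrac1t$ has derivative $g'(t) = \tfrac{2t-3}{3t^2} > 0$ for $t > \tfrac32$, so $g$ is increasing on $[4,\infty)$; since $g(4) = \tfrac23\log 4 - \tfrac34 > 0$, we get $g(t) > 0$ throughout, which is the claimed inequality. Plugging this into the previous display yields $\prob[X \geq t\mu] \leq \exp\left[-\tfrac13 \mu t\log t\right]$, as desired.

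There is no real obstacle here; the only point requiring care is the constant bookkeeping. The hypothesis $t > 4$ is used precisely so that the crude factor $\tfrac13$ can swallow the $-t + 1$ correction term — a sharper statement would retain the full rate $t\log t - t + 1$ — and this threshold is exactly what is needed when the lemma is invoked elsewhere (for instance in the proof of Lemma~\ref{lem:regularity}, where $\csubzero > 4$).
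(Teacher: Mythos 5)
Your proposal is correct and follows essentially the same route as the paper: the Chernoff/Markov bound on $e^{\lambda X}$ with $\lambda = \log t$, followed by the elementary inequality $t - 1 \leq \tfrac{2}{3}\,t\log t$ for $t \geq 4$, which you verify (equivalently, via monotonicity of $g(t) = \tfrac23\log t - 1 + \tfrac1t$ and the same numerical check $\log 4 > \tfrac98$ at $t=4$). No gaps.
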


\begin{pfofthm}{Lemma \ref{maybe}}
The proof here is identical in approach to the proof of Lemma~\ref{magic}. As there, it is possible to bound the Laplace transform of $X$ as
\[
\expect e^{\lambda X} \leq
\exp\left[\mu(e^{\lambda}-1)\right],
\]
for any real $\lambda.$  For $\lambda > 0,$ the tail bound follows from Markov's inequality by
\begin{align*}
\prob \left[
X \geq t\mu
\right] &=
\prob \left[
e^{\lambda X} \geq e^{\lambda t\mu}
\right] \\
&\leq \left[\expect e^{\lambda X}\right] e^{-\lambda t\mu} \\
&\leq \exp\left[\mu(e^{\lambda}-1) - \lambda t\mu \right].
\end{align*}
For $t > 1,$ it is possible to take $\lambda = \log t.$  This gives the bound on the tail probability
\[
\prob \left[
X \geq t\mu
\right]
\leq\exp\left[\mu\left(t - 1  -  t \log t\right)\right].
\]
To complete the proof, it remains to show that $t-1 \leq \tfrac{2}{3}t\log t$ when $t \geq 4.$ The function $\frac{t}{t-1}\log t$ is monotonically increasing for $t > 1,$ and thus it suffices to show that $\frac{4}{3}\log 4 \geq \tfrac{3}{2},$ or equivalently that $\log 4 \geq \tfrac{9}{8}.$  This follows from $\log 4 = \int_1^4 \tfrac 1 x dx$ and bounding the integral from below by a right Riemann sum.
\end{pfofthm}

\end{appendices}

\bibliographystyle{plain}
\bibliography{Trefs}

\end{document}